\newcommand{\C}{\ensuremath{\mathbb{C}}}%
\newcommand{\Z}{\mathbb{Z}}
\newcommand{\Gr}{\mathcal{G}}
\newcommand{\Hr}{\mathcal{H}}
\newcommand{\alb}{\mathsf{X}}
\newcommand{\xo}{\alb^\omega}
\newcommand{\xs}{\alb^*}
\newcommand{\tail}{\mathcal{T}}
\newcommand{\paths}{\Omega}
\newcommand{\symm}{\mathop{\mathrm{Symm}}}
\newcommand{\supp}{\mathop{\mathrm{supp}}}
\newcommand{\aut}{\mathop{\mathrm{Aut}}}
\newcommand{\X}{\mathcal{X}}
\newcommand{\be}{\mathsf{o}}
\newcommand{\en}{\mathsf{t}}
\newcommand{\arr}{\longrightarrow}
\newtheorem{theorem}{Theorem}[section]
\newtheorem{proposition}[theorem]{Proposition}
\newtheorem{lemma}[theorem]{Lemma}
\theoremstyle{definition}
\newtheorem{defi}[theorem]{Definition}
\newtheorem{examp}[theorem]{Example}
\newtheorem*{remark}{Remark}
\title{Extensions of amenable groups by recurrent groupoids}
\author{Kate Juschenko, Volodymyr Nekrashevych, Mikael de la Salle}
\begin{document}
\thanks{V. Nekrashevych was supported by NSF grant DMS1006280. M.~de la Salle was supported by ANR grants OSQPI and NEUMANN}
\begin{abstract}
We show that the amenability of a group acting by homeomorphisms can be deduced from a certain local property of the action and recurrency of the orbital Schreier graphs. This applies to a wide class of groups, the amenability of which was an open problem, as well as unifies many known examples to one general proof. In particular, this includes Grigorchuk's group, Basilica group, the full topological group of Cantor minimal system, groups acting on rooted trees by bounded automorphisms, groups generated by finite automata of linear activity growth, groups that naturally appear in holomorphic dynamics.

\end{abstract}
\maketitle

\section{Introduction}

M.~Day introduced the class EG of \emph{elementary amenable groups}
in~\cite{day:semigroups} as the class of all groups that can be
constructed from finite and abelian using operations of passing to a
subgroup, quotient, group extensions, and direct limits (the fact that
the class of amenable groups is closed under these operations was
already proved by J.~von~Neumann). He notes that
at that time no examples of amenable groups that do not belong to the
class EG were known.

The first example of an amenable group not belonging to EG was the
Grigorchuk group of intermediate
growth~\cite{grigorchuk:milnor_en}. An example of a group which can
not be constructed from groups of sub-exponential growth (which, in
some sense, can be also considered as an ``easy case'' of amenability)
is the basilica group introduced
in~\cite{zukgrigorchuk:3st}. Its amenability was
proved in~\cite{barthvirag} using asymptotic properties of random
walks on groups. These methods were then generalized
in~\cite{bkn:amenability} and~\cite{amirangelvirag:linear} for a big class
of groups acting on rooted trees.
The first examples of finitely generated infinite simple amenable
groups (which also can not belong to EG) were constructed
in~\cite{juschenkomonod}.

A common feature of all known examples of non-elementary amenable
groups is that they are defined as groups of homeomorphisms of the
Cantor set (or constructed from such groups).

The aim of this paper is to show a general method of proving
amenability for a wide class of groups acting on topological
spaces. This class contains many new examples of groups, whose
amenability was an open question. It also contains all of the
mentioned above non-elementary amenable groups as simple examples.

We show that amenability of a group of homeomorphisms can be deduced
from a combination of local topological information about the
homeomorphisms and global information about the orbital Schreier
graphs. Namely, we prove the following amenability condition (see
Theorem~\ref{th:amenhomeo}). If $G$ is a group acting by
homeomorphisms on a topological space $\X$, then
by $[[G]]$ we denote the full topological group of the action, i.e.,
the group of all homeomorphisms $h$ of $\X$ such that
for every $x\in\X$ there exists a neighborhood of $x$ such that
restriction of $h$ to that neighborhood is equal to restriction of an
element of $G$. For $x\in\X$ the \emph{group of germs} of $G$ at $x$ is the
quotient of the stabilizer of $x$ by the subgroup of elements acting
trivially on a neighborhood of $x$.

\begin{theorem}
\label{th:amenhomeo1}
Let $G$ and $H$ be groups of homeomorphisms of
a compact topological space $\X$, and $G$ is finitely generated.
Suppose that the following conditions hold.
\begin{enumerate}
\item The full group $[[H]]$ is amenable.
\item For every element $g\in G$, the set of points $x \in X$ such that $g$ does not coincide with an element of $H$ on any neighborhood of $x$ is finite.
\item For every point $x\in\X$ the Schreier graph of the action of $G$
  on the orbit of $x$ is recurrent.
\item For every $x\in\X$ the group of germs of $G$ at $x$ is amenable.
\end{enumerate}
Then the group $G$ is amenable. Moreover, the group $[[G]]$ is amenable.
\end{theorem}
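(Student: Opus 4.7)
The plan is to exhibit $[[G]]$ as an extension of an amenable quotient by an amenable normal subgroup, and then deduce amenability of $G \subset [[G]]$. Amenability of the quotient comes from condition (1) via a natural map to $[[H]]$, while amenability of the kernel encodes the interaction between conditions (2), (3), and (4).

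First I would use condition (2) to build a ``defect'' map $\pi \colon [[G]] \to [[H]]$. Applied to the generators of $G$ and then iterated along words, condition (2) implies that every $f \in [[G]]$ agrees locally with an element of $[[H]]$ outside a finite subset $D(f) \subset \X$. Modifying $f$ on $D(f)$ then yields a well-defined element $\pi(f) \in [[H]]$. The kernel $N = \ker \pi$ consists of elements of $[[G]]$ that are locally in $[[H]]$ outside a finite set, together with possibly non-trivial germ data at the points of that finite set.

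Second, I would prove that $N$ is amenable. The group $N$ fibres over the space of $G$-orbits: each element is supported on a single orbit, and on that orbit it belongs to a group assembled from finitely supported permutations of the orbit together with the germ groups attached at each point of the finite support. The germ groups are amenable by condition (4). For the finitely supported permutation part, the recurrency in condition (3) is decisive: recurrence of a bounded-degree graph is equivalent to the existence, for each finite vertex set $F$, of enlargements $F_n \supset F$ with $|\partial F_n| = o(|F_n|)$. Such $F_n$ furnish F\o lner sets for the finitely supported permutation piece of $N$ — because each generator of $G$ moves points only along Schreier-graph edges — and combined diagonally with F\o lner sequences for the germ groups at the points of $F_n$, this yields F\o lner sequences for $N$.

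Finally, since $[[H]]$ is amenable by condition (1) and $N$ is amenable, the extension $[[G]]$ is amenable and hence so is its subgroup $G$. The main obstacle I anticipate is precisely the translation from graph recurrency into genuine group-theoretic F\o lner sets for $N$: one must organise the finitely supported permutations together with the attached germ data into a single group whose F\o lner sequences can be constructed coherently and with controlled boundary under both the permutation action and the germ twisting. This F\o lner-from-recurrence step, formulated in groupoid language, is the technical heart of the argument and the content the paper's title (``recurrent groupoids'') alludes to.
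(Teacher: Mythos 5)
Your proposal hinges on a ``defect'' homomorphism $\pi\colon [[G]]\to [[H]]$ obtained by ``modifying $f$ on $D(f)$,'' and this map does not exist. On a perfect space (the relevant examples are Cantor sets) you cannot alter a homeomorphism at finitely many points and remain a homeomorphism, so the modification would have to happen on neighborhoods of $D(f)$; but there need be no element of $[[H]]$ agreeing with $f$ near a singular point $x$, because condition (2) only says each $G$-orbit is a union of \emph{finitely many} $H$-orbits --- $x$ and $f(x)$ may lie in different $H$-orbits, so no germ of $[[H]]$ can replace the germ of $f$ at $x$. Even when a replacement exists it is non-canonical, so $\pi$ is neither well defined nor multiplicative. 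Consequently $[[G]]$ is not an extension of a subgroup of $[[H]]$ by a normal ``finitely supported'' subgroup $N$, and the whole extension strategy collapses. (Relatedly, an element whose germs lie in $\Hr$ off a finite set need not be supported on a single $G$-orbit, and the group you describe as $N$ --- finitely supported permutations decorated by germ data --- would be amenable for free, being locally finite extended by a restricted product of amenable groups; so your argument also puts the recurrence hypothesis in a place where it is not needed.)

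The second gap is the claim that recurrence of a bounded-degree graph is equivalent to finding enlargements $F_n\supset F$ with $|\partial F_n|=o(|F_n|)$. That is the F\o lner condition, i.e.\ amenability of the graph, which is strictly weaker than recurrence ($\mathbb{Z}^3$ is amenable but transient). The actual role of recurrence here, and the technical heart of the paper, is Theorem~\ref{prop=equiv_sobolev}: recurrence of the action of $G$ on $X$ is equivalent to amenability of the action of $\Z_2\wr_X G$ on $\bigoplus_X\Z_2$, witnessed by an invariant mean giving full weight to the collection of finite sets containing a fixed point. The paper then avoids any quotient map entirely: it builds an auxiliary $G$-set $\mathcal P$ of finitely supported functions on the singular orbits with values in coset spaces $\Gr_a/\Hr_a$, uses the wreath-product mean together with F\o lner sets of the amenable isotropy groups to produce a $G$-invariant mean on $\mathcal P$, and then shows the stabilizers $G_\phi$ are amenable (here is where amenability of $[[\Hr]]$ enters, via the fixator of $\supp\phi$ mapping to a product of germ groups with kernel in $[[\Hr]]$); amenability of $G$ follows from Proposition~\ref{pr:amenableactionandstabilizer}. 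Your F\o lner sets in $X$ cannot substitute for this: one needs F\o lner sets in the power-set-like space $\mathcal{P}_f(X)$, invariant under both translation and symmetric difference, and that is exactly what recurrence (and not mere graph amenability) provides.
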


A key tool of the proof of the theorem is the following fact
(see Theorem~\ref{prop=equiv_sobolev}).

\begin{theorem}
Let $G$ be a finitely generated group acting on a set
$X$. If the graph of the action of $G$ on $X$ is recurrent, then there
exists an $(\Z/2\Z)\wr_X G$-invariant mean on $\bigoplus_X\Z/2\Z$.
\end{theorem}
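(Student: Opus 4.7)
The plan is to produce a sequence of finitely supported probability measures $\mu_n$ on $\bigoplus_X \Z/2\Z$ satisfying the Reiter condition for the action of $(\Z/2\Z)\wr_X G$: $\|g\mu_n - \mu_n\|_1 \to 0$ for every $g$ in a fixed finite generating set. Any weak-$*$ cluster point in the space of means is then an invariant mean. After decomposing $X$ into $G$-orbits I may work on a single orbit $Y$ with base point $x_0$, since $\bigoplus_X \Z/2\Z = \bigoplus_{\text{orbits } Y} \bigoplus_Y \Z/2\Z$ and the wreath product acts orbit by orbit. The generators to test are $S$ together with the single lamp $\delta_{x_0}$, because $g\delta_{x_0}g^{-1} = \delta_{gx_0}$ recovers every lamp $\delta_y$ for $y\in Y$.

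The construction exploits the simple random walk on the recurrent Schreier graph $\Gamma_Y$. Let $W_0=x_0,W_1,\dots,W_n$ be an SRW on $\Gamma_Y$ and let $\epsilon_0,\dots,\epsilon_n$ be i.i.d.\ fair Bernoulli variables independent of the walk, and define
\[
\xi_n \;=\; \sum_{k=0}^{n}\epsilon_k\,\delta_{W_k}\; \in\; \bigoplus_Y\Z/2\Z,
\]
with $\mu_n$ its law. Conditionally on the trajectory, $\xi_n$ is uniformly distributed over the $2^{|R_n|}$ subsets of the range $R_n = \{W_0,\dots,W_n\}$, so that conditional law is invariant under toggling any coordinate $y \in R_n$. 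This yields $\|\delta_y \mu_n - \mu_n\|_1 \leq 2\Pr[y \notin R_n]$, which vanishes exactly for $y = x_0$ and tends to $0$ for every other $y \in Y$ by recurrence of the SRW.

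The hard part is $G$-invariance, $\|s\mu_n - \mu_n\|_1 \to 0$ for $s \in S$. A direct coupling $W_k \mapsto sW_k$ fails: $(sW_k)$ is not an SRW from $sx_0$ but has its step distribution on the conjugated generators $sSs^{-1}$, and the laws of $R_n$ and $sR_n$ differ by a positive amount in total variation. My strategy is to invoke the Sobolev-type characterization of recurrence: for every $\varepsilon > 0$ there exists a finitely supported $\varphi:Y\to [0,1]$ with $\varphi(x_0)=1$ and Dirichlet energy $\sum_{\{y,z\}\in E(\Gamma_Y)}(\varphi(y)-\varphi(z))^2 < \varepsilon$. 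Using such $\varphi$ one modifies the random configuration (for instance through Bernoulli sampling with probabilities derived from $\varphi^2$, or through a weighted/killed version of the walk) and bounds $\|s\mu_n - \mu_n\|_1$ in terms of the Dirichlet energy of $\varphi$ by a Cauchy--Schwarz argument exploiting the $\Z/2\Z$-linearity of the wreath product action. Letting $\varepsilon \to 0$ along a diagonal sequence then yields the desired Reiter sequence. The principal obstacle of the proof is precisely this last step: converting the $L^2$-small gradient on $Y$ supplied by recurrence into an $L^1$-small total-variation bound on the configuration space $\bigoplus_Y \Z/2\Z$, with no Folner property available for the $G$-action on $Y$ itself. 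The crucial input is the $\Z/2\Z$-linearity of the action combined with the amenability of the abelian group $\bigoplus_Y \Z/2\Z$, which together allow a quadratic energy bound on $Y$ to yield a linear total-variation bound for the wreath product action.
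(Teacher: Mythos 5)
Your proposal has a genuine gap, and it sits exactly where you say it does: the $G$-invariance step is not carried out, and the entire content of the theorem lives there. Your random-walk-range construction ($\xi_n=\sum_k\epsilon_k\delta_{W_k}$, conditionally uniform over subsets of the range) does give the lamp-toggle invariance cleanly from recurrence, but as you note it is not $G$-almost-invariant, and the proposed repair --- replacing it by something built from a finitely supported $\varphi$ with small Dirichlet energy --- is only gestured at. Worse, the two requirements pull against each other: any finitely supported product-type measure (Bernoulli sampling from $\varphi^2$, say) has $\Pr[y\in E]=0$ for all but finitely many $y$, so toggling the lamp at such a $y$ moves the measure by total variation $1$. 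You would lose the lamp invariance you gained from the range construction, and you give no mechanism for keeping both simultaneously. So the "Cauchy--Schwarz argument exploiting $\Z/2\Z$-linearity" is not a proof sketch; it is a restatement of the problem.

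The paper resolves precisely this tension by not working in $\ell^1$ of $\bigoplus_X\Z_2$ at all. It passes to the Pontryagin dual: one builds p.i.r.\ unit vectors $f_n=\bigotimes_x \xi_{1-a_{x,n}}$ in $L_2(\Z_2^X,\mu)$, where $(a_{x,n})$ are the small-capacity functions from the recurrence characterization (Theorem~\ref{thm=woess}) with $a_{x_0,n}=1$, and where $\xi_t=(\sqrt2\cos(t\pi/4),\sqrt2\sin(t\pi/4))$. The product structure turns the $G$-invariance estimate into
$\langle gf_n,f_n\rangle=\prod_x\cos\frac{\pi}{4}(a_{x,n}-a_{gx,n})\ge e^{-\frac{\pi^2}{16}\sum_{x\sim x'}|a_{x,n}-a_{x',n}|^2}\to 1$,
i.e.\ the Dirichlet energy controls the $L^2$ defect directly, while $a_{x_0,n}=1$ forces $\|f_n\chi_{A_p}\|_2\to1$. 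The passage from these two $L^2$ conditions to an invariant mean for the \emph{full} wreath product is then Lemma~\ref{duality} (Juschenko--Monod), which is a genuine additional ingredient your argument is missing: it is what makes the lamp-toggle invariance come for free once you have $G$-almost-invariance plus concentration on the cylinder $A_p$. (Your Bernoulli-sampling idea is in fact the measure-theoretic shadow of this: the vectors $\xi_t$ are square roots of Bernoulli densities and the product of cosines is a Hellinger affinity; but without the duality lemma the Hellinger bound only gives $G$-invariance of the product measure, not invariance under the lamps.)
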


The last two sections of the paper are devoted to showing different
examples of applications of Theorem~\ref{th:amenhomeo1}. At first we
consider the case when the group $[[H]]$ is locally finite. Examples
of such groups $[[H]]$ are given by \emph{block-diagonal direct limits} of
symmetric groups defined by a \emph{Bratteli diagram}. The
corresponding groups $G$ satisfying the conditions of
Theorem~\ref{th:amenhomeo1} are generated by \emph{homeomorphisms of
  bounded type}. 
  
 The class of groups generated by homeomorphisms of bounded type
includes some known examples of non-elementary amenable groups (groups
of bounded automata~\cite{bkn:amenability} and topological full group
of minimal homeomorphisms of the Cantor set~\cite{juschenkomonod}), as
well as groups whose amenability was an open question (for instance groups of
arbitrary bounded automorphisms of rooted trees).

The last section describes some examples of application of
Theorem~\ref{th:amenhomeo1} in the case when $H$ is not locally
finite. For example, one can use Theorem~\ref{th:amenhomeo1} twice:
prove amenability of $[[H]]$ using it, and then construct new amenable
groups $G$ using $H$. This way one gets a simple proof of
the main result of~\cite{amirangelvirag:linear}: that groups generated
by finite automata of linear activity growth are amenable. Two other
examples from Section~\ref{s:unbounded} are new, and are groups
naturally appearing in holomorphic dynamics. One is a holonomy group
of the stable foliation of the Julia set of a H\'ennon map, the other is
the iterated monodromy group of a mating of two quadratic
polynomials.

\subsection*{Acknowledgments} We thank Omer Angel and Laurent Bartholdi for useful comments on a previous version of this paper.

\section{Amenable and recurrent $G$-sets}

\subsection{Amenable actions}

\begin{defi}
Let $G$ be a discrete group. An action of $G$ on a set $X$ is said to be
\emph{amenable} if there exists an \emph{invariant mean} on $X$. Here
an invariant mean is a map $\mu$ from the set of all subsets of $X$ to
$[0, 1]$ such that $\mu$ is finitely additive, $\mu(X)=1$, and
$\mu(g(A))=\mu(A)$ for all $A\subset X$ and $g\in G$.
\end{defi}

A group $G$ is amenable if and only if its action on itself by left
multiplication is amenable.
Note that the above definition of amenability is different from another definition of
amenability of an action, due to Zimmer, see~\cite{zimmer:ergodictheory}.

The following criteria of amenability are proved in the same way as
the corresponding classical criteria of amenability of groups.

\begin{theorem}
\label{th:amenabcriteria}
Let $G$ be a discrete group acting on a set $X$. Then the following conditions
are equivalent.
\begin{enumerate}
\item The action of $G$ on $X$ is amenable.
\item \textbf{Reiter's condition.} For every finite subset $S\subset
  G$ and for every $\epsilon>0$ there exists a non-negative function
  $\phi\in\ell^1(X)$ such that $\|\phi\|_1=1$ and $\|g\cdot\phi-\phi\|_1<\epsilon$ for
  all $g\in S$.
\item \textbf{F\o lner's condition.} For every finite subset $S\subset
  G$ and for every $\epsilon>0$ there exists a finite subset $F\subset
  X$ such that $\sum_{g\in S}|gF\Delta F|<\epsilon |F|$.
\end{enumerate}
\end{theorem}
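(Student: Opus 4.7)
The plan is to prove the cycle $(3) \Rightarrow (2) \Rightarrow (1) \Rightarrow (2) \Rightarrow (3)$, following the classical amenability arguments adapted from groups to $G$-sets. The key point throughout is that the definitions only refer to measures/functions on $X$, so the constructions of Day, Reiter, and Følner transfer essentially verbatim once one replaces $G$ by $X$ as the underlying set.

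The two easy implications are $(3) \Rightarrow (2)$ and $(2) \Rightarrow (1)$. For the first, given a Følner set $F \subset X$ for $(S,\epsilon)$, I would take $\phi = \mathbf{1}_F/|F|$; then $\|g\cdot\phi-\phi\|_1 = |gF \Delta F|/|F|$, so summing over $g \in S$ gives the Reiter estimate. For $(2)\Rightarrow(1)$, choose sequences $S_n \uparrow G$ and $\epsilon_n \to 0$, take corresponding $\phi_n \in \ell^1(X)$ viewed as means (probability measures on $X$), and extract a weak-$*$ accumulation point $\mu$ in the compact set of means on $X \subset \ell^\infty(X)^*$; the Reiter estimate forces $\mu(g(A)) = \mu(A)$ for all $g,A$.

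The main nontrivial implication is $(1)\Rightarrow(2)$, which is the classical weak-to-norm argument. Given an invariant mean $\mu$, one approximates it in the weak-$*$ topology by finitely supported probability measures $\phi_\alpha \in \ell^1(X)$; invariance of $\mu$ translates, via the duality $\ell^\infty(X) = \ell^1(X)^{**}$, into $g\cdot\phi_\alpha - \phi_\alpha \to 0$ weakly in $\ell^1(X)$ for each $g \in G$. Considering the vector-valued map $\phi \mapsto (g\cdot\phi - \phi)_{g \in S} \in \ell^1(X)^{S}$, one has $0$ in the weak closure of a convex set of its image; by Mazur's theorem the weak closure equals the norm closure of convex combinations, yielding a non-negative $\phi \in \ell^1(X)$ with $\|\phi\|_1 = 1$ and $\|g\cdot\phi-\phi\|_1 < \epsilon$ for all $g \in S$. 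This is the step I expect to require the most care, since it is the only place where a genuine functional-analytic compactness/convexity argument is invoked.

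Finally, for $(2)\Rightarrow(3)$ I would use the level-set (layer cake) trick. Writing a non-negative $\phi \in \ell^1(X)$ with $\|\phi\|_1=1$ as $\phi = \int_0^\infty \mathbf{1}_{F_t}\,dt$ with $F_t = \{x : \phi(x) > t\}$, one gets the pointwise identity $|(g\cdot\phi-\phi)(x)| = \int_0^\infty |\mathbf{1}_{gF_t}(x) - \mathbf{1}_{F_t}(x)|\,dt$, and summing over $x$ gives
\[
\sum_{g\in S} \|g\cdot\phi-\phi\|_1 = \int_0^\infty \sum_{g \in S} |gF_t \Delta F_t|\,dt, \qquad \|\phi\|_1 = \int_0^\infty |F_t|\,dt.
\]
If the left side is less than $\epsilon \|\phi\|_1$, an averaging argument produces some $t > 0$ for which $\sum_{g \in S} |gF_t \Delta F_t| < \epsilon |F_t|$, and $F = F_t$ is the desired Følner set. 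No step beyond $(1)\Rightarrow(2)$ poses real difficulty, so the Mazur/Hahn-Banach passage is the single substantive obstacle.
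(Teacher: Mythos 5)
Your outline is correct and is exactly what the paper intends: the paper gives no proof of this theorem, stating only that it ``is proved in the same way as the corresponding classical criteria of amenability of groups,'' and your cycle (F\o lner $\Rightarrow$ Reiter via normalized indicators, Reiter $\Rightarrow$ mean via weak-$*$ compactness, mean $\Rightarrow$ Reiter via Day's convexity/Mazur argument, Reiter $\Rightarrow$ F\o lner via the layer-cake decomposition) is precisely that classical chain transplanted to a $G$-set, where it goes through because $G$ still acts by bijections of $X$. The only cosmetic caveat is that for uncountable $G$ the extraction of the invariant mean in $(2)\Rightarrow(1)$ should use a net indexed by pairs $(S,\epsilon)$ rather than a sequence $S_n\uparrow G$.
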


Note that if $G$ is finitely generated, it is enough to check
conditions (2) and (3) for one generating set $S$. If the conditions
of (3) hold, then we say that $F$ is $(S, \epsilon)$-F\o lner set, and
that $F$ is $\epsilon$-invariant with respect to the elements of $S$.

The following proposition is well known and follows from the fact that
a group is amenable if and only if it admits an action which is
amenable and Zimmer amenable, see \cite{rose}.

\begin{proposition}
\label{pr:amenableactionandstabilizer}
Let $G$ be a group acting on a set $X$. If the action is amenable and
for every $x\in X$ the stabilizer $G_x$ of $x$ in $G$ is an amenable
group, then the group $G$ is amenable.
\end{proposition}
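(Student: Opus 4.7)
My plan is a direct Reiter-type argument. The goal is, given a finite symmetric $S\subset G$ and $\epsilon>0$, to produce a non-negative $\Phi\in\ell^1(G)$ with $\|\Phi\|_1=1$ and $\|s\Phi-\Phi\|_1<2\epsilon$ for all $s\in S$. The starting point is to apply Reiter's condition (Theorem~\ref{th:amenabcriteria}) to the amenable action of $G$ on $X$ to obtain an almost-invariant non-negative $\phi\in\ell^1(X)$; by truncation I may assume $\phi$ has finite support $F$. Picking one basepoint $o$ in each $G$-orbit that meets $F$ and a section $\sigma$ with $\sigma(x)\cdot o=x$, I obtain the cocycle $\alpha(s,x)=\sigma(sx)^{-1}s\sigma(x)$ which takes values in the stabilizer $G_o$, together with a bijection $g\mapsto(g\cdot o,\,\sigma(g\cdot o)^{-1}g)$ that identifies $G$, orbit by orbit, with $(G\cdot o)\times G_o$.

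The next step is to use the amenability of each relevant stabilizer. For each basepoint $o$, let $T_o=\{\alpha(s,x):s\in S,\ x\in F\cap G\cdot o\}$, a finite subset of the amenable group $G_o$; invoking Reiter inside $G_o$, I can find a non-negative $\psi_o\in\ell^1(G_o)$ with $\|\psi_o\|_1=1$ that is $\epsilon$-invariant under $T_o$. I would then define
\[
\Phi(g)=\phi(g\cdot o)\,\psi_o\bigl(\sigma(g\cdot o)^{-1}g\bigr),
\]
where $o$ is the basepoint of the orbit of $g\cdot o$. The product factorization of $G$ over each orbit gives $\|\Phi\|_1=\|\phi\|_1=1$.

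Finally, to estimate $\|s\Phi-\Phi\|_1$, I would write any $g$ as $\sigma(x)h$ with $x=g\cdot o$ and $h\in G_o$, and use the cocycle identity to rewrite $\Phi(s^{-1}g)=\phi(s^{-1}x)\psi_o(\alpha(s^{-1},x)h)$. The triangle inequality
\[
\bigl|\Phi(s^{-1}g)-\Phi(g)\bigr|\le \bigl|\phi(s^{-1}x)-\phi(x)\bigr|\,\psi_o(\alpha(s^{-1},x)h)+\phi(x)\,\bigl|\psi_o(\alpha(s^{-1},x)h)-\psi_o(h)\bigr|
\]
summed via the product structure yields
\[
\|s\Phi-\Phi\|_1\le \|s\phi-\phi\|_1+\sum_{x\in F}\phi(x)\,\|\alpha(s^{-1},x)\psi_o-\psi_o\|_1<2\epsilon.
\]
The main point to watch is the bookkeeping across orbits: one needs the cocycle values arising from $\phi$ to form a \emph{finite} set in each stabilizer, so that a single $\psi_o$ can be chosen within each amenable $G_o$. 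Finiteness of $F$ (together with finiteness of $S$) is what guarantees this, so every piece fits together, and since $S,\epsilon$ were arbitrary, Reiter's condition for $G$ is verified and $G$ is amenable.
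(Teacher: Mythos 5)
Your argument is correct, but it takes a genuinely different route from the paper: the paper gives no direct proof, instead invoking the general fact that a group is amenable if and only if it admits an action that is both amenable (in the invariant-mean sense) and amenable in the sense of Zimmer, and citing the literature for that equivalence (an action on a discrete set with amenable stabilizers is Zimmer-amenable). Your proof is a self-contained Reiter-type argument: you lift an almost-invariant $\ell^1$-function on $X$ to one on $G$ by tensoring, fibrewise over each orbit, with almost-invariant functions on the stabilizers, and you use the cocycle $\alpha(s,x)=\sigma(sx)^{-1}s\sigma(x)\in G_o$ to control how left translation on $G$ moves the fibre coordinate. This buys elementarity and makes visible exactly where the hypotheses enter: finiteness of $S$ and of the support $F$ is what makes each set $T_o$ of cocycle values finite, so that a single $\psi_o$ works inside each amenable $G_o$; the price is the bookkeeping you mention. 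One imprecision to fix: each $g\in G$ lies in \emph{every} product decomposition $(G\cdot o)\times G_o$ (one per orbit), so the phrase ``where $o$ is the basepoint of the orbit of $g\cdot o$'' does not select an orbit; the definition should be $\Phi(g)=\sum_{o}\phi(g\cdot o)\,\psi_o(\sigma_o(g\cdot o)^{-1}g)$, summing over the finitely many chosen basepoints. With that reading your computation $\|\Phi\|_1=\|\phi\|_1=1$ and the final estimate $\|s\Phi-\Phi\|_1\le\|s\phi-\phi\|_1+\sum_{x\in F}\phi(x)\,\epsilon<2\epsilon$ go through orbit by orbit exactly as stated.
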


\subsection{Recurrent actions}

Let $G$ be a finitely generated group acting transitively on a set $X$.
Choose a measure $\mu$ on $G$ such that support of $\mu$ is a finite
generating set of $G$ and $\mu(g)=\mu(g^{-1})$ for all $g\in
G$. Consider then the Markov chain on $X$ with transition probability
from $x$ to $y$ equal to $p(x, y)=\sum_{g\in G, g(x)=y}\mu(g)$.

The Markov chain is called \emph{recurrent} if the probability of ever
returning to $x_0$ after starting at $x_0$ is equal to 1 for some
(and hence for every) $x_0\in X$.

It is well known (see~\cite[Theorems~3.1,
3.2]{woess:rw}) that recurrence of the described Markov chain does not
depend on the choice of the measure $\mu$, if the measure is
symmetric, and has finite support generating the group. We say that the \emph{action} of
$G$ on $X$ is \emph{recurrent} if the corresponding Markov chain is
recurrent. Every recurrent action is amenable, see \cite[Theorem~10.6]{woess:rw}.

For a finite symmetric generating set $S$ the \emph{Schreier graph} $\Gamma(X, G, S)$ is
the graph with the set of vertices identified with $X$, the set of
edges is $S\times X$, where an edge $(s, x)$ connects $x$ to $s(x)$.

A transitive action of $G$ on $X$ is recurrent if and only if the
simple random walk on the Schreier graph $\Gamma(X, G, S)$ is recurrent.

The following fact is also a corollary
of~\cite[Theorem~3.2]{woess:rw}.

\begin{lemma}
\label{lem:recurrentamenablesubgroup}
If the action of a finitely generated group $G$ on a set $X$ is
recurrent, and $H<G$ is a subgroup, then the action of $H$ on every
its orbit on $X$ is also recurrent.
\end{lemma}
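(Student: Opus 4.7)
The plan is to invoke the Nash--Williams criterion for recurrence of a reversible Markov chain (equivalently, Rayleigh's monotonicity principle for effective resistance in electrical networks); this is the spirit of Woess's Theorem~3.2. Intuitively, the Schreier graph of $H$ on an $H$-orbit $Y$ is a ``sparser'' network than the Schreier graph of $G$ on the ambient orbit $\mathrm{Orb}_G(x_0)$, so passing from $G$ to $H$ should only increase effective resistance and therefore preserve recurrence.

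First I would reduce to the case where $H$ is itself finitely generated, by observing that if the action on $Y$ of every finitely generated subgroup $K \le H$ is recurrent on each of its orbits, then the same conclusion holds for $H$. Fixing finite symmetric generating sets $S$ of $G$ and $T$ of $H$, I would write each $h \in T$ as a word $h = s_{h,1} \cdots s_{h, \ell_h}$ in $S$, and set $L = \max_{h \in T} \ell_h$. Each edge $\{y, h(y)\}$ of $\Gamma(Y, H, T)$ is then naturally represented by a path of length $\ell_h \le L$ in $\Gamma(\mathrm{Orb}_G(x_0), G, S)$ from $y$ to $h(y)$, and a simple congestion bound shows that each $G$-edge is used by only $O(|T|L)$ such paths.

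Now I would apply Nash--Williams. The hypothesis provides pairwise edge-disjoint finite cutsets $(\Pi_n)_n$ in $\Gamma(\mathrm{Orb}_G(x_0), G, S)$ separating $x_0$ from infinity with $\sum_n 1/|\Pi_n| = \infty$. For each $n$, let $A_n$ be the finite side of $\Pi_n$ containing $x_0$, and let $\Pi_n'$ be the set of edges of $\Gamma(Y, H, T)$ joining $A_n \cap Y$ to $Y \setminus A_n$; these separate $x_0$ from infinity in $\Gamma(Y, H, T)$. The path replacement forces every edge of $\Pi_n'$ to induce a $G$-path that crosses $\Pi_n$, so the congestion bound gives $|\Pi_n'| \le C |\Pi_n|$ for a constant $C$ depending only on $|T|$ and $L$. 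The main technical obstacle is that the $\Pi_n'$ need not be pairwise edge-disjoint, because a single $G$-path may cross several of the $\Pi_n$; however, since each such path has length at most $L$, any given $H$-edge belongs to at most $L$ of the $\Pi_n'$, and a standard regrouping argument extracts a subsequence $(n_k)$ with pairwise disjoint $\Pi_{n_k}'$ along which $\sum_k 1/|\Pi_{n_k}'|$ still diverges. Nash--Williams then yields recurrence of $\Gamma(Y, H, T)$.
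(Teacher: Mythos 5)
There is a genuine gap at the launching point of your main argument. You write that ``the hypothesis provides pairwise edge-disjoint finite cutsets $(\Pi_n)_n$ \dots with $\sum_n 1/|\Pi_n|=\infty$.'' The Nash--Williams criterion (Theorem~\ref{th:nashwilliams}) is a \emph{sufficient} condition for recurrence, not a characterization: a recurrent graph of bounded degree need not admit any sequence of disjoint cutsets with divergent sum of reciprocal sizes, so recurrence of $\Gamma(\mathrm{Orb}_G(x_0),G,S)$ does not hand you the cutsets your argument starts from. Everything downstream depends on this unjustified (and in general false) step. A secondary, more repairable weakness is the ``standard regrouping argument'': with multiplicities bounded by $L$ it is not clear one can extract a \emph{disjoint} subsequence keeping the sum divergent (discarding the indices conflicting with a chosen cutset may discard most of the divergence); the correct fix there is the bounded-multiplicity form of Nash--Williams, obtained by Cauchy--Schwarz against a unit flow, which loses only a factor $L$. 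Finally, note that the paper only defines recurrence for finitely generated groups, so your reduction to finitely generated $H$ needs the lemma (or the definition) to be read accordingly.

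Your underlying idea --- represent each $T$-edge by an $S$-path of length at most $L$ and control congestion --- is exactly the right one, and it is essentially the content of the comparison theorem of Woess that the paper invokes; you have simply attached it to the wrong characterization of recurrence. Run the same computation against Theorem~\ref{thm=woess} instead, which \emph{is} an equivalence: given a finitely supported $a$ on the $G$-orbit with $a(x_0)=1$ and $\sum_{(x,x')\in E}|a(x)-a(x')|^2<\epsilon$, restrict $a$ to the $H$-orbit $Y$ of $x_0$; for $h\in T$ and $y\in Y$, writing $h=s_{h,1}\cdots s_{h,\ell_h}$ and telescoping along the path gives $|a(y)-a(h(y))|^2\le L\sum_i|a(u_i)-a(u_{i+1})|^2$ by Cauchy--Schwarz, and summing over $y$ and $h$, each $G$-edge is used at most $|T|L$ times, so the Dirichlet sum of $a|_Y$ over the edges of $\Gamma(Y,H,T)$ is at most $|T|L^2\epsilon$. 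Hence $\mathop{cap}(x_0)=0$ in the $H$-Schreier graph and the action of $H$ on $Y$ is recurrent. This replaces your cutset machinery by a three-line energy estimate and closes the gap.
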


The following theorem is a straightforward corollary of the Nash-Williams criterion
(see~\cite{nashwilliams} or~\cite[Corollary~2.20]{woess:rw}).

\begin{theorem}
\label{th:nashwilliams}
Let $\Gamma$ be a connected graph of uniformly bounded degree with set
of vertices $V$. Suppose that there exists an increasing sequence of finite subsets $F_n\subset V$
such that $\bigcup_{n\ge 1}F_n=V$, $\partial F_n$ are disjoint subsets and
\[\sum_{n\ge 1}\frac{1}{|\partial F_n|}=\infty,\]
where $\partial F_n$ is the set of vertices of $F_n$ adjacent to the
vertices of $V\setminus F_n$. Then the simple random walk on $\Gamma$
is recurrent.
\end{theorem}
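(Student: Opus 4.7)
The plan is to reduce the statement to the edge-version of the Nash-Williams criterion that is cited in the excerpt, namely: if $\Pi_1, \Pi_2, \dots$ are pairwise disjoint finite sets of edges each of which separates a fixed base vertex $v_0$ from infinity, then the effective resistance satisfies $R_{\mathrm{eff}}(v_0 \leftrightarrow \infty) \geq \sum_n |\Pi_n|^{-1}$, and simple random walk on $\Gamma$ is recurrent if and only if $R_{\mathrm{eff}}(v_0 \leftrightarrow \infty) = \infty$. Both of these are in the same reference the theorem quotes, so the work is to build good edge cutsets out of the vertex boundaries $\partial F_n$.

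First I would set $\Pi_n$ to be the edge boundary of $F_n$, i.e., the set of edges with exactly one endpoint in $F_n$; shifting the sequence if necessary, pick $v_0 \in F_1$, so that each $\Pi_n$ is finite (as $F_n$ is) and separates $v_0$ from infinity (as $\bigcup_n F_n = V$). The remaining two points are (a) the $\Pi_n$ are pairwise disjoint, and (b) there is a uniform bound $|\Pi_n| \leq d \cdot |\partial F_n|$, where $d$ is the uniform degree bound on $\Gamma$. Point (b) is immediate, since each edge of $\Pi_n$ has its endpoint inside $F_n$ in $\partial F_n$, and each vertex of $\partial F_n$ is incident to at most $d$ edges. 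For (a), suppose an edge $\{u,v\}$ with $u \in F_n$, $v \notin F_n$ also lies in $\Pi_m$ for some $m > n$; by monotonicity $u \in F_n \subset F_m$, so we must have $v \notin F_m$, and then $u$ belongs to both $\partial F_n$ and $\partial F_m$, contradicting the assumed disjointness of the vertex boundaries.

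Combining (a), (b), and the hypothesis yields
\[
\sum_n \frac{1}{|\Pi_n|} \;\geq\; \frac{1}{d}\sum_n \frac{1}{|\partial F_n|} \;=\; \infty,
\]
so by Nash-Williams $R_{\mathrm{eff}}(v_0 \leftrightarrow \infty) = \infty$, and the simple random walk is recurrent. The only delicate point in the argument is the passage from disjoint vertex boundaries to disjoint edge cutsets, but the monotonicity $F_n \subset F_m$ makes this free; everything else is a direct invocation of the classical electrical-network formulation of recurrence, consistent with the paper's description of the statement as a ``straightforward corollary'' of the Nash-Williams criterion.
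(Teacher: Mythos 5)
Your proposal is correct and is precisely the ``straightforward'' derivation the paper has in mind: the paper gives no proof, deferring to the edge-cutset Nash--Williams criterion in \cite[Corollary~2.20]{woess:rw}, and your reduction (disjoint vertex boundaries plus monotonicity of the $F_n$ give disjoint edge cutsets $\Pi_n$ with $|\Pi_n|\le d\,|\partial F_n|$) is exactly the missing bookkeeping. No gaps.
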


We will also use a characterization of transience of a random walk on
a locally finite connected graph $(V, E)$ in terms of electrical
network. The \emph{capacity} of a point $x_0 \in V$ is the quantity
defined by
\[\mathop{cap}(x_0) = \inf\left\{\left(\sum_{(x,x') \in E} |a(x) - a(x')|^2\right)^{1/2}\right\}\]
where the infimum is taken over all finitely supported functions $a:V
\to \C$ with $a(x_0)=1$. We will use the following

\begin{theorem}[\cite{woess:rw}, Theorem 2.12]\label{thm=woess} The
random walk on a locally finite connected graph $(V,E)$ is transient
if and only if $\mathop{cap}(x_0)>0$ for some (and hence for all)
$x_0 \in V$.
\end{theorem}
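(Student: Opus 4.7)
The plan is to identify $\mathop{cap}(x_0)^2$ with the effective conductance from $x_0$ to infinity in the electrical network where every edge carries unit conductance, and then invoke the classical electrical interpretation of the escape probability of the simple random walk.

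First, I would exhaust $V$ by an increasing sequence of finite connected sets $F_n \ni x_0$ with $\bigcup_n F_n = V$. Let $C_n$ denote the effective conductance between $x_0$ and $V\setminus F_n$ in the subnetwork supported on $F_n$. The finite Dirichlet principle identifies
\[C_n = \min \sum_{(x,x')\in E}|a(x)-a(x')|^2,\]
where the minimum is taken over real functions $a$ with $a(x_0)=1$ vanishing outside $F_n$; the unique minimizer $a_n$ is harmonic on $F_n\setminus\{x_0\}$. Since every admissible $a$ in the definition of $\mathop{cap}(x_0)$ has support inside some $F_n$ and since $C_n$ is monotonically non-increasing, comparing the two variational problems yields $\mathop{cap}(x_0)^2 = \lim_n C_n$. (Complex competitors reduce to real-valued ones because the Dirichlet energy splits as the sum of the energies of the real and imaginary parts.)

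Second, I would use the probabilistic meaning of $a_n$: its value at $x$ is the probability that a simple random walk started at $x$ reaches $x_0$ before escaping $F_n$. A one-step decomposition at $x_0$ together with the discrete Green's identity gives
\[\mathbb{P}_{x_0}(\sigma_n < \tau_0^+) = \frac{C_n}{\deg(x_0)},\]
where $\tau_0^+$ is the first return time to $x_0$ and $\sigma_n$ is the hitting time of $V\setminus F_n$. Monotone convergence as $n\to\infty$ sends the left-hand side to the escape probability $\mathbb{P}_{x_0}(\tau_0^+=\infty)$, so this probability equals $\mathop{cap}(x_0)^2/\deg(x_0)$. Since transience is by definition the positivity of the escape probability, this settles the equivalence at $x_0$. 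The "some iff all" clause is the standard consequence of irreducibility of the walk on the connected graph $(V,E)$; equivalently, adjoining a finite path to the network changes both the capacity and the recurrence/transience status only in a trivial way.

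The main obstacle will be justifying the infinite-network Dirichlet principle in the first step. The inequality $\mathop{cap}(x_0)^2 \le C_n$ is immediate since each $a_n$ has finite support. For the reverse inequality, any competitor $a$ in the definition of $\mathop{cap}(x_0)$ sits in some $F_n$ and hence bounds $C_n$ from above by its energy; infimising over $a$ bounds $\lim_n C_n$ from above by $\mathop{cap}(x_0)^2$. The subtlety worth flagging is that in the transient case the infimum defining $\mathop{cap}(x_0)$ is typically not attained by a finitely supported function; this is precisely why the definition uses $\inf$ and why the electrical current generating the conductance genuinely escapes to infinity.
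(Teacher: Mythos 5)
The paper states this result without proof, citing Theorem~2.12 of Woess's book, and your argument is correct and is essentially the standard proof found in that reference: identify $\mathop{cap}(x_0)^2$ with the decreasing limit of the effective conductances $C_n$ from $x_0$ to the complements of an exhaustion, and identify $C_n/\deg(x_0)$ with the probability of exiting $F_n$ before returning to $x_0$. All the steps you outline --- the reduction of complex to real competitors, the two inequalities giving $\mathop{cap}(x_0)^2=\lim_n C_n$, the Green-identity computation of $C_n$ at $x_0$, and the monotone passage to the escape probability --- are sound.
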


\subsection{A mean on $\bigoplus_X\Z_2$ invariant with respect to
  $\Z_2\wr_X G$}

Let $G$ be a discrete group acting transitively on a set $X$. Let
$\{0,1\}^X$ be the set of all subsets of $X$ considered as abelian
group with multiplication given by the symmetric difference of
sets. It is naturally isomorphic to the Cartesian product $\Z_2^X$,
where $\Z_2=\Z/2\Z$. The group $G$ acts naturally on the group
$\Z_2^X$ by automorphisms.

Denote by $\mathcal{P}_f(X)$ the subgroup of $\Z_2^X=\{0,1\}^X$ which
consists of all finite subsets of $X$, i.e., the subgroup
$\bigoplus\limits_X \Z_2$ of $\Z_2^X$. It is obviously invariant under
the action of $G$.

Consider the restricted wreath product $\Z_2\wr_X G\cong
G\ltimes\mathcal{P}_f(X)$. Its action on $\mathcal{P}_f(X)$ is given
by the formula $$(g, E)(F)=g(E\Delta F)$$ for $E, F\in
\mathcal{P}_f(X)$ and $g\in G$.

The Pontryagin dual of $\mathcal{P}_f(X)$ is the compact group
$\Z_2^X$, with the duality given by the pairing
$\phi(E,\omega)=exp(i\pi \sum\limits_{j\in E}\omega_j)$, $E\in
\mathcal{P}_f(X)$, $\omega\in\Z_2^X$. Fix a point $p\in X$ and denote
by $L_2(\Z_2^X,\mu)$ the Hilbert space of functions on $\Z_2^X$ with
the Haar probability measure $\mu$. Denote by $A_p=\{(\omega_x)_{x\in
  X}\in \Z_2^X:\omega_{p}=0\}$ be the cylinder set which fixes
$\omega_p$ as zero.

The following lemma was proved in \cite[Lemma~3.1]{juschenkomonod}.

\begin{lemma}\label{duality} Let $G$ acts transitively on a set $X$
  and choose a point $p\in X$. The following are equivalent:
\begin{enumerate}[(i)]
\item There exist a net of unit vectors $\{f_n\}\in L_2(\{0,1\}^X,\mu)$ such that for every $g\in G$
$$\|g\cdot f_n-f_n\|_2\rightarrow 0  \text{ and } \|f_n\cdot \chi_{A_p}\|_2\rightarrow 1.$$ \label{functions}
\item The action of $\Z_2\wr_X G$ on $\mathcal{P}_f(X)$ is amenable.
\item The action of $G$ on $\mathcal{P}_f(X)$ admits an invariant mean
  giving full weight to the collection of sets containing $p$.
\end{enumerate}
\end{lemma}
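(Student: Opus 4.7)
My plan is to exploit Pontryagin duality between the compact group $\Z_2^X$ and its discrete dual $\mathcal{P}_f(X)$. Plancherel provides a unitary isomorphism $\Phi\colon L^2(\Z_2^X, \mu) \to \ell^2(\mathcal{P}_f(X))$ which intertwines the natural $G$-actions on both sides and sends multiplication by the character $\phi(F, \cdot)$ on $L^2$ to the translation operator $T_F\colon h \mapsto h(\cdot \Delta F)$ on $\ell^2$. Since $\chi_{A_p} = (1 + \phi(\{p\}, \cdot))/2$, multiplication by $\chi_{A_p}$ corresponds to the averaging operator $(I + T_{\{p\}})/2$.

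For (i) $\Leftrightarrow$ (ii), I would transfer the conditions across $\Phi$. The identity $\|(I + T)h/2\|_2^2 = 1 - \|(I - T)h\|_2^2/4$, valid for any unit vector $h$ and unitary $T$, shows that $\|\chi_{A_p} f_n\|_2 \to 1$ is equivalent to $\|\Phi(f_n) - T_{\{p\}} \Phi(f_n)\|_2 \to 0$. Combined with the $G$-almost-invariance of $\Phi(f_n)$ and the conjugation $T_{\{g(p)\}} = g\, T_{\{p\}}\, g^{-1}$, transitivity of $G$ propagates almost-invariance to every $T_{\{q\}}$ and, by the triangle inequality, to every $T_F$ for $F \in \mathcal{P}_f(X)$. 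This gives an $\ell^2$ Reiter sequence for the $\Z_2 \wr_X G$-action on $\mathcal{P}_f(X)$; squaring $|\Phi(f_n)|^2 \in \ell^1$ via the standard bound $\||a|^2 - |b|^2\|_1 \leq (\|a\|_2 + \|b\|_2)\|a - b\|_2$ produces the $\ell^1$ Reiter condition, equivalent by Theorem~\ref{th:amenabcriteria} to amenability of the action. The converse reverses these steps: take an $\ell^1$ Reiter sequence $\phi_n$, pass to $\sqrt{\phi_n} \in \ell^2$, and apply $\Phi^{-1}$.

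For (ii) $\Leftrightarrow$ (iii), both conditions describe means on $\mathcal{P}_f(X)$ but with different invariance groups and support profiles: translation invariance forces the mean in (ii) to assign mass $1/2$ to $\{E : p \in E\}$, while the mean in (iii) assigns full mass to this set under only $G$-invariance. I would argue (iii) $\Rightarrow$ (ii) by convolving the mean of (iii) with a translation-invariant mean on the abelian group $\mathcal{P}_f(X)$; the key is that $G$-transitivity promotes the support condition to $\nu(\{E : F \subseteq E\}) = 1$ for every finite $F \subset X$, which lets $G$-invariance survive the convolution. Conversely, (ii) $\Rightarrow$ (iii) consists in conditioning $\mu$ on $\{E : p \in E\}$ to obtain the desired support and then repairing the broken $G$-invariance via a weak-$*$ compactness argument in the convex set of $G$-translates of the conditional mean.

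The hardest step is the restoration of $G$-invariance in (ii) $\Rightarrow$ (iii): direct averaging over $G$ would require amenability of $G$ itself, which is not assumed. The repair must instead exploit the $\mathcal{P}_f(X)$-translation invariance already built into $\mu$, using it to absorb the orbital shift of $p$ by $g \in G$ into a compensating $\mathcal{P}_f(X)$-translation. This interplay between the two commuting symmetries of $\mu$ --- one internal (translation) and one external ($G$) --- is what makes the combined conditioning-then-symmetrization argument well-defined.
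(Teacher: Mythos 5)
This lemma is not proved in the paper; it is imported verbatim from \cite[Lemma~3.1]{juschenkomonod}, so your proposal has to be measured against that argument. Your treatment of (i) $\Leftrightarrow$ (ii) is correct and is the standard Plancherel argument: the identity $\|(I+T)h/2\|^2=1-\|(I-T)h\|^2/4$ for a unit vector $h$ and unitary $T$, the conjugation $gT_{\{p\}}g^{-1}=T_{\{g(p)\}}$ combined with transitivity, and the usual passage between $\ell^2$- and $\ell^1$-almost-invariance all work exactly as you describe.

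The gap is in (ii) $\Leftrightarrow$ (iii), in both directions. For (iii) $\Rightarrow$ (ii): writing $\lambda*\nu(A)=\lambda_F\bigl(\nu(\{E: F\Delta E\in A\})\bigr)$ with $\lambda$ translation-invariant, the result is indeed translation-invariant, but $(\lambda*\nu)(gA)=\lambda_F\bigl(\nu(\{E:g^{-1}F\Delta E\in A\})\bigr)$, and pushing the $g$ onto $\lambda$ requires $\lambda$ itself to be $G$-invariant --- that is, it requires a $\Z_2\wr_X G$-invariant mean, which is what you are constructing; convolving in the other order returns $\lambda$ and discards $\nu$. The support condition does not rescue this: it does not make $\nu$ even approximately invariant under $E\mapsto E\Delta\{q\}$ (this map reverses the parity of $|E|$, to which a mean satisfying the support condition may assign any value). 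A working construction is genuinely different, e.g.\ $m(A):=\nu_E\bigl(2^{-|E|}\,|\{D\subseteq E: D\in A\}|\bigr)$, a uniformly random subset of a $\nu$-random set: $G$-invariance is inherited because the recipe is $G$-equivariant, and invariance under $\Delta\{q\}$ holds because $D\mapsto D\Delta\{q\}$ permutes the subsets of $E$ whenever $q\in E$, which is where $\nu(\{E\ni q\})=1$ enters. For (ii) $\Rightarrow$ (iii): the conditioned mean $2m(\,\cdot\,\cap\{E\ni p\})$ has $G$-translates $2m(\,\cdot\,\cap\{E\ni q\})$, and there is no fixed-point theorem available to average them ($G$ is not assumed amenable, nor is its action on $X$); worse, any mean in the weak-$*$ closed convex hull of these translates assigns to $\{E\ni p\}$ a value built from $2m(\{E\ni p\}\cap\{E\ni q\})=1/2$ for $q\ne p$, so the full-weight condition is destroyed rather than repaired. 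Here too one needs a different idea, for instance pushing forward the iterated product mean $m^{\otimes k}$ under $(E_1,\dots,E_k)\mapsto E_1\cup\dots\cup E_k$, which preserves $G$-invariance and gives $\nu_k(\{E\ni p\})=1-2^{-k}$, whence a weak-$*$ cluster point satisfies (iii).
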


We say that a function $f\in L_2(\Z_2^X, \mu)$ is \emph{p.i.r.\ } if
it is a product of random independent variables, i.e., there are
functions $f_x\in\mathbb{R}^{\Z_2}$ such that
$f(\omega)=\prod\limits_{x\in X} f_x(\omega_x)$. In other words, if we
consider $L_2(\Z_2^X, \mu)$ as the infinite tensor power
of the Hilbert space $L_2(\Z_2, m)$ with unit vector $1$, where
$m(\{0\})=m(\{1\})=1/2$, the condition p.i.r.\ means that $f$ is an
elementary tensor in $L_2(\Z_2^X, \mu)$.

\begin{theorem}
\label{prop=equiv_sobolev}
Let $G$ be a finitely generated group acting transitively
on a set $X$. There exists a sequence of p.i.r.\ functions $\{f_n\}$
in $L_2(\Z_2^X, \mu)$ that satisfy condition (i) in Lemma~\ref{duality} if and only if
the action of $G$ on $X$ is recurrent.
\end{theorem}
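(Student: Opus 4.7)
My plan is to use the tensor-product identification $L_2(\Z_2^X,\mu)=\bigotimes_x L_2(\Z_2,m)$ to parametrize unit p.i.r.\ vectors by angles, translate condition~(i) of Lemma~\ref{duality} into a Dirichlet-energy statement on the Schreier graph, and then invoke the capacity characterization of recurrence from Theorem~\ref{thm=woess}.

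In the orthonormal basis $\{1,\chi\}$ of $L_2(\Z_2,m)$ with $\chi(\omega)=(-1)^\omega$, a unit p.i.r.\ function takes the form $f=\bigotimes_{x\in X}(\cos\theta_x\cdot 1+\sin\theta_x\cdot\chi)$ with $\theta_x\in(-\pi,\pi]$ and $\theta_x=0$ for all but finitely many $x$. A direct computation against the Bernoulli measure gives
\[\|f\cdot\chi_{A_p}\|_2^2=\tfrac12(1+\sin 2\theta_p),\qquad \langle gf,f\rangle=\prod_{x\in X}\cos(\theta_{g^{-1}x}-\theta_x).\]
Flipping individual tensor factors only changes $f$ by an overall sign, so I may normalize $\theta_x\in[-\pi/2,\pi/2]$; and once the sign of $f_p$ is fixed so that $\cos\theta_p\sin\theta_p>0$ (forced eventually by $\|f_n\chi_{A_p}\|_2\to 1$), replacing each $\theta_x$ by $|\theta_x|$ keeps $\theta_p$ unchanged and only enlarges each cosine above, by the reverse triangle inequality, so I may further assume $\theta_x\in[0,\pi/2]$.

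Under this normalization every cosine factor in the product lies in $[0,1]$, so the elementary estimate $\tfrac{2}{\pi^2}\alpha^2\leq 1-\cos\alpha\leq \tfrac12\alpha^2$ on $[-\pi/2,\pi/2]$ combined with $-\log(1-t)\geq t$ shows that condition~(i) for a sequence $f_n$ with angles $\theta_n$ is equivalent to $\theta_{n,p}\to\pi/4$ together with
\[\sum_{x\in X}(\theta_{n,g^{-1}x}-\theta_{n,x})^2\to 0\qquad\text{for every } g\in S,\]
where $S$ is a fixed finite symmetric generating set of $G$. Summed over $S$, this is exactly the statement that the Dirichlet energy of $\theta_n$ on the Schreier graph $\Gamma(X,G,S)$ tends to $0$ while $\theta_{n,p}$ stays bounded below. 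Rescaling $a_n:=\theta_n/\theta_{n,p}$ shows this is equivalent to $\operatorname{cap}(p)=0$, with the converse furnished by picking a near-optimizer of the capacity, truncating to $[0,1]$ (which only decreases the energy), and setting $\theta_n:=(\pi/4)a_n$. By Theorem~\ref{thm=woess} this is equivalent to recurrence of the simple random walk on $\Gamma(X,G,S)$, i.e.\ to recurrence of the action of $G$ on $X$.

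The main obstacle I foresee is the sign/absolute-value reduction. Without it, factors of $\prod_x\cos(\theta_{g^{-1}x}-\theta_x)$ can lie in $[-1,0)$, and a finite product of such numbers tending to $1$ does not force each factor toward $1$: two factors close to $-1$ multiply to nearly $+1$, so a product can tend to $1$ while individual factors stay near $-1$. Restricting to $\theta_x\in[0,\pi/2]$ is precisely what legitimizes the passage from the product of cosines to the sum of squared angle differences, and hence to the capacity.
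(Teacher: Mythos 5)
Your overall strategy --- parametrize unit p.i.r.\ vectors by angles, reduce condition (i) to a vanishing Dirichlet energy with the value at $p$ bounded away from $0$, and invoke the capacity criterion of Theorem~\ref{thm=woess} --- is exactly the paper's, and your forward direction (truncated near-optimizers of the capacity, angles proportional to them, the estimate $\cos\alpha\ge e^{-\alpha^2}$) is the paper's construction essentially verbatim. The one place where your write-up has a real gap is the absolute-value reduction in the converse direction, and it is a gap of precisely the kind you warn about in your final paragraph. You replace each $\theta_x$ by $|\theta_x|$ and justify this by the reverse triangle inequality, which gives $\cos\bigl(\,\bigl||\theta_{g^{-1}x}|-|\theta_x|\bigr|\,\bigr)\ge\cos(\theta_{g^{-1}x}-\theta_x)$, i.e.\ each new factor dominates the corresponding old \emph{signed} factor. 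But enlarging the factors of a product term by term does not enlarge the product when some factors are negative: from $q_x\ge c_x$ with $q_x\in[0,1]$ and $\prod_x c_x\to1$ one cannot conclude $\prod_x q_x\to1$ (take $c=(-0.99,-0.99)$, $q=(0,0)$). So, as written, the step from ``condition (i) holds for $f_n$'' to ``the summed squared increments of $|\theta_n|$ tend to $0$'' does not follow.

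The claim is nevertheless true and the fix is short: for $\theta_a,\theta_b\in[-\pi/2,\pi/2]$ one has the stronger inequality $\cos\bigl(\,\bigl||\theta_a|-|\theta_b|\bigr|\,\bigr)\ge\bigl|\cos(\theta_a-\theta_b)\bigr|$ (when the cosine is negative the angles have opposite signs, $|\theta_a-\theta_b|=|\theta_a|+|\theta_b|>\pi/2$, and one checks $\bigl||\theta_a|-|\theta_b|\bigr|\le\pi-|\theta_a|-|\theta_b|$). Since all factors satisfy $|c_x|\le1$, the convergence $\prod_x c_x\to1$ forces $\prod_x|c_x|\to1$, hence $\sum_x(1-|c_x|)\to0$, and then $1-q_x\le1-|c_x|$ gives what you need. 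This is in substance what the paper does, but packaged so that the sign issue never arises: it works with the projective pseudometric $d(\xi,\eta)=\sqrt{2-2|\langle\xi,\eta\rangle|}$, for which $|\langle gf_n,f_n\rangle|=\prod_x\bigl(1-d(f_{n,x},f_{n,gx})^2/2\bigr)$ has manifestly nonnegative factors, sets $a_{x,n}=d(f_{x,n},1)$, and uses the ordinary triangle inequality $|d(f_x,1)-d(f_{gx},1)|\le d(f_x,f_{gx})$ in place of your reverse triangle inequality on angles.
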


\begin{proof}
Denote by $(X,E)$ the Schreier graph of the action of $G$ on $X$ with respect to $S$.
Suppose that the random walk on $(X,E)$ is recurrent. By Theorem
\ref{thm=woess} there exists $a_n=(a_{x, n})_x$ a
sequence of finitely supported functions such that $a_{x_0, n}=1$ and
$\sum_{x \sim x'} |a_{x, n}-a_{x', n}|^2 \to 0$. Replacing all values
$a_{x, n}$ that are greater than 1 (smaller than $0$) by 1
(respectively $0$) does not increase the
differences $|a_{x, n}-a_{x', n}|$, hence we may assume that $0 \leq
a_{x, n}\leq 1$. For $0\leq t \leq 1$ consider the unit vector $\xi_t \in L_2(\{0,1\},m)$
\[(\xi_t(0),\xi_t(1)) = (\sqrt 2 \cos(t\pi/4),\sqrt 2\sin(t \pi/4
)).\]
Define $f_{x,n} =\xi_{1-a_{x,n}}$ and $f_n=\bigotimes_{x\in X}
f_{x,n}$. We have to show that $\langle g f_n,f_n\rangle \to 1$ for
all $g \in \Gamma$. It is sufficient to show this for $g \in S$. Then
\begin{multline*}
\langle g f_n,f_n\rangle=\prod_x \langle f_{x,n},f_{g
  x,n}\rangle  = \prod_x \cos \frac{\pi}{4}(a_{x,n} - a_{gx,n}) 
 \geq\\ \prod_x e^{-\frac{\pi^2}{16} (a_{x,n} - a_{gx,n})^2} 
\geq e^{-\frac{\pi^2}{16}\sum_{x \sim x'} |a_{x,n} - a_{x',n}|^2}
\end{multline*}
and the last value converges to $1$. We used that $\cos(x) \geq
e^{-x^2}$ if $|x|\leq \pi/4$.

Let us prove the other direction of the theorem.
Define the following pseudometric on the unit sphere of $L_2(\{0,1\},m)$
by $$d(\xi,\eta) = \inf\limits_{{\omega \in \mathbb C, |\omega|=1}} \|
\omega \xi - \eta\| = \sqrt{2-2|\langle \xi,\eta\rangle|}.$$

Assume that there is a sequence of  p.i.r.\ functions $\{f_n\}$ in
$L_2(\Z_2^X, \mu)$ that satisfy condition \eqref{functions} of
Lemma~\ref{duality}. Write $f_n(\omega) = \prod_{x \in X}
f_{n,x}(\omega_x)$. We can assume that the product is
finite. Replacing $f_{n,x}$ by $f_{n,x}/\|f_{n,x}\|$ we can assume
that $\|f_{n,x}\|_{L_2(\Z_2, m)}=1$. Define $a_{x,n} =
d(f_{x,n},1)$. It is straightforward that $(a_{x,n})_{x \in X}$ has
finite support and $$\lim\limits_n a_{x_0,n}= d(\sqrt 2
\delta_{0},1)>0.$$ Moreover for every $g \in G$
\[|\langle g f_n,f_n\rangle| = \prod_x |\langle f_{n,x},f_{n,g
  x}\rangle| = \prod_x (1-d(f_{n,x},f_{n,gx})^2/2) \leq e^{-\sum_x
  d(f_{n,x},f_{n,gx})^2/2},\]
which by assumption goes to $1$ for every $g \in S$. By definition of
the Schreier graph and the triangle inequality for $d$,
\begin{align*}
\sum_{(x, x')\in E} |a_{x,n} - a_{x',n}|^2 &= \sum_{g \in S} \sum_x |a_{x,n} - a_{g x,n}|^2 
\leq \sum_{g \in S} \sum_x d(f_x,f_{gx})^2 \to 0.
\end{align*}
This proves that $\mathop{cap}(x_0)=0$ in $(X,E)$, and hence by
Theorem \ref{thm=woess} that the random walk on $(X,E)$ is recurrent.
\end{proof}

\section{Amenability of groups of homeomorphisms}
\label{s:finext}

\subsection{Groupoids}
Let $G$ be a group acting faithfully by homeomorphisms on a
topological space $\X$. A \emph{germ} of the action is an equivalence
class of pairs $(g, x)\in G\times\X$, where two germs $(g_1, x_1)$ and
$(g_2, x_2)$ are equal if and only if $x_1=x_2$, and there exists a
neighborhood $U$ of $x_1$ such that $g_1|_U=g_2|_U$. The set of all germs of the action of $G$ on $\X$ is a
\emph{groupoid}. Denote by $\be(g, x)=x$ and $\en(g, x)=g(x)$ the
\emph{origin} and \emph{target} of the germ. A composition $(g_1,
x_1)(g_2, x_2)$ is defined if $g_2(x_2)=x_1$, and then it is equal to
$(g_1g_2, x_2)$. The inverse of a germ $(g, x)$ is the germ $(g,
x)^{-1}=(g^{-1}, g(x))$.

The groupoid of germs has a natural topology defined by the basis of
open sets of the form $\{(g, x)\;:\;x\in U\}$, where $g\in G$ and
$U\subset\X$ is open. For a given groupoid $\Gr$ of germs of an action on $\X$, and for
$x\in\X$, the \emph{isotropy group}, or \emph{group of germs} $\Gr_x$ is the group of all germs
$\gamma\in\Gr$ such that $\be(\gamma)=\en(\gamma)=x$. If $\Gr$ is the groupoid of germs of the action of $G$ on $\X$,
then the isotropy group $\Gr_x$ is the quotient of the
stabilizer $G_x$ of $x$ by the subgroup of elements of $G$ that act
trivially on a neighborhood of $x$.

The \emph{topological full group} of a groupoid of germs $\Gr$, denoted
$[[\Gr]]$ is the set of all homeomorphisms $F:\X\arr\X$ such that all
germs of $F$ belong to $\Gr$. The \emph{(orbital) Schreier graph} $\Gamma(x, G)$ is the Schreier
graph of the action of $G$ on the $G$-orbit of $x$.

\subsection{Amenability of groups}

\begin{theorem}
\label{th:amenhomeo}
Let $G$ be a finitely generated group of homeomorphisms of
a topological space $\X$, and $\Gr$ be its groupoid of germs. Let $\Hr$ be a
groupoid of germs of homeomorphisms of $\X$.
Suppose that the following conditions hold.
\begin{enumerate}
\item The group $[[\Hr]]$ is amenable.
\item For every generator $g$ of $G$ the set of points $x\in\X$ such that
$(g, x)\notin\Hr$ is finite. We say that $x\in\X$ is \emph{singular}
if there exists $g\in G$ such that $(g, x)\notin\Hr$.\label{C2}
\item For every singular point $x\in\X$
the orbital Schreier graph $\Gamma(x, G)$ is recurrent.
\item The isotropy groups $\Gr_x$ are amenable.
\end{enumerate}
Then the group $G$ is amenable.
\end{theorem}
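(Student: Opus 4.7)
I would prove this by constructing an amenable action of $G$ with amenable stabilizers and then invoking Proposition~\ref{pr:amenableactionandstabilizer}. Working one $G$-orbit $Y \subset \X$ at a time, I would first observe that by hypothesis~(2) and the groupoid structure of $\Hr$, the set of singular points is $G$-invariant, so each orbit is either entirely singular or entirely regular. For an orbit $Y$ with no singular points, all germs of $G$ along $Y$ lie in $\Hr$, and amenability of the action of $G$ on $Y$ should follow from hypothesis~(1): the effective action factors through $[[\Hr]]$, with the "ineffective" part absorbed into the isotropy groups $\Gr_y$ that are amenable by hypothesis~(4).

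The substance of the theorem concerns orbits $Y$ that meet the singular set. For such $Y$, hypothesis~(3) gives recurrence of the Schreier graph, so Theorem~\ref{prop=equiv_sobolev} combined with Lemma~\ref{duality} yields an invariant mean on $\mathcal{P}_f(Y)$, under the wreath-product action of $\Z_2 \wr_Y G$, charging full weight on subsets containing a distinguished point $p \in Y$. The core construction is then a $1$-cocycle $c\colon G \to \mathcal{P}_f(Y)$ recording, for each $g \in G$, the finite set of points of $Y$ where $g$ fails to coincide locally with a fixed $[[\Hr]]$-representative. Finiteness of $c(g)$ on generators comes from hypothesis~(2) and propagates to all of $G$ via the cocycle identity $c(gh)=c(g)\,\Delta\,g(c(h))$.

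The cocycle lets me twist the diagonal action of $G$ on $Y \times \mathcal{P}_f(Y)$ into $g\cdot(y,E)=(g(y),\, g(E)\,\Delta\, c(g))$, realizing $G$ inside $[[\Hr]]\ltimes \mathcal{P}_f(Y)$. Amenability of the wreath-product action of the previous step, together with hypothesis~(1), then yields amenability of this twisted $G$-action. The stabilizer of $(p,\emptyset)$ is an extension of a subgroup of the $p$-stabilizer in $[[\Hr]]$ (amenable by hypothesis~(1)) by the germ group $\Gr_p$ (amenable by hypothesis~(4)), hence amenable, and Proposition~\ref{pr:amenableactionandstabilizer} gives amenability of $G$.

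The delicate step, and the one I expect to be the main obstacle, is the construction of the cocycle $c$: one must choose, coherently in $g$, local elements of $[[\Hr]]$ approximating each $g \in G$ off its finite singular support, and verify that the resulting discrepancy satisfies the cocycle identity. Equally subtle is the identification of the stabilizer of the combined action as an extension of the amenable groups furnished by hypotheses~(1) and~(4); both steps rely on a careful use of the groupoid formalism developed at the beginning of Section~\ref{s:finext}.
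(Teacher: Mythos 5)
Your overall architecture is the right one and matches the paper's: use recurrence, via Theorem~\ref{prop=equiv_sobolev} and Lemma~\ref{duality}, to get an invariant mean on a space of finitely supported ``discrepancy'' data concentrated on sets containing the singular locus, then show stabilizers are amenable and invoke Proposition~\ref{pr:amenableactionandstabilizer}. But the step you yourself flag as the main obstacle --- the construction of a cocycle $c\colon G\to\mathcal{P}_f(Y)$ --- is not merely delicate; as described it fails. If $c(g)$ is the set $N(g)=\{y\in Y: (g,y)\notin\Hr\}$, then from $(gh,y)=(g,h(y))(h,y)$ one only gets $N(h)\,\Delta\,h^{-1}(N(g))\subseteq N(gh)\subseteq N(h)\cup h^{-1}(N(g))$, and the identity $c(gh)=c(h)\Delta h^{-1}(c(g))$ breaks on $N(h)\cap h^{-1}(N(g))$: for instance, if $h$ fixes a point $y$ with $(h,y)\notin\Hr$ and $(h^2,y)\notin\Hr$, then $y\in N(h^2)$ but $y\notin N(h)\Delta h^{-1}(N(h))$. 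The alternative reading, comparing $g$ with ``a fixed $[[\Hr]]$-representative,'' is also unavailable: an element of $G$ need not agree with any single element of $[[\Hr]]$ off a finite set, so there is no embedding of $G$ into $[[\Hr]]\ltimes\mathcal{P}_f(Y)$. The defect is precisely that a $\Z_2$-valued discrepancy forgets the germ at a singular point, and that germ lives in the quotient $\Gr_x/\Hr_x$. This is why the paper replaces $\mathcal{P}_f(V)$ by the space $\mathcal{P}$ of finitely supported functions $V\to\coprod_{a\in A}\Gr_a/\Hr_a$ over an $\Hr$-orbit transversal $A$, where the exact cocycle relation $(gg')_v=g_{g'(v)}g'_v$ holds at the level of germs $g_v=\delta_{g(v)}^{-1}(g,v)\delta_v$ and the action is genuinely well defined.

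This substitution also changes where the hypotheses enter, and your sketch misses both places. First, the invariant mean on $\mathcal{P}$ cannot be pulled back from the mean on $\mathcal{P}_f(V)$ alone: the paper builds F\o lner sets in $\mathcal{P}$ by combining F\o lner sets of $\mathcal{P}_f(V)$ (from recurrence) with F\o lner sets of the amenable isotropy quotient $\Gr_{x_0}/\Hr_{x_0}$, so hypothesis~(4) is essential already for amenability of the auxiliary action, not only for stabilizers. Second, the stabilizer analysis is not a two-step extension read off from one base point: the paper must first show, by an induction over the (arbitrary finite) support of $\phi$ using Lemma~\ref{lem:recurrentamenablesubgroup} and Proposition~\ref{pr:amenableactionandstabilizer}, that non-amenability of $G_\phi$ would descend to the pointwise fixator of the support, which then maps to a finite product of isotropy groups $\Gr_y$ with kernel inside $[[\Hr]]$ (via Lemma~\ref{lem:singular}, one must also work with the whole finite union of orbits $V$ rather than one orbit at a time, since vanishing of the discrepancy along a single orbit does not put an element into $[[\Hr]]$). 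Your identification of the stabilizer of $(p,\emptyset)$ as an extension of a subgroup of $[[\Hr]]$ by $\Gr_p$ does not survive these corrections. So the proposal has the correct skeleton but a genuine gap at its core, and the missing idea is exactly the passage from subsets of $Y$ to $\coprod_a\Gr_a/\Hr_a$-valued functions.
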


\begin{proof}
After replacing $\Hr$ by $\Hr\cap\Gr$, we may assume that $\Hr\subset\Gr$.
Let $S$ be a finite symmetric generating set of $G$.
Let $\Sigma$ be the set of points $x\in\X$ such that there exists
$g\in S$ such that $(g, x)\notin\Hr$. Let $V$ be the union of the $G$-orbits of the elements of $\Sigma$.
By assumption \ref{C2} every $G$-orbit is a union of a finite number of $\Hr$-orbits. Since
the set $\Sigma$ is finite, the set $V$ is a union of a finite number
of $\Hr$-orbits.

\begin{lemma}
\label{lem:singular}
The set $V$ contains all singular points of $\X$.
\end{lemma}

\begin{proof}
Let $g=g_1g_2\cdots g_n$ be a representation of $g$ as a product of
generators $g_i\in S$. The germ $(g, x)$ is then equal to the product
of germs
\begin{equation*}
\label{eq:germs}
(g_1, g_2\cdots g_n(x))\cdot (g_2, g_3\cdots g_n(x))\cdots (g_n,
x)
\end{equation*}
If all these germs belong to $\Hr$, then $(g, x)$ belongs to
$\Hr$. Therefore, $(g, x)\notin\Hr$ only if $x\in\Sigma\cup
g_n^{-1}\Sigma\cup (g_{n-1}g_n)^{-1}\Sigma\cup\cdots (g_2\cdots g_n)^{-1}\Sigma$.
\end{proof}

Let $A\subset V$ be an $\Hr$-orbit transversal. For every $v\in V$
there exists a unique element $A$ that belongs to the same $\Hr$-orbit
as $v$. Let us denote it by $\alpha(v)$. Choose a germ
$\delta_v\in\Hr$ such that
$\be(\delta_v)=\alpha(v)$ and $\en(\delta_v)=v$. 
For $g\in G$ and $v\in V$ denote by $g_v$ the element of $\Gr$ defined by
\begin{equation}
\label{eq:gv}
g_v=\delta_{g(v)}^{-1}(g,v)\delta_v,
\end{equation}
and note that it satisfies the cocycle relation $(gg')_v = g_{g'(v)} g'_v$.

Let $\Gr|_A$ (resp.\ $\Hr|_A$) be the set of germs $\gamma\in\Gr$
(resp.\ $\gamma\in\Hr$) with the target and the origin in $A$. Note
that $\Hr|_A$ is the disjoint union of the isotropy groups $\Hr_a$ for $a
\in A$, and that $g_v \in \Gr|_A$ for all $g \in G$ and $v \in V$.
Consider the quotient $Z=\Gr|_A/\Hr|_A$ of $\Gr|_A$ defined by
the right action of $\Hr|_A$, i.e., two germs $\gamma_1,
\gamma_2\in \Gr|_A$ are equivalent if there
exists $\gamma\in\Hr$ such that $\gamma_2=\gamma_1\gamma$. Note that
then $\en(\gamma_1)=\en(\gamma_2)$ and $\be(\gamma_1)=\be(\gamma_2)$, hence the maps $\en:Z\arr A$ and $\be:Z \to A$ are well defined.

Let $\mathcal{P}$ be the set of functions
$\phi:V\arr Z$ such that $t(\phi(v))=\alpha(v)$ for all $v \in V$ and of \emph{finite
  support}, i.e., such that the values of $\phi(v)$ are trivial (i.e.,
belong to $\Hr_{\alpha(v)}$) for all but a finite number of values $v\in V$.

For $\phi \in \mathcal P$, $g \in G$ and $v \in V$, define $g(\phi)(v)
= g_{g^{-1}v} \cdot (\phi( g^{-1} v))$. By assumption \ref{C2} of
the theorem, $g(\phi)$ belongs to $\mathcal P$ and this defines an action of $G$ on
$\mathcal P$ by the cocycle relation. 

\begin{proposition}
There exists a $G$-invariant mean on $\mathcal{P}$.
\end{proposition}

\begin{proof}
If we decompose $V=\bigcup_i V_i$ as a finite union of $G$-orbits, we get
a decomposition of $\mathcal P$ as a direct product of $\mathcal P_i$
where $\mathcal P_i$ are the restrictions of elements of $\mathcal P$
to $V_i$, and $G$ acts diagonally. It is therefore enough to prove the
proposition for the case when $G$ acts transitively on $V$.  

For every pair of elements $a,b \in A$, choose an element $f_{a,b} \in \Gr$ such that $\be(f_{a,b})=a$ and $\en(f_{a,b})=b$. We also assume that $f_{a,a}$ is the identity of $\Gr_a$. For every $\gamma \in \Gr|_A$ consider the element $\widetilde \gamma \in \coprod_{a \in A} \Gr_a$ (disjoint union) defined by $\widetilde \gamma = f_{\en(\gamma),\be(\gamma)} \gamma \in \Gr_{\be(\gamma)}$. We also denote by $\widetilde \cdot$ the induced map $\Gr|_A /\Hr|_A \to \coprod_{a \in A} \Gr_a/\Hr_a$.

For every $\phi \in \mathcal P$, consider the map $\psi\colon V \to
\coprod_{a \in A} \Gr_a/\Hr_a$ defined by $\psi(v) =
\widetilde{\phi(v)}$. The map $\phi \mapsto \psi$ allows to identify
$\mathcal P$ with the set $\widetilde{\mathcal P}$ of functions $\psi$
from $V$ to $\coprod_{a \in A} \Gr_a/\Hr_a$ such that $\psi(v) =
\Hr_{\alpha(v)}$ for all but finitely many $v$'s. 

One easily checks that the action of $G$ on $\widetilde{\mathcal{P}}$ using this identification is given by
\begin{equation}
\label{eq:gactionphi}
(g \cdot \psi)(g v)= f_{\alpha(g v),\be(\psi(v))} g_v
f_{\alpha(v),\be(\psi(v))}^{-1}\psi(v),
\end{equation}
where $g_v$ is given by~\eqref{eq:gv}. If the germ $(g,v)$ belongs to $\Hr$, then $\alpha(v) = \alpha( g v)$ and $g_v \in \Hr_{\alpha(v)}$. If additionally $\psi(v)$ is trivial (i.e. is equal to $\Hr_{\alpha(v)}$), then so is $(g \cdot \psi)(g v)$ by our choice of $f_{a,a}=1$.

By Lemma~\ref{duality} and Theorem~\ref{prop=equiv_sobolev} there
exists a $G$-invariant mean on $\mathcal{P}_f(V)$ giving full weight
to the collection of sets containing a given point $p\in V$. Note that
since the mean is $G$-invariant, finitely additive, and the action of
$G$ on $V$ is transitive, the mean gives full weight to the collection
of sets containing any given finite subset of $V$. In particular, it
gives full weight to the collection of sets containing $\Sigma$.

It follows then from Theorem~\ref{th:amenabcriteria} that
for every $\epsilon>0$ there exists a finite
subset $\mathcal{F}$ of $\mathcal{P}_f(V)$ which is
$\epsilon$-invariant under the action of elements of $S$, such
that every element of $\mathcal{F}$ contains $\Sigma$. Moreover, since
$G$ preserves cardinalities of elements of $\mathcal{P}_f(V)$, we can
choose $\mathcal{F}$ consisting of sets of the same cardinality $N$.

Fix $x_0 \in A$. Let us assume at first that $\Gr_{x_0}/\Hr_{x_0}$ is infinite.
Let $R$ be the finite subset of $\Gr_{x_0}$ defined by
\[R=\{f_{\alpha(g v),x_0}g_vf_{\alpha(v),x_0}^{-1}, v\in\cup_{B\in\mathcal{F}}B, g\in S\}.\]
Since the group $\Gr_{x_0}$ is amenable, for
every $\epsilon>0$ there exists a subset
$F$ of $\Gr_{x_0}/\Hr_{x_0}$ such that $|\gamma F \Delta F|\leq \epsilon/N$ for all $\gamma \in R$. Since $\Gr_{x_0}/\Hr_{x_0}$ is infinite we may assume that $F$ does not contain the trivial
element $\Hr_{x_0}$.

Let $\widehat{\mathcal{F}}$ be the set of functions
$V\arr \coprod_{a \in A} \Gr_{a}/\Hr_{a}$ such that there exists
$B\in\mathcal{F}$ such that $\phi(v)=\Hr_{\alpha(v)}$ for
$v\notin B$, and $\phi(v)\in F$ for $v\in B$.
 Then $\widehat{\mathcal{F}}$ is split into a disjoint union of sets
$\widehat{\mathcal{F}}_B$ of functions with support equal to
$B\in\mathcal{F}$. We use here the fact that $F$ does not contain the trivial
element $\Hr_{x_0}$ of $\Gr_{x_0}/\Hr_{x_0}$. For each $B\in\mathcal{F}$ we have
\[\left|\widehat{\mathcal{F}}_B\right|=|F|^{|B|} = |F|^N,\]
hence
\[\left|\widehat{\mathcal{F}}\right|=
|\mathcal{F}|\cdot |F|^{N}.\]
The number of sets $B\in\mathcal{F}$ such that $g(B)\notin\mathcal{F}$
for some $g\in S$ is not larger than $\epsilon|\mathcal{F}|$. It
follows that the number of functions $\phi\in\widehat{\mathcal{F}}$
with support equal to such sets is not larger than
\[\epsilon|\mathcal{F}|\cdot |F|^{N-1}=\epsilon\left|\widehat{\mathcal{F}}\right|.\]

Let $g\in S$. Suppose that $B, g(B)\in\mathcal{F}$, and let
$\psi\in\widehat{\mathcal{F}}_B$. Take $v \notin B$. Then $\psi(v) =
\Hr_{\alpha(v)}$, and by our assumption that $\Sigma \subset B$, $g_v
\in \Hr_{\alpha(v)}$. By \eqref{eq:gactionphi} the support of
$g(\psi)$ is therefore a 
subset of $g(B)$. It follows that $g(\psi)$ does not belong to
$\widehat{\mathcal{F}}$ if and only if there exists $v\in B$ such that
$f_{\alpha(g v),x_0} g_v f_{\alpha(v),x_0}^{-1} \psi(v)\notin F$.
It follows that the cardinality of the set of elements
$\psi\in\widehat{\mathcal{F}}_B$ such that
$g(\psi)\notin\widehat{\mathcal{F}}$ is at most
\[\sum_{v \in B} |f_{\alpha(g v),x_0} g_v f_{\alpha(v),x_0}^{-1} F \setminus F| |F|^{N-1} < \epsilon |F|^N.\]

It follows that the cardinality of the set of functions
$\phi\in\widehat{\mathcal{F}}$ such that
$g(\phi)\notin\widehat{\mathcal{F}}$ for some $g\in S$ is not greater
than
\[\left(\epsilon+\epsilon\cdot|S|\right)\cdot
\left|\widehat{\mathcal{F}}\right|.\]
Since $\epsilon$ is an arbitrary positive number, it follows that the
action of $G$ on $\mathcal{P}$ is amenable.

The case when $\Gr_{x_0}/\Hr_{x_0}$ is finite can be reduced to the infinite
case, for instance, by the following trick. Replace $\Gr_{a}$ by
$\Gr_{a}\times\mathbb{Z}$, and define $\tilde{\mathcal{P}}$ as the set
of maps  $V\arr \coprod_{a \in A} (\Gr_{a}\times\mathbb{Z})/\Hr_{a}$ of
finite support, where support is defined in the same way as
before. Here $\Hr_{a}$ is considered to be the subgroup of
$\Hr_{a}\times\{0\}<\Gr_{a}\times\mathbb{Z}$.
Define the action of $G$ on $\tilde{\mathcal{P}}$ by the same
formulae~\eqref{eq:gv} and~\eqref{eq:gactionphi} as the action of $G$
on $\mathcal{P}$. Then, by the same arguments as above, there exists a
$G$-invariant mean on $\tilde{\mathcal{P}}$. The projection
$\coprod_{a \in A}\Gr_{a}\times\mathbb{Z}\arr \coprod{a \in A} \Gr_{a}$ induces a surjective
$G$-equivariant map $\tilde{\mathcal{P}}\arr\mathcal{P}$, which implies
that $\mathcal{P}$ has a $G$-invariant mean.
\end{proof}

In order to prove amenability of $G$, it remains to show that for
every $\phi\in\mathcal{P}$ the stabilizer $G_\phi$ of $\phi$ in $G$ is
amenable, see Proposition~\ref{pr:amenableactionandstabilizer}.
We will use a modification of an argument of Y.~de~Cornulier
from~\cite[Proof of Theorem~4.1.1]{cornulier:pleinstopologiques}. Suppose that it is not amenable.
It follows from Lemma~\ref{lem:recurrentamenablesubgroup} that for every
$v\in V$ the action of every subgroup of $G$ on the orbit of $v$ has
an invariant mean. Consequently, by Proposition~\ref{pr:amenableactionandstabilizer},
the stabilizer $G_{\phi, v}$ of $v$ in
$G_\phi$ is non-amenable. It follows by induction, that the intersection $K$
of the pointwise fixator of the support of $\phi$ with $G_\phi$ is
non-amenable. Since $K$ fixes all points $y\in\supp\phi$, we have a
homomorphism from $K$ to the direct product of the isotropy groups
$\Gr_y$ for $y\in\supp\phi$. If $g$ belongs to the kernel of this homomorphism,
then all germs of $g$ at points of $V$ belong to $\Hr$, hence
$g\in[[\Hr]]$, by Lemma~\ref{lem:singular}. It follows
that $K$ is an extension of a subgroup of $[[\Hr]]$ by a subgroup of a
finite direct product of isotropy groups $\Gr_y$. But this implies
that $K$ is amenable, which is a contradiction.
\end{proof}

\begin{remark}
Note that since conditions on the elements of $G$ in
Theorem~\ref{th:amenhomeo} are local, it
follows from the theorem that $[[\Gr]]$ is amenable if $\X$ is
compact.
\end{remark}

\section{Homeomorphisms of bounded type}

We start with description of groupoids $\Hr$ such that the full group
$[[\Hr]]$ is locally finite, which is in some sense the simplest class
of amenable groups. We will use then such groupoids as base for
application of Theorem~\ref{th:amenhomeo} and construction of
non-elementary amenable groups.

\subsection{Bratteli diagrams}

A \emph{Bratteli diagram} $\mathsf{D}=((V_i)_{i\ge 1}, (E_i)_{i\ge 1},
\be, \en)$ is defined by two sequences of finite sets $(V_i)_{i=1,
  2, \ldots}$ and $(E_i)_{i=1, 2, \ldots}$, and sequences of maps
$\be:E_i\arr V_i$, $\en:E_i\arr V_{i+1}$. We interpret the sets $V_i$
as sets of vertices of the diagram partitioned into
\emph{levels}. Then $E_i$ is the set of edges connecting vertices of
the neighboring levels $V_i$ and $V_{i+1}$. See~\cite{bratteli:af} for
their applications in theory of $C^*$-algebras. A \emph{path of length
  $n$}, where $n$ is a natural number or
$\infty$, in the diagram $\mathsf{D}$ is a sequence of edges
$e_i\in E_i$, $1\le i\le n$, such that $\en(e_i)=\be(e_{i+1})$ for
all $i$. Denote by $\paths_n(\mathsf{D})=\paths_n$ the set of paths of
length $n$. We will write $\paths$ instead of $\paths_\infty$.

The set $\paths$ is a closed subset of the
direct product $\prod_{i\ge 1}E_i$, and thus is a compact totally
disconnected metrizable space. If $w=(a_1, a_2, \ldots, a_n)\in\paths_n$ is a finite path of $\mathsf{D}$,
then we denote by $w\paths$ the set of all paths beginning
with $w$. Let $w_1=(a_1, a_2, \ldots, a_n)$ and $w_2=(b_1, b_2,
\ldots, b_n)$ be elements of $\paths_n$ such that
$\en(a_n)=\en(b_n)$. Then for every infinite
path $(a_1, a_2, \ldots, a_n, e_{n+1}, \ldots)$, the sequence $(b_1,
b_2, \ldots, b_n, e_{n+1}, e_{n+2}, \ldots)$ is also a path. The map
\begin{equation}
\label{eq:Tgamma}
T_{w_1, w_2}:(a_1, a_2, \ldots, a_n, e_{n+1}, \ldots)\mapsto (b_1,
b_2, \ldots, b_n, e_{n+1}, e_{n+2}, \ldots)
\end{equation}
is a homeomorphism $w_1\paths\arr w_2\paths$.

Denote by $\tail_{w_1, w_2}$ the set of germs of the homeomorphisms $T_{w_1, w_2}$. It is naturally identified with the set of all pairs $((e_i)_{i\ge
  1}, (f_i)_{i\ge 1})\in\paths^2$ such that $e_i=a_i$
and $f_i=b_i$ for all $1\le i\le n$, and $e_i=f_i$ for all $i>n$.

Let $\tail(\mathsf{D})$ (or just $\tail$) be the groupoid of germs of the semigroup generated by the transformations of the form $T_{w_1, w_2}$. It can be identified with the set of all pairs of cofinal paths, i.e., pairs of paths $(e_i)_{i\ge 1}$,
$(f_i)_{i\ge 1}$ such that $e_i=f_i$ for all $i$ big enough. The groupoid structure coincides with the groupoid structure of an equivalence relation:
the product $(w_1, w_2)\cdot (w_3, w_4)$ is defined if
and only if $w_2=w_3$, and then it is equal to $(w_1,
w_4)$. Here $\be(w_1, w_2)=w_2$ and $\en(w_1,
w_2)=w_1$. It follows from the definition of topology on a groupoid of germs that topology  on $\tail$ is given by the basis of open sets of the
form $\tail_{w_1, w_2}$. We call $\tail$ the \emph{tail equivalence groupoid} of the Bratteli diagram $\mathsf{D}$. More on the tail equivalence
groupoids, their properties, and relation to $C^*$-algebras, see~\cite{exelrenault:AF}.

Let us describe the topological full group
$[[\tail]]$. By compactness of $\paths$, for every $g\in[[\tail]]$
there exists $n$ such that for every $w_1\in\paths_n$ there exist a path
$w_2\in\paths_n$ such that
$g(w)=T_{w_1, w_2}(w)$ for all $w \in w_1 \Omega$. Then we say that \emph{depth} of $g$ is at
most $n$. For every $v\in V_{n+1}$ denote by $\paths_v$ the set of paths
$w\in\paths_n$ ending in $v$. Then the group of all elements
$g\in[[\tail]]$ of depth at most $n$ is naturally identified with the
direct product $\prod_{v\in V_{n+1}}\symm(\paths_v)$ of symmetric
groups on the sets $\paths_v$. Namely, if $g=(\pi_v)_{v\in
  V_{n+1}}\in\prod_{v\in V_{n+1}}\symm(\paths_v)$ for
$\pi_v\in\symm(\paths_v)$, then $g$ acts on $\Omega$ by the rule
$g(w_0w)=\pi_v(w_0)w$, where $w_0\in\paths_v$ and $w_0w\in\paths$.
Let us denote the group of elements of $[[\tail]]$ of depth at most
$n$ by $[[\tail]]_n$. We obviously have $[[\tail]]_n\le
[[\tail]]_{n+1}$ and $[[\tail]]=\bigcup_{n\ge 1}[[\tail]]_n$. In
particular, $[[\tail]]$ is locally finite (i.e., every its finitely
generated subgroup is finite).

The embedding $[[\tail]]_n\hookrightarrow [[\tail]]_{n+1}$ is
\emph{block diagonal} with respect to the direct product
decompositions $\prod_{v\in V_{n+1}}\symm(\paths_v)$ and $\prod_{v\in
  V_{n+2}}\symm(\paths_v)$. For more on such embeddings of direct
products of symmetric and alternating groups and their
inductive limits see~\cite{leinenpuglisi:1type,lavnek:diagonal}.

\subsection{Homeomorphisms of bounded type}
Let $\mathsf{D}$ be a Bratteli diagram. Recall that we denote by
$\paths_v$, where $v$ is a vertex of $\mathsf{D}$, the set of paths of
$\mathsf{D}$ ending in $v$ (and beginning in a vertex of the first
level of $\mathsf{D}$), and by $w\Omega$ we denote
the set of paths whose beginning is a given finite path $w$.

\begin{defi}
\label{def:alphav}
Let $F:\paths\arr\paths$ be a homeomorphism. For $v\in V_i$
denote by $\alpha_v(F)$ the number of paths $w\in\paths_v$ such
that $F|_{w\paths}$ is not equal to a transformation of the
form $T_{w, u}$ for some $u\in\paths_v$.

The homeomorphism $F$ is said to be \emph{of bounded type} if
$\alpha_v(F)$ is uniformly bounded and the set of points $w\in\paths$
such that the germ $(F, w)$ does not belong to $\tail$ is finite.
\end{defi}

It is easy to see that the set of all homeomorphisms of bounded type
form a group.

\begin{theorem}
\label{th:boundedamenable}
Let $\mathsf{D}$ be a Bratteli diagram. Let $G$ be a group acting
faithfully by homeomorphisms of bounded type on
$\paths(\mathsf{D})$. If the groupoid of germs of $G$
has amenable isotropy groups, then the group $G$ is amenable.
\end{theorem}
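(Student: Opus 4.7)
The plan is to invoke Theorem~\ref{th:amenhomeo} with $\Hr=\tail(\mathsf{D})$, and to verify its four hypotheses. Conditions (1) and (4) are immediate: $[[\tail]]$ is locally finite and hence amenable, while amenability of the isotropy groups is part of the standing assumption. Condition (2) is precisely the second clause of the definition of bounded type, applied to each element of a finite symmetric generating set $S$ of $G$.

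The substantive work is condition (3): the recurrence of the orbital Schreier graph $\Gamma(x,G)$ for each singular point $x$. I will apply the Nash-Williams criterion (Theorem~\ref{th:nashwilliams}). A preliminary observation is that $G\cdot x$ meets only finitely many tail equivalence classes: let $B=\bigcup_{g\in S}\{y:(g,y)\notin\tail\}$, which is finite by bounded type. Starting from $x$, a path in the Schreier graph can only switch tail classes by crossing a point of $B$, so the number of tail classes touched is at most $|B|+1$. Let $x=x_1,\dots,x_k$ be representatives of these tail classes, and let $v_n^i\in V_{n+1}$ denote the vertex through which the path $x_i$ passes at level $n+1$. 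Define
\[F_n^i=\{y\in G\cdot x\cap[x_i] : y_j=(x_i)_j\text{ for all } j>n\},\qquad F_n=\bigcup_{i=1}^k F_n^i.\]
Each $F_n$ is finite, the sequence is increasing, and $\bigcup_n F_n=G\cdot x$.

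The central estimate is a uniform bound on $|\partial F_n|$. Fix $g\in S$ and $y\in F_n^i$, and let $w$ be the initial length-$n$ segment of $y$, so $w\in\Omega_{v_n^i}$. If $g|_{w\Omega}=T_{w,u}$ for some $u$ (a \emph{good} cylinder), then $g$ merely replaces the initial segment $w$ of $y$ by $u$, preserves the tail, and consequently sends $y$ into $F_n^i$. Hence $y\in\partial F_n$ forces $w$ to be one of the at most $\alpha_{v_n^i}(g)\le M$ bad cylinders, giving $|\partial F_n|\le kM|S|$, a bound independent of $n$. Since every $y\in G\cdot x$ eventually lies in the interior of $F_n$, I can pass to a subsequence $(F_{n_\ell})$ along which the boundaries are pairwise disjoint, and the Nash-Williams series $\sum_\ell 1/|\partial F_{n_\ell}|$ diverges. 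The main obstacle will be the bookkeeping connecting the definition of bounded type to the boundary count, in particular justifying that the tail-preserving form $T_{w,u}$ on a good cylinder genuinely keeps $F_n^i$ invariant and that the multi-class exhaustion behaves uniformly across the $k$ classes. Once this is in hand, Theorem~\ref{th:amenhomeo} delivers the amenability of $G$.
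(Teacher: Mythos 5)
Your proposal is correct and follows essentially the same route as the paper: apply Theorem~\ref{th:amenhomeo} with $\Hr$ the tail groupoid (the paper uses $\tail\cap\Gr$, which is equivalent since that theorem's proof begins by intersecting with $\Gr$ anyway), and establish recurrence via Nash--Williams using exactly the exhaustion $F_n=\bigcup_i F_n^i$ over the finitely many tail classes with the uniform boundary bound coming from $\sup_v\alpha_v(g)<\infty$ — this is the paper's Lemma~\ref{lem:finbdry}. The only cosmetic quibbles are that the count of tail classes met by the orbit should be $1+|B|\,|S|$ rather than $|B|+1$ (finiteness is all that matters), and that one should note explicitly the standard reduction to finitely generated $G$ before fixing $S$.
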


\begin{proof}
It is enough to prove the theorem for finitely generated groups $G$.
Since $[[\tail]]$ is locally finite, it is amenable. Therefore,
by Theorem~\ref{th:amenhomeo} applied with $\Hr = \tail \cap \Gr$,
where $\Gr$ is the groupoid of germs of $G$, it is enough to prove that the orbital
Schreier graphs of the action of $G$ on $\paths$ are recurrent.

Let $w\in\paths$, and let $S$ be a finite generating set of $G$.
Consider the Schreier graph of the orbit $G(w)$ of $w$. For $F\subset G(w)$ denote
by $\partial_S F$ the set of elements $x\in F$ such that $g(x)\notin
F$ for some $g\in S$.
\begin{lemma}
\label{lem:finbdry}
There exists an increasing sequence of finite subsets $F_n\subset G(w)$
such that $\partial_S F_n$ are disjoint and $|\partial_S F_n|$ is uniformly bounded.
\end{lemma}

\begin{proof}
The orbit $G(w)$ is the union of a finite number of $\tail$-orbits.
Let $P\subset\Gamma(S, w)$ be a $\tail$-orbit transversal. For
$u=(e_i)_{i\ge 1}\in P$ denote by $F_{n, u}$ the set of
paths of the form $(a_1, a_2, \ldots, a_n, e_{n+1}, e_{n+2},
\ldots)$. It is a finite subset of $G(w)$. Let $F_n=\bigcup_{u\in
  P}F_{n, u}$. Then $\bigcup_{n\ge 1}F_n=G(w)$.

Let $s\in S$. The number of paths $v=(a_1, a_2, \ldots, a_n,
e_{n+1}, e_{n+2}, \ldots)\in F_{n, u}$ such that $s(v)\notin
F_{n, u}$ is not greater than
$\alpha_{\be(e_{n+1})}(s)$, see Definition~\ref{def:alphav}.
It follows that $|\partial_S F_n|$ is not greater than
$|P|\cdot|S|\cdot\max\{\alpha_v(s)\;:\;s\in S, v\in\bigcup V_i\}$,
which is finite. We can assume that $\partial_S F_n$ are disjoint by
taking a subsequence.
\end{proof}

Applying Theorem~\ref{th:nashwilliams}, we conclude that the graph
$\Gamma(S, w)$ is recurrent. Note that this proof of recurrence of the graphs of actions
of homeomorphisms of bounded type is essentially the same as the proof
of~\cite[Theorem~V.24]{bondarenko:phd}.
\end{proof}

In the remaining subsections, we give some examples of applications of Theorem~\ref{th:boundedamenable}.

\subsection{Groups acting on rooted trees and bounded automata}

Let us start with the case when the Bratteli diagram
$\mathsf{D}$ is such that every level $V_i$
contains only one vertex. Then the diagram is determined by the
sequence $\alb=(E_1, E_2, \ldots)$ of finite sets of edges.
We will denote $\paths_n=E_1\times E_2\times\cdots\times E_n$ by $\alb^n$, and
$\paths=\prod_{i=1}^\infty E_i$ by $\xo$. The disjoint union $\xs=\bigsqcup_{n\ge 0}\alb^n$, where $\alb^0$ is a
singleton, has a natural structure of a rooted tree. The sets
$\alb^n$ are its levels, and two paths $v_1\in\alb^n$ and
$v_2\in\alb^{n+1}$ are connected by an edge if and only if $v_2$ is
a continuation of $v_1$, i.e., if $v_2=v_1e$ for some $e\in E_{n+1}$.

We present here a short overview of the notions and results on groups
acting on level-transitive rooted trees. For more details,
see~\cite[Section~6]{grineksu_en} and~\cite{nek:free}. Denote
by $\aut\xs$ the automorphism group of the rooted tree $\xs$.
The tree $\xs$ is level-transitive, i.e., $\aut\xs$ acts transitively
on each of the levels $\alb^n$. Denote by $\xs_{(n)}$ the tree of finite paths of the
``truncated'' diagram defined by the sequence $\alb_{(n)}=(E_{n+1}, E_{n+2},
\ldots)$.

For every $g\in\aut\xs$ and $v\in\alb^n$ there exists an
automorphism $g|_v\in\aut\xs_{(n)}$ such that
\[g(vw)=g(v)g|_v(w)\]
for all $w\in\xs_{(n)}$. The automorphism $g|_v$ is called the
\emph{section} of $g$ at $v$.

We have the following obvious properties of sections:
\begin{equation}
\label{eq:section}
(g_1g_2)|_v=g_1|_{g_2(v)}g_2|_v,\qquad g|_{v_1v_2}=g|_{v_1}|_{v_2}
\end{equation}
for all $g_1, g_2, g\in\aut\xs$, $v, v_1\in\xs$,
$v_2\in\xs_{(n)}$, where $n$ is the length of $v_1$.

The relation between groups acting on rooted trees and the
corresponding full topological groups of groupoids of germs is much
closer that in the general case of groups acting by homeomorphisms on
the Cantor set.

\begin{proposition}
\label{pr:fullgroupamenabilitytree}
Let $G$ be a group acting on a locally connected rooted tree $\xs$. Let
$\Gr$ be the groupoid of germs of the corresponding action on the
boundary $\xo$ of the tree. Then $[[\Gr]]$ is amenable if and
only if $G$ is amenable.
\end{proposition}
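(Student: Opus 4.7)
I would prove the two implications separately. For $[[\Gr]]$ amenable $\Rightarrow$ $G$ amenable: every $g\in G$ acts on $\xo$ as a homeomorphism all of whose germs belong to $\Gr$, defining a homomorphism $\rho\colon G\arr[[\Gr]]$. Faithfulness of the action of $G$ on $\xs$ makes $\rho$ injective, since every $v\in\xs$ is a prefix of some $w\in\xo$, and $g(w)=w$ forces $g(v)=v$. Thus $G$ is a subgroup of an amenable group, hence amenable.

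For the converse, the plan is to write $[[\Gr]]$ as an increasing union of amenable subgroups of bounded ``depth''. For each $n\ge 1$ let $[[\Gr]]_n\subset[[\Gr]]$ consist of those $h$ for which there exist $g_v\in G$, one per $v\in\alb^n$, with $h|_{v\xo}=g_v|_{v\xo}$. Local connectedness of the tree guarantees that $\xo$ is compact with the cylinders $v\xo$ ($v\in\alb^n$) forming a clopen basis, so by the local definition of the full topological group one has $[[\Gr]]=\bigcup_n[[\Gr]]_n$, a nested chain.

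The core step is to show each $[[\Gr]]_n$ is amenable. Sending $h$ to the permutation $v\mapsto g_v(v)$ of the finite set $\alb^n$ gives a homomorphism $[[\Gr]]_n\arr\symm(\alb^n)$ with finite image and kernel $K_n$ consisting of elements preserving each cylinder $v\xo$ setwise. On such a cylinder $h$ agrees with some $g_v\in G_v$, and the section $g_v|_v\in\aut\xs_{(n)}$ is independent of the choice of $g_v$ (two choices differ by an element acting trivially on $v\xo$, which has trivial section at $v$), so $h\mapsto(g_v|_v)_{v\in\alb^n}$ defines an injective homomorphism $K_n\hookrightarrow\prod_{v\in\alb^n}H_v$, where $H_v$ is the image of the section map $G_v\arr\aut\xs_{(n)}$. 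Each $H_v$ is a quotient of the subgroup $G_v\le G$ and hence amenable, so $K_n$ is amenable, $[[\Gr]]_n$ is amenable as a finite-by-amenable extension, and $[[\Gr]]=\bigcup_n[[\Gr]]_n$ is amenable as a directed union.

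The one subtlety to verify carefully is the well-definedness and injectivity of the section embedding of $K_n$; aside from that, everything reduces to standard closure properties of the class of amenable groups (subgroups, quotients, finite products, extensions, and directed unions).
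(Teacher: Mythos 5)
Your proof is correct and follows essentially the same route as the paper: both reduce to elements of bounded depth via compactness of $\xo$, map onto a finite permutation group of the level $\alb^n$, and embed the kernel into a finite direct product of quotients of vertex stabilizers of $G$. The only cosmetic difference is that you exhaust $[[\Gr]]$ by the nested subgroups $[[\Gr]]_n$, whereas the paper reduces to finitely generated subgroups first; the structural argument is identical.
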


\begin{proof}
Since $G\le [[\Gr]]$, amenability of $[[\Gr]]$ implies amenability of
$G$. Let us prove the converse implication. Suppose that $G$ is
amenable. It is enough to prove that every finitely generated
subgroup of $[[\Gr]]$ is amenable. Let $S\subset[[\Gr]]$ be a finite set.
There exists a level $\alb^n$ of the tree $\xs$ such
that for every $v\in\alb^n$ and $g_i$ restriction of the action of
$g_i$ onto $v\xs_{(n)}$ is equal to restriction of an element of
$G$. Then every element $g\in S$ permutes the cylindrical sets
$v\xs_{(n)}$ for $v\in\alb^n$ and acts inside each of these sets as an
element of $G$. It follows that $\langle S\rangle$ contains a subgroup
of finite index which can be embedded into the direct product of a
finite number of quotients of subgroups of $G$. Consequently,
amenability of $G$ implies amenability of $\langle S\rangle$.
\end{proof}

\begin{defi}
An automorphism $g\in\aut\xs$ is said to be \emph{finitary} of
\emph{depth} at most $n$ if all sections $g|_v$ for $v\in\alb^n$ are trivial.
\end{defi}

It is an easy corollary of~\eqref{eq:section} that the set of finitary
automorphisms and the set of finitary automorphisms of depth at most
$n$ are groups. The latter group is finite, hence the group of all
finitary automorphisms of $\xs$ is locally finite. It is also
easy to see that the groupoid of germs of the action of the group of
finitary automorphisms on $\xo$ coincides with the tail equivalence
groupoid $\tail$ of the diagram $\mathsf{D}$.

\begin{defi}
\label{def:alphatree}
Let $g\in\aut\xs$. Denote by $\alpha_n(g)$ the number of paths
$v\in\alb^n$ such that $g|_v$ is non-trivial. We say that
$g\in\aut\xs$ is \emph{bounded} if the sequence $\alpha_n(g)$ is bounded.
\end{defi}

If $g\in\aut\xs$ is bounded, then there exists a finite set $P\subset\paths$
of infinite paths such that $g|_v$ is non-finitary only if $v$ is a
beginning of some element of $P$.

The following is proved in~\cite[Theorem~3.3]{nek:free}.

\begin{theorem}
\label{th:freesubgroups}
Let $G$ be a group of automorphisms of a locally finite rooted tree
$T$. If $G$ contains a non-abelian free subgroup, then either there
exists a free non-abelian subgroup $F\le G$ and a point $w$ of the
boundary $\partial T$ of the tree such that the stabilizer of $w$ in
$F$ is trivial, or there exists $w\in\partial T$ and a free
non-abelian subgroup of the group of germs $\Gr_w$.
\end{theorem}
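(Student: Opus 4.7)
The plan is to argue by contradiction. Assume $G$ contains a free non-abelian subgroup $F_0$, yet neither conclusion holds, i.e., (A) every free non-abelian $F \le G$ has non-trivial stabilizer at every $w \in \partial T$, and (B) no isotropy group $\Gr_w$ contains a free non-abelian subgroup.

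The starting observation is a trichotomy for stabilizers. Define $B = \{w \in \partial T : \text{some } g \in F_0 \setminus \{1\} \text{ has trivial germ at } w\}$, an open $F_0$-invariant subset of $\partial T$. For $w \notin B$ the stabilizer $(F_0)_w$ injects into $\Gr_w$, and being a subgroup of the free group $F_0$ it is trivial, infinite cyclic, or contains $F_2$. These three cases respectively give conclusion (a) with $F = F_0$, conclusion (b) directly, or---using residual nilpotence of $F_0$ to select a term $\gamma_n(F_0)$ of the lower central series avoiding the cyclic subgroup $(F_0)_w$---conclusion (a) with $F = \gamma_n(F_0)$, since $\gamma_n(F_0)\cap \langle c\rangle =1$ when $c\in \gamma_{n-1}(F_0)\setminus\gamma_n(F_0)$ and $\gamma_n(F_0)$ is itself free non-abelian. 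Hence under (A) and (B) we must have $B = \partial T$.

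By compactness of $\partial T$ we cover it by finitely many cylinders $v\xo$ at a common level $n$, on each of which some element of $F_0 \setminus \{1\}$ acts trivially. Passing to the level-$n$ pointwise stabilizer $F_0^{(n)} \le F_0$ (finite index, hence free non-abelian) and replacing witnesses by suitable powers, every restriction map $\pi_v : F_0^{(n)} \to \aut(T_v)$ has non-trivial kernel. The embedding $F_0^{(n)} \hookrightarrow \prod_{v \in \mathsf{X}^n} \pi_v(F_0^{(n)})$, together with the standard fact that a non-abelian free group sitting inside a finite direct product must embed into one of the factors (proved by noting that $F_2$ has no non-trivial normal cyclic subgroup), yields $v^* \in \mathsf{X}^n$ with $F_2 \le \pi_{v^*}(F_0^{(n)})$. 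Since $F_2$ is free, the induced surjection from its preimage splits, producing a free non-abelian $F_1 \le G$ on which $\pi_{v^*}$ is injective. Applying the trichotomy of the previous paragraph to $F_1$ either closes the argument, or forces the analog of $B$ for $F_1$ to again equal $\partial T$, which in turn forces the $F_2$-action $\pi_{v^*}(F_1)$ on $\partial T_{v^*}$ to satisfy the same hypotheses on the smaller rooted tree $T_{v^*}$.

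The main obstacle is terminating this descent. I would attempt to align the splittings coherently along a nested sequence $v^* \prec v^{**} \prec \cdots$ of vertices, defining a limiting boundary point $w^* \in \partial T$, and then extract from the accumulated data either two elements of $G_{w^*}$ whose germs play ping-pong in $\Gr_{w^*}$---contradicting (B)---or, via a diagonal choice of section at some finite stage, a free subgroup whose stabilizer at an appropriately chosen boundary continuation is trivial---contradicting (A). The likely source of the two ping-pong germs is the non-trivial kernels $\ker\pi_v$ produced at each stage, which supply elements fixing $w^*$ but acting non-trivially arbitrarily close to it. Controlling this recursion and producing the explicit ping-pong pair in $\Gr_{w^*}$ is the delicate heart of the argument.
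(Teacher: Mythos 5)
The paper itself does not prove this statement; it is quoted from \cite[Theorem~3.3]{nek:free}, so there is no in-paper argument to compare against, and your proposal must stand on its own. It does not, yet: the third paragraph is an honest description of a descent that you have not carried out, and you say yourself that terminating it and producing the ping-pong pair in $\Gr_{w^*}$ is ``the delicate heart of the argument.'' As written, that is a genuine gap, not a proof. The first two paragraphs, on the other hand, are essentially correct: $B$ is open, the trichotomy at $w\notin B$ works (including the lower-central-series trick in the cyclic case, since the quotients $\gamma_m/\gamma_{m+1}$ of a free group are torsion-free and a non-trivial normal subgroup of a non-abelian free group is again non-abelian free), and the compactness argument does show that $B=\partial T$ forces every restriction $\pi_v\colon F_0^{(n)}\to\aut(T_v)$ to have non-trivial kernel.

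The gap is, however, self-inflicted: those ingredients already yield a contradiction, so the descent is unnecessary. The correct form of your ``standard fact'' is stronger than the one you invoke: if a non-abelian free group $F$ embeds in a finite direct product $A_1\times\cdots\times A_k$, then some coordinate projection is injective on \emph{all} of $F$, not merely on a copy of $F_2$ inside some factor. For two factors this is immediate: if $x=(1,\beta)$ and $y=(\alpha,1)$ are non-trivial elements of the two kernels, they commute coordinate-wise, hence lie in a common cyclic subgroup $\langle z\rangle$ of $F$ with $x=z^m$, $y=z^k$, $m,k\neq 0$; then $z_1^m=1$ and $z_2^k=1$, so $z^{mk}=1$ while $z\neq 1$, contradicting torsion-freeness of $F$. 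For $k$ factors, group them as $A_1\times(A_2\times\cdots\times A_k)$ and induct. With this version, your two conclusions in the case $B=\partial T$ --- that $F_0^{(n)}$ is free non-abelian and embeds in the finite product $\prod_v\pi_v(F_0^{(n)})$, and that every $\ker(\pi_v|_{F_0^{(n)}})$ is non-trivial --- contradict each other outright. Hence $B\neq\partial T$, and the trichotomy at any $w\notin B$ finishes the proof with no recursion at all. (Two small points to address in a final write-up: make sure the witnesses genuinely lie in $\ker\pi_v$ by passing to the subtree of vertices with infinite descendants, so that acting trivially on the boundary of $T_v$ means acting trivially on $T_v$; and note that a free non-abelian subgroup of the germ group of $F_0$ at $w$ injects into $\Gr_w$ because the kernel of $(F_0)_w\to\Gr_w$ is exactly $(F_0)_w$ intersected with the kernel of $G_w\to\Gr_w$.)
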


In particular, if the orbits of the action of $G$ on $\partial T$ have
sub-exponential growth, and the groups of germs $\Gr_w$ do not
contain free subgroups (e.g., if they are finite), then $G$ does not
contain a free subgroup. We get therefore the following corollary of
Theorems~\ref{th:boundedamenable} and~\ref{th:freesubgroups}. (The
proof of the fact that there is no freely acting subgroup of $G$ under
conditions of Theorem~\ref{th:boundedtree} is the same as the proof
of~\cite[Theorem~4.4]{nek:free}, and also follow from Lemma~\ref{lem:finbdry}.)

\begin{theorem}
\label{th:boundedtree}
Let $G$ be a subgroup of the group of bounded automorphisms of $\xs$.
\begin{enumerate}
\item If the isotropy groups $\Gr_w$ are amenable, then the group $G$
is amenable.
\item If the isotropy groups $\Gr_w$ have no free subgroups, then the group $G$
has no free subgroups.
\end{enumerate}
\end{theorem}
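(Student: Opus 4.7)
The plan is to derive both statements from the general results already established, by identifying bounded automorphisms of $\xs$ with homeomorphisms of bounded type for a suitable Bratteli diagram.

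First I would set up the dictionary. Let $\mathsf{D}$ be the Bratteli diagram with a single vertex at each level and edge sets $E_i$; then $\paths(\mathsf{D})=\xo$ and, as already noted after Definition~\ref{def:alphatree}, the tail equivalence groupoid $\tail$ of $\mathsf{D}$ is exactly the groupoid of germs of the locally finite group of finitary automorphisms. With this identification, part (1) should reduce directly to Theorem~\ref{th:boundedamenable}. Indeed, for the unique vertex $v\in V_{i+1}$ one has $\paths_v=\alb^i$, and for $w\in\alb^i$ the restriction $g|_{w\paths}$ agrees with some $T_{w,u}$ if and only if $g|_w$ is the trivial automorphism. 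Hence $\alpha_v(g)=\alpha_i(g)$, so boundedness of $g$ in the sense of Definition~\ref{def:alphatree} is the same as uniform boundedness of $\alpha_v(g)$. Moreover, if $g$ is bounded, then there is a finite set $P\subset\xo$ such that $g|_v$ is non-finitary only when $v$ begins some element of $P$; the set of $w\in\xo$ for which $(g,w)\notin\tail$ is contained in $P$ and hence finite. So bounded automorphisms are homeomorphisms of bounded type, and Theorem~\ref{th:boundedamenable} gives amenability of $G$ under the isotropy hypothesis.

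For part (2), the plan is to combine Theorem~\ref{th:freesubgroups} with the recurrence estimate already built into the proof of Theorem~\ref{th:boundedamenable} via Lemma~\ref{lem:finbdry}. Assume for contradiction that $G$ contains a non-abelian free subgroup. Theorem~\ref{th:freesubgroups} then leaves two alternatives. The second alternative, that some group of germs $\Gr_w$ contains a non-abelian free subgroup, is excluded by hypothesis. So there must exist a non-abelian free subgroup $F\le G$ and a boundary point $w\in\xo$ whose stabilizer in $F$ is trivial. Then the Schreier graph of $F$ on $F(w)$ is isomorphic to the Cayley graph of $F$ with respect to its free generators, which is a regular tree of degree at least $4$ and hence not recurrent. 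On the other hand $F$ is itself finitely generated by bounded automorphisms, so Lemma~\ref{lem:finbdry} applies to $F$ acting on $F(w)$ and yields an exhaustion $F_n\subset F(w)$ by finite sets with disjoint boundaries of uniformly bounded cardinality; Theorem~\ref{th:nashwilliams} then forces the simple random walk on this Schreier graph to be recurrent, a contradiction.

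The only genuinely new content is the translation between the two boundedness notions in Step~1; the main subtlety will be to verify, when applying Theorem~\ref{th:freesubgroups} in part (2), that the Schreier graph of a freely acting non-abelian free group is transient and thereby incompatible with the recurrence obtained from Lemma~\ref{lem:finbdry}. Everything else is a direct invocation of Theorems~\ref{th:boundedamenable}, \ref{th:nashwilliams}, and~\ref{th:freesubgroups}.
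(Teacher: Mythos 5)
Your proposal is correct and follows essentially the same route as the paper, which obtains part (1) by viewing bounded automorphisms as homeomorphisms of bounded type for the one-vertex-per-level Bratteli diagram and invoking Theorem~\ref{th:boundedamenable}, and part (2) from Theorem~\ref{th:freesubgroups} together with the recurrence supplied by Lemma~\ref{lem:finbdry}. The only detail worth adding in part (2) is to pass to a rank-two free subgroup before comparing with the Cayley graph, so that Lemma~\ref{lem:finbdry} applies to a finitely generated group and the Cayley graph is the locally finite $4$-regular tree.
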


In many cases it is easy to prove that the groups of germs $\Gr_w$ are
finite. Namely, the following proposition is straightforward
(see also the proof of~\cite[Theorem~4.4]{nek:free}).

\begin{proposition}
\label{pr:finiteisotropy}
Suppose that the sequence $|E_i|$ is bounded. Let $G$ be a group
generated by bounded automorphisms of $\xs$. If for every generator
$g$ of $G$ there exists a number $n$ such that the depth of every
finitary section $g|_v$ is less than $n$, then
the groups of germs $\Gr_w$ for $w\in\xo$ are locally finite.
Consequently, the group $G$ is amenable.
\end{proposition}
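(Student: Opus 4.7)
I reduce the claim to showing that for every $w\in\xo$, every finitely generated subgroup of $\Gr_w$ is finite. Any such subgroup lifts to a finitely generated $H=\langle h_1,\dots,h_k\rangle\leq G_w$, where each $h_i$ is a word in the generators of $G$ and hence is a bounded automorphism. Let $q=\sup_i|E_i|$ and let $N$ be a uniform bound on the finitary-depth of the generators of $G$ occurring in the $h_i$, and let $F_N$ denote the (finite) group of finitary automorphisms of depth less than $N$ on a tree with alphabet bounded by $q$.

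The first step is a uniform order bound: every germ $(g,w)$ with $g\in H$ has order dividing $\tilde N:=(q-1)!\cdot|F_N|$. Pick $m$ large enough that $g$ fixes $w_m$ and that, for every generator $h_i$, the only element of $P_{h_i}$ (if any) extending $w_m$ is $w$ itself. A direct expansion via the cocycle formula $(g_1g_2)|_v=g_1|_{g_2(v)}g_2|_v$ then shows that the section $g|_{w_m}\in\aut$ of the subtree at $w_m$ has the following structure: a first-level permutation $\sigma_m(g)$ fixing $w_{m+1}$ and of order dividing $(q-1)!$; sections $g|_{w_m e}$ for $e\neq w_{m+1}$ which expand as products of generator sections at vertices of the form $w_m e'$ with $e'\neq w_{m+1}$, each finitary of depth less than $N$, hence lying in $F_N$; and a section at $w_{m+1}$ equal to $g|_{w_{m+1}}$, of the same shape. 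Computing $(g|_{w_m})^{\tilde N}$ via the cocycle: the first-level permutation becomes trivial (as $\tilde N$ is a multiple of its order), each section at $e\neq w_{m+1}$ becomes a $|F_N|$-th power in $F_N$ and hence trivial, and the section at $w_{m+1}$ equals $(g|_{w_{m+1}})^{\tilde N}$, of the same recursive shape. Iterating, $(g|_{w_m})^{\tilde N}$ acts trivially on every point of the boundary of the subtree at $w_m$ except possibly $w^{(m)}=w_{m+1}w_{m+2}\cdots$, which it fixes as $g$ fixes $w$. Because the singleton $\{w^{(m)}\}$ has empty interior (using $|E_i|\geq 2$), continuity of the homeomorphism forces $(g|_{w_m})^{\tilde N}=1$. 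Consequently $g^{\tilde N}$ is trivial on the neighborhood $w_m\xo$ of $w$ and the germ $(g,w)$ has order dividing $\tilde N$.

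The second step is to deduce that the germ group $H_0\leq\Gr_w$ is finite. Choosing $M$ large enough that $(h_i|_{w_M})^{\tilde N}=1$ globally in the automorphism group of the subtree at $w_M$ for every generator $h_i$, the image $H^{(M)}=\{h|_{w_M}:h\in H\}$ is finitely generated of exponent dividing $\tilde N$, and its generators have the structural form described above. The natural surjections $H^{(M)}\twoheadrightarrow H^{(M+1)}\twoheadrightarrow\cdots\twoheadrightarrow H_0$ (where $H^{(m+1)}$ is the image of $H^{(m)}$ under sectioning at $w_{m+1}$) have kernels embedding into a finite group of size bounded in terms of $q$ and $N$. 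The main obstacle is to show that $H^{(M)}$ itself is finite; this requires combining the uniform exponent bound with the specific recursive ``one non-finitary direction plus finitary-depth-$<N$ everywhere else'' structure so as to rule out infinite Burnside-type quotients, and can be done by a recursive bookkeeping argument paralleling the proof of~\cite[Theorem~4.4]{nek:free}. Once $H^{(M)}$ is finite, $H_0$ is finite as a quotient, and the amenability of $G$ follows at once from Theorem~\ref{th:boundedtree}(1), since locally finite groups are amenable.
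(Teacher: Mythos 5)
There is a genuine gap, and you have in effect located it yourself: the finiteness of $H^{(M)}$ carries the entire content of the proposition, and your proposal does not prove it. What Step 1 establishes (modulo an imprecision: for a general $g\in H$ the level $m$ must be chosen to clear not the sets $P_{h_i}$ themselves but their translates under the partial products of the word representing $g$, so $m$ depends on $g$ and not only on the generators) is that every germ in the image of $H$ has order dividing $\tilde N$, i.e.\ that the germ group has bounded exponent. A finitely generated group of bounded exponent need not be finite, and your sentence ``rule out infinite Burnside-type quotients \dots\ by a recursive bookkeeping argument paralleling [nek:free]'' is precisely the missing step, not a proof of it; nothing from the uniform order bound feeds into it. Also note that $H^{(M)}$ is only shown to be generated by elements of order dividing $\tilde N$, not to have exponent $\tilde N$. (The paper itself offers no argument here beyond calling the statement straightforward, so the burden of supplying the key mechanism falls entirely on you.)

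The mechanism that closes the gap is an embedding into a direct product of uniformly bounded finite groups, not an exponent bound. Fix $M$ so large that each $h_i|_{w_M}$ fixes the tail $x_{M+1}x_{M+2}\cdots$ and has all sections at vertices off that tail finitary of depth $<N$, and let $K_M$ be the set of \emph{all} automorphisms of $\xs_{(M)}$ with this property. The cocycle rule \eqref{eq:section} shows $K_M$ is a group, and that for each $m'\ge M$ the map sending $h\in K_M$ to the pair consisting of the permutation it induces on the edges above the on-path vertex of level $m'$ together with the tuple of its (finitary, depth $<N$) sections at the off-path children is a homomorphism onto a finite group $W_{m'}$ with $|W_{m'}|\le (q-1)!\,|F_N|^{\,q-1}$. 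An element with trivial image in every $W_{m'}$ fixes every vertex, so $K_M$ embeds into $\prod_{m'\ge M}W_{m'}$. A direct product of finite groups of uniformly bounded order is locally finite: a $k$-generated subgroup is a subdirect product of $k$-generated marked groups of order at most $D$, of which there are only finitely many up to marked isomorphism, so it embeds into a finite product of representatives. Hence $K_M$ is locally finite, $H^{(M)}=\langle h_i|_{w_M}\rangle$ is finite, and its quotient $H_0$ is finite. Once this is in place your Step 1 becomes redundant, since local finiteness of $K_M$ already gives everything.
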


Amenability of groups satisfying the conditions of
Proposition~\ref{pr:finiteisotropy}
answers a question posed in~\cite{nek:free}. Below we show some concrete examples of groups satisfying the
conditions of Proposition~\ref{pr:finiteisotropy}.

\subsubsection{Finite automata of bounded activity growth}

\begin{defi}
\label{def:finitestate}
Suppose that the sequence $\alb=(E_1, E_2, \ldots)=(X, X, \ldots)$ is
constant, so that $\xs_{(n)}$ does not depend on $n$.
An automorphism $g\in\aut\xs$ is
\emph{finite-state} if the set
$\{g|_v\;:\;v\in\xs\}\subset\aut\xs$ is finite.
\end{defi}

The sequence $\alpha_n(g)$ from Definition~\ref{def:alphatree} for
finite-state automorphisms was studied by S.~Sidki
in~\cite{sid:cycl}. He showed that it is either bounded, or grows either as a polynomial of
some degree $d\in\mathbb{N}$, or grows exponentially (in fact, he
showed that the series $\sum_{n\ge 0}\alpha_n(g)x^n$ is rational). For
each $d$ the set $P_d(\xs)$ of automorphisms of $\xs$ for which $\alpha_n(g)$ grows as a
polynomial of degree at most $d$ is a subgroup of $\aut\xs$. He
showed later in~\cite{sidki:polynonfree} that these groups of
\emph{automata of polynomial activity growth} do not contain free
non-abelian subgroups. 

For different examples of subgroups of the groups of finite automata
of bounded activity, see~\cite[Section~1.D]{bkn:amenability} and
references therein.

It is easy to see that finite-state bounded automorphisms of the tree
$\xs$ satisfy the conditions of
Proposition~\ref{pr:finiteisotropy}. This proves, therefore, the
following.

\begin{theorem}
\label{th:boundedfinite}
The groups of finite automata of bounded activity growth are amenable.
\end{theorem}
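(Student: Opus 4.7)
The plan is to derive Theorem~\ref{th:boundedfinite} as a direct corollary of Proposition~\ref{pr:finiteisotropy}, by verifying that a finite generating set of a group $G$ of finite automata of bounded activity growth meets the three hypotheses of that proposition. No new ideas beyond unpacking definitions are needed; the substance of the argument was done already in Proposition~\ref{pr:finiteisotropy} (and, upstream, in Theorem~\ref{th:boundedamenable}).

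First, for a group of finite automata the underlying rooted tree corresponds to a constant sequence $\alb=(X, X, \ldots)$, so $|E_i|=|X|$ is automatically bounded. Second, ``bounded activity growth'' is exactly the condition of Definition~\ref{def:alphatree} that $\alpha_n(g)$ is bounded for each generator $g$, so $G$ embeds in the group of bounded automorphisms of $\xs$.

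The only real content is to verify the third hypothesis of Proposition~\ref{pr:finiteisotropy}: for each generator $g$ there exists $n$ such that every finitary section $g|_v$ has depth less than $n$. By Definition~\ref{def:finitestate}, the finite-state assumption says that the set $\Sigma(g)=\{g|_v : v\in\xs\}$ is finite. Every section of $g$ is an element of $\Sigma(g)$; among the finitely many elements of $\Sigma(g)$ that happen to be finitary, each carries a well-defined finite depth, and taking $n$ strictly greater than the maximum of these depths gives the required uniform bound. With all three hypotheses verified, Proposition~\ref{pr:finiteisotropy} yields that the isotropy groups $\Gr_w$ of the groupoid of germs are locally finite, hence amenable; Theorem~\ref{th:boundedamenable} (invoked inside the proof of Proposition~\ref{pr:finiteisotropy}) then gives amenability of $G$.

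There is no genuine obstacle: the theorem is a packaging statement recognizing ``finite-state'' as a convenient sufficient condition for the uniform-depth assumption in Proposition~\ref{pr:finiteisotropy}. The only step requiring any thought is why finite-stateness forces finitary sections to have uniformly bounded depth, and this is immediate from finiteness of $\Sigma(g)$ together with the section identity $g|_{v_1 v_2}=g|_{v_1}|_{v_2}$, which guarantees that sections at deeper vertices are themselves sections of $g$ and thus lie in $\Sigma(g)$.
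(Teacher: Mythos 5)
Your proof is correct and follows exactly the paper's route: the paper likewise deduces Theorem~\ref{th:boundedfinite} by observing that finite-state bounded automorphisms satisfy the hypotheses of Proposition~\ref{pr:finiteisotropy} (the paper leaves the verification as ``easy to see,'' and your argument via finiteness of the set of sections $\{g|_v : v\in\xs\}$ and the identity $g|_{v_1v_2}=g|_{v_1}|_{v_2}$ is precisely the intended one).
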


This theorem is the main result of the
paper~\cite{bkn:amenability}. It was proved there by embedding
all finitely generated groups of finite-state bounded automorphisms
into a sequence of self-similar ``mother groups'' $M_d$, and then
using self-similarity structure on $M_d$ and studying entropy of the
random walk on $M_d$. Similar technique was applied in~\cite{amirangelvirag:linear} to prove
that the group of automata with at most linear activity growth is also
amenable. We will prove this fact later using Theorem~\ref{th:amenhomeo}.

One of examples of groups generated by finite automata of bounded
activity growth is the Grigorchuk group~\cite{grigorchuk:80_en}. It is
the first example of a group of intermediate growth, and also the
first example of a non-elementary amenable group. It was used later to
construct the first example of a finitely presented non-elementary
amenable group in~\cite{grigorchuk:notEG}. An uncountable family of generalizations of the Grigorchuk group were
studied in~\cite{grigorchuk:growth_en}. They all satisfy the
conditions of Proposition~\ref{pr:finiteisotropy}, and all have
sub-exponential growth.

Another example is the \emph{Basilica group}, which is the iterated monodromy group of the
complex polynomial $z^2-1$, and is generated by two automorphisms $a,
b$ of the binary rooted tree given by the rules
\begin{alignat*}{2}
a(0v) &= 1v, &\qquad a(1v) &= 0b(v),\\
b(0v) &= 0v, &\qquad b(1v) &= 1a(v).
\end{alignat*}
It is easy to see that $\alpha_n(a)$ and $\alpha_n(b)$ are equal to 1
for all $n$, therefore, by Proposition~\ref{pr:finiteisotropy}, the
Basilica group is amenable.

It was defined for the first time in~\cite{zukgrigorchuk:3st} as a
group defined by a three-state automaton. The authors showed that this
group has no free subgroups, and asked if it is amenable. They also
showed that this group can not be constructed from groups of
sub-exponential growth using elementary operations preserving
amenability (it is not \emph{sub-exponentially amenable}). Amenability
of the Basilica group was proved, using self-similarity
and random walks, in~\cite{barthvirag}. It follows from the results
of~\cite{nek:book} and Theorem~\ref{th:boundedfinite}
that iterated monodromy groups of sub-hyperbolic polynomials are
amenable.

\subsubsection{Groups of Neumann-Segal type}

The conditions of Definition~\ref{def:finitestate} are very
restrictive. In particular, the set of all finite-state automorphisms
of $X^*$ is countable, whereas the set of all bounded automorphisms is
uncountable. There are many interesting examples of groups generated by bounded but
not finite-state automorphisms of $X^*$. For example, the groups from the uncountable
family of Grigorchuk groups~\cite{grigorchuk:growth_en} are generated
by bounded automorphisms of the binary rooted trees. They are of sub-exponential growth.

An uncountable family of groups of non-uniformly exponential growth
was constructed in~\cite{brieussel:nonuniform}. All groups of the
family (if the degrees of the vertices of the tree are bounded)
satisfy the conditions of Proposition~\ref{pr:finiteisotropy}, hence
are amenable. Amenability of these groups were proved
in~\cite{brieussel:nonuniform} using the techniques
of~\cite{barthvirag}.

Other examples are given by a construction used by P.~Neumann
in~\cite{pneumann:brex} and D.~Segal in~\cite{segal:branch}.

Let us describe a general version of D.~Segal's construction.
Let $(G_i, X_i)$, $i=0, 1, \ldots$, be a sequence of groups acting transitively on finite
sets $X_i$. Let $a_{i, j}\in G_i$ for $1\le j\le k$ be sequences of
elements such that $a_{i, 1}, a_{i, 2}, \ldots, a_{i, k}$ generate
$G_i$. Choose also points $x_i, y_i\in X_i$. Define then automorphisms
$\alpha_{i, j}, \beta_{i, j}$, $i=0, 1, \ldots$,
of the tree $\xs_{(j)}$ for the sequence $\alb_{(j)}=(X_i,
X_{i+1}, \ldots)$ given by the following recurrent rules:
\[\alpha_{i, j}(xw)=a_{i, j}(x)w,\]
\[\beta_{i, j}(xw)=\left\{\begin{array}{ll}x_i\beta_{i+1, j}(w) &
    \text{if $x=x_i$,}\\ y_i\alpha_{i+1, j}(w) & \text{if $x=y_i$,}\\
xw & \text{otherwise,}\end{array}\right.\]
where $w\in\xs_{(j+1)}$ and $x\in X_i$.

\begin{proposition}
\label{pr:neumansegaltype}
The group $G=\langle\alpha_{0, 1}, \ldots, \alpha_{0, k}, \beta_{0, 1},
\ldots, \beta_{0, k}\rangle$ is amenable if and only if the group
generated by the sequences $(a_{1, j}, a_{2, j},
\ldots)\in\prod_{i=1}^\infty G_i$, for $j=1, \ldots, k$, is amenable.
\end{proposition}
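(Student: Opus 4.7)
\emph{Reduction via Theorem~\ref{th:boundedtree}.} The plan is to apply Theorem~\ref{th:boundedtree} to the action of $G$ on $\xo$. Each $\alpha_{0,j}$ is finitary of depth $1$, and the recursions $\beta_{i,j}|_{x_i}=\beta_{i+1,j}$, $\beta_{i,j}|_{y_i}=\alpha_{i+1,j}$, and $\beta_{i,j}|_{z}=1$ for $z\ne x_i,y_i$, show that each $\beta_{0,j}$ has at most two non-trivial sections per level and is therefore bounded. The only points at which a generator fails to be locally finitary lie on the ray $w^*=x_0x_1x_2\cdots\in\xo$, so the singular set of $G$ is the orbit $G\cdot w^*$ and it suffices to show that the isotropy group $\Gr_{w^*}$ is amenable.

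\emph{Section identity.} Iterating the cocycle formula, the section of a word $\prod_k\beta_{0,j_k}^{\epsilon_k}$ at $x_0\cdots x_{n-1}$ equals $\prod_k\beta_{n,j_k}^{\epsilon_k}$. A branch-by-branch analysis of $\xs_{(n)}$ (on the branch at $y_r$ the automorphism acts by the finitary product $\prod_k\alpha_{r+1,j_k}^{\epsilon_k}$, which is trivial iff $\prod_k a_{r+1,j_k}^{\epsilon_k}=1$ in $G_{r+1}$; on the branch at $x_r$ one recurses one level deeper) then shows that this section is trivial as a tree automorphism iff $\prod_k a_{i,j_k}^{\epsilon_k}=1$ in $G_i$ for every $i\ge n+1$. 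Applied with $n=0$ the criterion gives that $h^{(j)}=(a_{1,j},a_{2,j},\ldots)\mapsto\beta_{0,j}$ is an isomorphism from $H$ onto the subgroup $B:=\langle\beta_{0,1},\ldots,\beta_{0,k}\rangle$ of $G$, so amenability of $G$ immediately passes to $H$. Applied with $n=1$ it gives that the analogous assignment $\iota\colon H\arr\Gr_{w^*}$, $h^{(j)}\mapsto[\beta_{0,j}]$, is a well-defined homomorphism.

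\emph{Surjectivity and conclusion.} To complete the forward direction I would show $\iota$ is surjective. For arbitrary $g=g_1\cdots g_m\in\mathrm{Stab}_G(w^*)$, expanding $g|_{x_0\cdots x_{n-1}}$ by the cocycle formula uses the intermediate paths $v_i(n):=g_{i+1}\cdots g_m(x_0\cdots x_{n-1})$: each $\alpha$-letter contributes the identity (since $\alpha$ is depth $1$), while a $\beta$-letter $g_i$ contributes $\beta_{n,j}^{\pm 1}$ when the tail $g_{i+1}\cdots g_m$ fixes $w^*$ (so that $v_i(n)=x_0\cdots x_{n-1}$) and otherwise a factor of type $\alpha_{n,j}^{\pm 1}$ or trivial, which itself becomes the identity once $n$ exceeds by one the level at which $g_{i+1}\cdots g_m(w^*)$ first deviates from $w^*$. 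For all sufficiently large $n$, therefore, $g|_{x_0\cdots x_{n-1}}$ equals the section $\sigma_n(b)$ where $b\in B$ is the product of the surviving $\beta$-letters of $g$, so $[g]=[b]$ in $\Gr_{w^*}$. This makes $\Gr_{w^*}$ a quotient of $H$, so amenability of $H$ yields amenability of $\Gr_{w^*}$ and hence of $G$ by Theorem~\ref{th:boundedtree}. The main obstacle is this surjectivity claim: one has to track the intermediate positions $v_i(n)$ carefully to argue that the finitary contributions introduced by the $\alpha$-letters in the cocycle expansion vanish in the germ at $w^*$ after enough further sections along the ray.
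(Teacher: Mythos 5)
Your proposal follows essentially the same route as the paper: observe that the $\alpha$'s are finitary and the $\beta$'s bounded so that Theorem~\ref{th:boundedtree} applies, note that the only nontrivial germ groups occur along the orbit of $x_0x_1\ldots$ and that the germ group there is a quotient of $\langle\beta_{0,1},\ldots,\beta_{0,k}\rangle\cong\prod$-group $H$, and use the embedding of $H$ as that subgroup for the converse. The paper merely asserts the quotient claim ``from the description of the sections,'' whereas you spell out the cocycle bookkeeping behind it; the substance is identical and your argument is correct.
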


\begin{proof}
The group generated by the sequences $(a_{1, j}, a_{2, j},
\ldots)$ is isomorphic to the group generated by $\beta_{0, 1},
\beta_{0, 2}, \ldots, \beta_{0, k}$. Consequently, if this group is
non-amenable, then $G$ is non-amenable too.

The automorphisms $\alpha_{i, j}$ are finitary. All sections of
$\beta_{i, j}$ are finitary except for the sections in finite
beginnings of the sequence $x_ix_{i+1}\ldots$. It follows that
$\beta_{i, j}$ are bounded, and the groups of germs the action of $G$
on $\xo$ are isomorphic to the group of germs of $G$ at the point
$x_0x_1\ldots$. It also follows from the description of the sections
that the group of germs of $G$ at $x_0x_1\ldots$ is a quotient of
$\langle\beta_{0, 1}, \beta_{0, 2}, \ldots, \beta_{0,
  k}\rangle$. Then Theorem~\ref{th:boundedtree} finishes the proof.
\end{proof}

The examples considered by P.~Neumann in~\cite{pneumann:brex} are
similar, but they are finite-state, so their amenability
follows from Theorem~\ref{th:boundedfinite}. The main examples of
D.~Segal~\cite{segal:branch} are non-amenable (they are constructed
for $G_i$ equal to $PSL(2, p_i)$ for an increasing sequence of primes $p_i$).

A.~Woryna in~\cite{woryna:cyclic} and
E.~Fink in~\cite{fink:nofree} consider the case when $G_i$ are cyclic
groups (of variable order). A.~Woryna uses the corresponding group $G$
to compute the minimal size of a topological
generating set of the profinite infinite wreath product of groups
$G_i$. E.~Fink shows that if the orders of $G_i$ grow sufficiently
fast, then the group $G$ is of exponential growth, but
does not contain free subgroups. Proposition~\ref{pr:neumansegaltype}
immediately implies that such groups are amenable (for any sequence of
cyclic groups).

I.~Bondarenko used in~\cite{bondarenko:fingeneration} the above
construction in the general case to study number of topological
generators of the profinite infinite wreath product of permutation groups.

\subsubsection{Iterated monodromy groups of polynomial iterations}
Two uncountable families of groups generated by bounded automorphisms were studied
in~\cite{nek:ssfamilies} in relation with holomorphic dynamics.
Let us describe one of them. For a sequence $w=x_1x_2\ldots$, $x_i\in\{0,
1\}$ define $s(w)=x_2x_3\ldots$, and let $\alpha_w, \beta_w,
\gamma_w$ be automorphisms of $\{0, 1\}^*$ defined by
\begin{alignat*}{3}
\alpha_w(0v)&=1v, &\qquad \gamma_w(0v)&=0v, &\qquad \beta_w(0v) &=0\alpha_{s(w)}(v), \\
 \alpha_w(1v)&=0\gamma_{s(w)}(v),&\qquad \gamma_w(1v)&=1\beta_{s(w)}(v),&\qquad \beta_w(1v)&=1v,
\end{alignat*}
if $x_1=0$, and
\[\beta_w(0v)=0v, \qquad \beta_w(1v)=1\alpha_{s(w)}(v),\]
if $x_1=1$, where $s(w)=x_2x_3\ldots$.

The automorphisms $\alpha_w, \beta_w, \gamma_w$ are obviously
bounded. Note that $\alpha_w, \beta_w, \gamma_w$ are finite-state if and only
if $w$ is eventually periodic. It is easy to see that the groups of germs of the action
of the group $R_w=\langle\alpha_w, \beta_w, \gamma_w\rangle$ on the
boundary of the binary tree are trivial. Therefore, it follows from
Theorem~\ref{th:boundedtree} that the groups $G_w$ are amenable.

It is shown in~\cite{nek:ssfamilies} that the sets of isomorphism classes
of groups $R_w$, for $w\in\{0, 1\}^{\mathbb{Z}}$ are countable, and
that the map $w\mapsto R_w$ is a homeomorphic embedding of the Cantor
space of infinite binary sequences into the space of three-generated
groups.

The groups $R_w$ are iterated monodromy groups of sequences of
polynomials of the form $f_n(z)=1-\frac{z^2}{p_n^2}$, where $p_n\in\mathbb{C}$
satisfy $p_n=1-\frac{1}{p_{n+1}^2}$, $n=0, 1, \ldots$. In general, let $f_1, f_2, \ldots$ be a sequence of complex polynomials seen as
maps
\[\mathbb{C}\stackrel{f_1}{\longleftarrow}\mathbb{C}\stackrel{f_2}{\longleftarrow}
\mathbb{C}\stackrel{f_3}{\longleftarrow}\cdots.\]

Denote by $P_n$ the union of the sets of critical values (i.e., values
at critical points) of the polynomials $f_n\circ
f_{n+1}\circ\cdots\circ f_{n+k}$ for all $k\ge 0$.
Note that $P_{n+1}\subseteq f^{-1}_{n}(P_n)$. We say that the sequence is \emph{post-critically finite} if
the set $P_1$ is finite.
In this case, we get a sequence of covering maps
\[M_1\stackrel{f_1}{\longleftarrow}M_2\stackrel{f_2}{\longleftarrow}M_3\stackrel{f_3}{\longleftarrow}
\cdots\]
where $M_1=\mathbb{C}\setminus P_1$ and $M_n=(f_1\circ\cdots\circ
f_{n-1})^{-1}(M_1)$. Choose a basepoint $t\in M_1$, and consider the
fundamental group $\pi_1(M_1, t)$.

The disjoint union
\[T=\{t\}\cup\bigsqcup_{n\ge 1}(f_1\circ\cdots\circ f_n)^{-1}(t)\]
has a natural structure of a rooted tree, where $t$ is the root, and a
vertex $z\in(f_1\circ\cdots\circ f_n)^{-1}(t)$ is connected to
$f_n(z)\in (f_1\circ\cdots\circ f_{n-1})^{-1}(t)$.

The fundamental group $\pi_1(M_1, t)$ acts on each level of the tree
by the monodromy action. Taken together, these actions become an
action of $\pi_1(M_1, t)$ on the tree $T$ by automorphisms. Denote by
$IMG(f_1, f_2, \ldots)$ the quotient of $\pi_1(M_1, t)$ by the kernel
of the action. This group is called the \emph{iterated monodromy
  group} of the sequence. For example, Basilica group is the iterated monodromy group of the
constant sequence $f_n(z)=z^2-1$. Here $P_1=\{0, -1\}$. Iterated monodromy groups of such sequences of polynomials were
studied in the article~\cite{nek:polynom}. The following statement is a direct
corollary of its results.

\begin{proposition}
Suppose that $|P_n|$ is a bounded sequence. Then there exists an
isomorphism of the tree $T$ with a tree $\xs$ conjugating
$IMG(f_1, f_2, \ldots)$ with a group acting on $\xs$ by bounded automorphisms.
\end{proposition}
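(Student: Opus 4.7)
The plan is to realize the tree $T$ as a regular rooted tree $\xs$ on which the standard generators of $IMG(f_1,f_2,\ldots)$ act by bounded automorphisms in the sense of Definition~\ref{def:alphatree}; the hypothesis $|P_n|=O(1)$ will supply the uniform bound on $\alpha_n$.

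First I would set up the identification $T\cong \xs$. Since $t\in M_1=\C\setminus P_1$ and every critical value of the composition $f_1\circ\cdots\circ f_n$ lies in $P_1$, the basepoint $t$ is a regular value of each such composition. Hence every vertex of $T$ at level $n-1$ has exactly $\deg(f_n)$ preimages under $f_n$, so $T$ is spherically homogeneous. Choosing a planar spider at level one — a system of disjoint arcs from $t$ to the points of $P_1$ — together with its consistent lifts under $f_1\circ\cdots\circ f_n$ for every $n$, one obtains a combinatorial rooted-tree isomorphism $T\cong\xs$ with $\alb=(X_1,X_2,\ldots)$ and $|X_n|=\deg(f_n)$.

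Next, use as a finite generating set of $IMG(f_1,f_2,\ldots)$ the images of the small loops $\ell_p\in\pi_1(M_1,t)$ around the points $p\in P_1$. By the standard recursive formula for sections in iterated monodromy groups of polynomials (developed for a single polynomial in \cite{nek:book} and extended to sequences of polynomials in \cite{nek:polynom}), the section $\ell_p|_v$ at a vertex $v\in\alb^n$ is either trivial or, under the identification of the subtree at $v$ with the tree of preimages for the shifted sequence $(f_{n+1},f_{n+2},\ldots)$, conjugate to a small loop $\ell_q$ around a single point $q\in P_{n+1}$. Such a $q$ must be a preimage of $p$ under $f_1\circ\cdots\circ f_n$ which is simultaneously a critical value of $f_{n+1}$, and tracing the lift backward through the spider recovers $v$ from $q$. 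Consequently the number of $v\in\alb^n$ with $\ell_p|_v\ne 1$ is bounded by a constant times $|P_{n+1}|$, hence uniformly bounded in $n$ by hypothesis. Therefore each $\ell_p$ is a bounded automorphism of $\xs$, which is what the proposition asserts.

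The main technical obstacle is making the recursion for sections precise for a \emph{sequence} of polynomials rather than a single one: one must choose the spiders at all levels in a compatible way and bookkeep how small loops around $P_n$ lift through $f_n$ into small loops around $P_{n+1}$. This is exactly the content of the computations carried out in \cite{nek:polynom}, from which the proposition follows as a direct corollary once the identification $T\cong\xs$ is in place.
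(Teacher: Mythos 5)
Your outline matches the paper's treatment: the paper states this proposition without proof, as a direct corollary of the computations in \cite{nek:polynom}, and those computations are exactly the spider/section recursion you describe (choose compatible systems of connecting arcs at every level so that the section of a small loop $\ell_p$ at $v\in\alb^n$ is trivial unless the lift winds around a post-critical point). One small correction: a vertex $v\in\alb^n$ can carry a nontrivial section $\ell_p|_v$ whenever the corresponding preimage $q\in(f_1\circ\cdots\circ f_n)^{-1}(p)$ lies in $P_{n+1}$, i.e.\ is a critical value of some composition $f_{n+1}\circ\cdots\circ f_{n+k}$ with $k\ge 0$, not only when $q$ is a critical value of $f_{n+1}$ itself --- but the resulting count is still at most $|P_{n+1}|$, so your uniform bound and the conclusion stand.
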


It is also not hard to understand the structure of the groups of germs
of the action of $IMG(f_1, f_2, \ldots)$ on the boundary of the
tree. In particular, one can show for many sequences (using
Proposition~\ref{pr:finiteisotropy})
that they are finite.

\begin{examp}
Consider an arbitrary sequence $f_n(z)$, $n\ge 0$, such that
$f_n(z)=z^2$ or $f_n(z)=1-z^2$. Then the sequence is post-critically
finite with the post-critical sets $P_n$ subsets of $\{0, 1\}$. The
iterated monodromy group of such a sequence is generated by
automorphisms $a_w, b_w$, for some sequence $w=x_1x_2\ldots\in\{0,
1\}^\infty$, where
\[a_w(0v)=1v,\quad a_w(1v)=0b_{s(w)}(v),\quad b_w(0v)=0v,\quad
b_w(1v)1a_{s(w)}(v),\]
if $x_1=0$,
and
\[a_w(0v)=1v,\quad a_w(1v)=0a_{s(w)}(v),\quad b_w(0v)=0v,\quad
b_w(1v)1b_{s(w)}(v),\]
otherwise. All these groups are amenable by Proposition~\ref{pr:finiteisotropy}.
\end{examp}

\subsection{Bratteli-Vershik transformations and minimal homeomorphisms}

A \emph{Bratteli-Vershik} diagram is a Bratteli diagram
$\mathsf{D}=((V_n)_{n\ge 1}, (E_n)_{n\ge 1}, \be, \en)$ together with
a linear order on each of the sets $\en^{-1}(v)$, $v\in\bigcup_{n\ge
  2}V_n$. A path $v\in\paths_n$, for $n=1, 2, \ldots, \infty$, is said to be
\emph{minimal} (resp.\ \emph{maximal}) if it consists only of minimal
(resp.\ maximal) edges. Note that for every vertex $v\in\bigcup_{n\ge 1}V_n$ of a
Bratteli-Vershik diagram there exist unique maximal and minimal paths $v_{\max},
v_{\min}\in\paths_v$. Let $(e_1, e_2, \ldots)\in\paths$ be a non-maximal path in the
diagram. Let $n$ be the smallest index such that $e_n$ is
non-maximal. Let $e_n'$ be the next edge after $e_n$ in
$\en^{-1}(\en(e_n))$, and let $(e_1', e_2', \ldots, e_{n-1}')$ be the
unique minimal path in $\paths_{\be(e_n')}$. Define then
\[a(e_1, e_2, \ldots)=(e_1', e_2', \ldots, e_{n-1}', e_n', e_{n+1},
e_{n+2}, \ldots).\]
The map $a$ is called the \emph{adic transformation} of the
Bratteli-Vershik diagram.

The adic transformation is a continuous map from the set of non-maximal elements of
$\paths$ to $\paths$. In fact, it is easy to see that it is a
homeomorphism from the set of non-maximal paths to the set of
non-minimal paths. The inverse map is the adic transformation defined
by the opposite ordering of the diagram. If the diagram has a unique maximal and a unique minimal infinite
paths, then we can extend $a$ to a homeomorphism $\paths\arr\paths$ by
mapping the maximal path to the minimal path.

The following theorem is proved in~\cite{putn:bratel}.

\begin{theorem}
Every minimal homeomorphism of the Cantor set (i.e., a homeomorphism
for which every orbit is dense) is topologically conjugate to the adic
transformation defined by a Bratteli-Vershik diagram.
\end{theorem}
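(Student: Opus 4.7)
The plan is to realize $T$ as an adic transformation by producing a sequence of nested Kakutani--Rokhlin partitions of $\X$. This follows the Herman--Putnam--Skau strategy: each partition encodes the first-return dynamics to a shrinking neighborhood of a fixed base point, and the combinatorial refinement pattern between successive partitions is exactly the data of a Bratteli-Vershik diagram.

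First, I fix a point $x_0 \in \X$ together with a decreasing basis of clopen neighborhoods $U_1 \supset U_2 \supset \cdots$ with $\bigcap_n U_n = \{x_0\}$. Since $T$ is minimal, the first-return map $r_n \colon U_n \to \mathbb{N}$ defined by $r_n(y) = \min\{k \ge 1 : T^k y \in U_n\}$ is well defined and, by continuity together with compactness of $\X$, takes only finitely many values. Partitioning $U_n$ into the clopen sets $B_{n,v}$ on which $r_n$ is constant, with height $h_{n,v}$, yields a Kakutani--Rokhlin decomposition
\[
\X = \bigsqcup_{v}\bigsqcup_{j=0}^{h_{n,v}-1} T^{j}B_{n,v}.
\]
I then pass to a subsequence of the $U_n$ chosen small enough that each floor of the level-$(n+1)$ partition lies inside a single floor of the level-$n$ partition, so the sequence of partitions refines.

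From this refining sequence I build a Bratteli-Vershik diagram $\mathsf{D}$: the vertex set $V_n$ is the index set of the towers at level $n$, and for each floor of each level-$(n+1)$ tower I add an edge in $E_n$ from the vertex naming the level-$n$ tower that contains that floor to the vertex naming the level-$(n+1)$ tower. The linear order on $\en^{-1}(v')$ is the natural order in which the floors of the level-$(n+1)$ tower $v'$ are visited as $j$ runs from $0$ to $h_{n+1,v'}-1$. Conjugating $T$ to the adic transformation now amounts to defining $\Phi \colon \X \to \paths(\mathsf{D})$ that reads off, for each $y \in \X$ and each $n$, the ``floor within tower'' data as an edge of $E_n$. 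The refinement property makes this a well-defined, injective, continuous map between compact Hausdorff totally disconnected spaces, hence a homeomorphism, which sends the unique minimal (respectively maximal) path to $x_0$ (respectively to $T^{-1}x_0$).

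The verification that $\Phi \circ T = a \circ \Phi$ is a direct comparison: applying $T$ to a point in floor $j$ of a level-$n$ tower moves it to floor $j+1$ when $j+1 < h_{n,v}$, and otherwise reenters $U_n$ at the next base point in the ordering, which is exactly the rewiring of edges described by the adic transformation $a$. The principal technical obstacle is the refinement step: one must choose the basis $(U_n)$ with enough care that level-$(n+1)$ towers respect the floor-by-floor structure of level-$n$ towers, and simultaneously that $x_0$ sits at the bottom of every level-$n$ sub-tower containing it while $T^{-1}x_0$ sits at the top. This is standard but requires passing to a judicious subsequence of clopen neighborhoods at each stage; once it is done, the rest of the argument is combinatorial bookkeeping.
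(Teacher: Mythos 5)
The paper does not prove this theorem; it quotes it from the cited reference, and your Kakutani--Rokhlin strategy is exactly the Herman--Putnam--Skau argument given there. The outline is the right one, but two steps as you wrote them do not work.

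First, the edge set is mis-specified. You put one edge of $E_n$ into a vertex $v'\in V_{n+1}$ for \emph{each floor} of the level-$(n+1)$ tower $v'$. For the path space to be in bijection with $\X$ you need the number of paths ending at $v'$ to equal the height $h_{n+1,v'}$, and this holds by induction precisely when the edges into $v'$ are indexed by the occurrences of level-$n$ sub-towers inside $v'$ (equivalently, by the floors of $v'$ lying in the base $U_n$, i.e.\ the visits to $U_n$ before the first return to $U_{n+1}$): then the number of paths into $v'$ is $\sum_i h_{n,v_i}=h_{n+1,v'}$. With one edge per floor you get $h_{n+1,v'}$ edges into $v'$, the consistency condition $\en(e_n)=\be(e_{n+1})$ no longer forces the floors named by successive edges to be nested, and the path space contains many paths corresponding to no point of $\X$; your $\Phi$ cannot be onto. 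The repair is purely a matter of re-indexing, but it is not the diagram you described.

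Second, injectivity of $\Phi$ does not follow from the refinement property together with $\bigcap_n U_n=\{x_0\}$. The floors of the level-$n$ partition are the sets $T^jB_{n,v}$ with $0\le j<h_{n,v}$; shrinking the base makes $B_{n,v}$ small but controls nothing about the diameter of $T^jB_{n,v}$ for large $j$, since $T^j$ is not uniformly contracting, so these partitions need not separate points. One must additionally intersect the level-$n$ towers with a sequence of clopen partitions generating the topology of $\X$, splitting the base sets $B_{n,v}$ so that the tower structure is preserved; only then does $\bigvee_n\mathcal{P}_n$ exhaust the clopen algebra and $\Phi$ become injective. With these two repairs, the intertwining $\Phi\circ T=a\circ\Phi$ and the identification of the unique minimal and maximal paths with $x_0$ and $T^{-1}x_0$ go through as you describe.
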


See also~\cite{medynets:aperiodic} where Vershik-Bratteli diagrams are
used to describe \emph{aperiodic} homeomorphisms, i.e., homeomorphisms
without finite orbits.

It follows directly from the definition of the adic transformation $a$
that $\alpha_v(a)=1$ for every vertex $v$ of the diagram, and that
germs of $a$ belong to $\tail$ for all points $w\in\Omega$ except for
the unique maximal path. It is also
obvious that the $a$ has no fixed points, hence the groups of germs of
the group generated by $a$ are trivial. Using
Theorem~\ref{th:boundedamenable}, we get a proof of
following result of~\cite{juschenkomonod} by K.~Juschenko and N.~Monod.

\begin{theorem}
\label{th:juschenkomonod}
Let $a$ be a minimal homeomorphism of the Cantor set. Then the
topological full group of the groupoid of germs of $\langle a\rangle$
is amenable.
\end{theorem}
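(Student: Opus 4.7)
The strategy is to realize $a$ as an adic transformation and apply Theorem~\ref{th:boundedamenable} to each finitely generated subgroup of $[[\Gr]]$. By the theorem of Putnam cited just above, after conjugation we may assume that $a$ is the adic transformation of a Bratteli--Vershik diagram $\mathsf{D}$ acting on $\X=\paths(\mathsf{D})$. As noted immediately before the statement, this gives $\alpha_v(a)=1$ for every vertex $v$, with germs of $a$ lying in $\tail$ at every point except the unique maximal path. Moreover, minimality of $a$ on the infinite Cantor set $\X$ prevents any power $a^n$ with $n\neq 0$ from having a fixed point, so the isotropy groups $\Gr_x$ of the groupoid $\Gr$ of germs of $\langle a\rangle$ are all trivial.

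Since amenability is preserved under direct limits, it is enough to show that every finitely generated subgroup $G\le[[\Gr]]$ is amenable. For such a $G$ the groupoid of germs is a subgroupoid of $\Gr$ and therefore has trivial (hence amenable) isotropy, so the only outstanding hypothesis of Theorem~\ref{th:boundedamenable} is that every $F\in[[\Gr]]$ be a homeomorphism of bounded type in the sense of Definition~\ref{def:alphav}.

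To verify this, fix $F\in[[\Gr]]$. Compactness of $\X$ together with the definition of $[[\Gr]]$ produces a finite clopen partition $\X=\bigsqcup_{i=1}^m U_i$ and integers $n_1,\dots,n_m$ with $F|_{U_i}=a^{n_i}|_{U_i}$. A direct inspection of Definition~\ref{def:alphav} shows sub-additivity $\alpha_v(hh')\le\alpha_v(h)+\alpha_v(h')$ under composition, and combining this with $\alpha_v(a)=1$ gives $\alpha_v(a^{n_i})\le|n_i|$ for every $v$. Choose $N$ so that each $U_i$ is a union of cylinders $u\paths$ with $u$ a path of length $N$. For any vertex $v$ at a level greater than $N$, every cylinder $w\paths$ with $w\in\paths_v$ is contained in a single $U_i$, which yields $\alpha_v(F)\le\sum_i|n_i|$; for vertices $v$ at levels at most $N$ the finite set $\paths_v$ supplies a uniform bound since only finitely many such $v$ exist. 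The set of points where the germ of $F$ fails to lie in $\tail$ is a finite union of the (finite) corresponding sets for the $a^{n_i}$. Thus $F$ is of bounded type, Theorem~\ref{th:boundedamenable} applies, and $G$ is amenable.

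The only technical step requiring genuine work is the uniform control of $\alpha_v(F)$ for a piecewise power of $a$; everything else is immediate from the facts collected just before the statement.
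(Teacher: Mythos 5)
Your proof is correct and follows essentially the same route as the paper: realize $a$ as an adic transformation via Putnam's theorem, observe that $\alpha_v(a)=1$, that the only singular point is the maximal path, and that minimality forces trivial isotropy, then invoke Theorem~\ref{th:boundedamenable}. The only difference is that you explicitly verify that every element of $[[\Gr]]$ is of bounded type (via the clopen partition into powers of $a$ and subadditivity of $\alpha_v$), a step the paper delegates to its remark that the hypotheses of Theorem~\ref{th:amenhomeo} are local and therefore pass to the full group.
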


It follows from the results of~\cite{medynets:aperiodic} that the same
result is true for all aperiodic homeomorphisms of the Cantor set. Theorem~\ref{th:juschenkomonod} provides the first examples of simple
finitely generated amenable groups. In fact, there are uncountably
many pairwise non-isomorphic full groups of minimal homeomorphisms of
the Cantor set, see~\cite{juschenkomonod}.

\subsubsection{One-dimensional tilings}
Adic transformations and subgroups of their full groups
naturally appear in the study of one-dimensional
aperiodic tilings, see~\cite{bellissardjuliensavinien}.
We present here one example, called \emph{Fibonacci tiling}.

Consider the endomorphism $\psi$ of the free monoid $\langle a, b\rangle$
defined by $a\mapsto ab,\qquad b\mapsto a$. Then $\psi^n(a)$ is beginning of $\psi^{n+1}(a)$, so that we can
pass to the limit, and get a right-infinite sequence
\[\psi^\infty(a)=abaababaabaababaababaabaababaabaab\ldots\]

Let $\mathcal{F}$ be the set of bi-infinite sequences $w\in\{a, b\}^{\mathbb{Z}}$ over the
alphabet $\{a, b\}$ such that every finite subword of $w$ is a
sub-word of $\psi^\infty(a)$. The set $\mathcal{F}$ is obviously
invariant under the shift, which we will denote by $\tau$. Here the
shift acts by the rule
\[\ldots x_{-2}x_{-1}.x_0x_1\ldots\mapsto \ldots x_{-1}x_0.x_1x_2,\]
where dot shows the place between the coordinate number -1 and
coordinate number 0.

Every letter $b$ in a sequence $w\in\mathcal{F}$ is uniquely grouped with the
previous letter $a$. Replace each group $ab$ by $a$, and each of the
remaining letters $a$ by $b$. Denote the new sequence by
$\sigma(w)=\ldots y_{-2}y_{-1}.y_0y_1\ldots$, so that $y_0$
corresponds to the group that contained $x_0$. It follows from the
definition of $\mathcal{F}$ that $\sigma(w)\in\mathcal{F}$.

\emph{Symbol} $\alpha(w)$ of a sequence $\ldots
x_{-2}x_{-1}.x_0x_1\ldots\in\mathcal{F}$ is an element of $\{a_0, a_1,
b\}$ defined by the following conditions:
\begin{enumerate}
\item $\alpha(w)=a_0$ if $x_{-1}x_0=aa$;
\item $\alpha(w)=a_1$ if $x_{-1}x_0=ba$;
\item $\alpha(w)=b$ if $x_0=b$.
\end{enumerate}

\emph{Itinerary} of $w\in\mathcal{F}$ is the sequence
$\alpha(w)\alpha(\sigma(w))\alpha(\sigma^2(w))\ldots$. We have the
following description of $\mathcal{F}$, which easily follows from the
definitions.

\begin{proposition}
\label{pr:fibonacci}
A sequence $w\in\mathcal{F}$ is uniquely determined by its
itinerary. A sequence $x_1x_2\ldots\in\{a_0, a_1, b\}^\infty$ is an
itinerary of an element of $\mathcal{F}$ if and only if
$x_ix_{i+1}\in\{a_0a_1, a_1a_0, a_1b, ba_0, ba_1\}$. The shift $\tau$
acts on $\mathcal{F}$ in terms of itineraries by the rules:
\begin{eqnarray*}
\tau(a_0w) &=& bw,\quad \tau(bw)= a_1\tau(w)\\
\tau(a_1w) &=& \left\{\begin{array}{ll} b\tau(w) & \text{if $w$
      starts with $a_0$,}\\
a_0\tau(w) & \text{if $w$ starts with $b$,}
\end{array}\right.
\end{eqnarray*}
\end{proposition}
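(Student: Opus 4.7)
The plan is to prove the three assertions in turn by a case analysis on $\alpha(w) \in \{a_0, a_1, b\}$. The starting observation is that $\mathcal{F}$ inherits from $\psi^\infty(a)$ the property that neither $bb$ nor $aaa$ occurs as a factor, which pins down the local picture near position~$0$: $\alpha(w)=a_0$ forces $w_{-1}w_0w_1 = aab$ (to avoid $aaa$), so $\{w_{-1}\}$ is a singleton group and $\{w_0,w_1\}$ is an $ab$-group; $\alpha(w)=a_1$ forces $w_{-2}w_{-1}w_0 = aba$ (to avoid $bb$), with both $w_1=a$ and $w_1=b$ possible; and $\alpha(w)=b$ forces $w_{-1}w_0 = ab$ and $w_1 = a$, while $w_{-2}$ can be either $a$ or $b$.

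Uniqueness I would prove by induction on the level: since the tail $\alpha(\sigma w)\alpha(\sigma^2 w)\ldots$ of the itinerary is the itinerary of $\sigma(w)$, the induction hypothesis recovers $\sigma(w)$, and then $w$ is recovered from $\sigma(w)$ by applying $\psi$ coordinatewise and concatenating the resulting groups, using $\alpha(w)$ to align position~$0$ within its group (the sole letter if $(\sigma w)_0=b$; the first letter of an $ab$-group if $(\sigma w)_0=a$ and $\alpha(w)\in\{a_0,a_1\}$; the second letter of an $ab$-group if $\alpha(w)=b$). For the allowed transitions, I would compute $\alpha(\sigma w)$ in each of the three cases above (using that a singleton $\{a\}$ maps under $\sigma$ to $b$ and an $ab$-group maps to $a$) and read off: $a_0 \to a_1$; $a_1 \to a_0$ (when $w_1=b$) or $a_1 \to b$ (when $w_1=a$); $b \to a_0$ (when $w_{-2}=b$) or $b \to a_1$ (when $w_{-2}=a$), matching the five listed pairs. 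The converse direction would follow by running the reconstruction in reverse: building ever larger neighbourhoods of position~$0$ at each level, with the admissible transitions ensuring the limit has no $bb$ or $aaa$ factor, hence lies in $\mathcal{F}$.

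For the shift rules, the key observation is that the group partition depends only on the letters of a sequence, so the groups of $\tau(w)$ coincide as sets of letters with those of $w$ (merely relabelled by a shift of positions). Accordingly $\sigma(\tau(w))$ will equal $\sigma(w)$ when $w_0$ and $w_1 = (\tau w)_0$ lie in the same $ab$-group, and equal $\tau(\sigma(w))$ otherwise. Combining this with the case analysis would yield, for each value of $\alpha(w)$, both $\alpha(\tau w)$ and the precise relation between $\sigma(\tau w)$ and $\sigma(w)$, from which the formulas of the proposition follow by expressing the itinerary of $\tau w$ as $\alpha(\tau w)$ concatenated with the itinerary of $\sigma(\tau w)$.

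The main technical obstacle is the $a_1$ case, whose two sub-cases must be identified carefully. In the sub-case $w_1=b$, the pair $w_0 w_1 = ab$ forms a single group, so $\sigma(\tau w)=\sigma(w)$ and $\alpha(\sigma w)=a_0$; in the sub-case $w_1=a$, the letter $w_0$ is a singleton and $w_1 w_2 = ab$ is a new group, so $\sigma(\tau w)=\tau(\sigma w)$ and $\alpha(\sigma w)=b$. The two sub-cases are therefore exactly distinguished by whether the tail of the itinerary starts with $a_0$ or $b$, which is the dichotomy appearing in the proposition; once this correspondence is in place, the shift formulas follow by direct substitution.
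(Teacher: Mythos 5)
Your strategy is the right one, and in fact the paper offers no proof to compare against: Proposition~\ref{pr:fibonacci} is simply asserted to ``follow easily from the definitions.'' Your local analysis of the three symbols (using that $bb$ and $aaa$ never occur), the resulting five admissible transitions, and the dichotomy $\sigma(\tau w)=\sigma(w)$ versus $\sigma(\tau w)=\tau(\sigma w)$ according to whether $(\tau w)_0=w_1$ lies in the same group as $w_0$, are all correct, and the uniqueness/realizability arguments by growing windows around position $0$ are the standard and appropriate ones.

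There is, however, one point you should not pass over. In the sub-case $\alpha(w)=a_1$, $w_1=b$ (equivalently, the tail $u$ of the itinerary starts with $a_0$), your own computation gives $\alpha(\tau w)=b$ and $\sigma(\tau w)=\sigma(w)$, hence the itinerary of $\tau w$ is $b$ followed by the \emph{unchanged} tail: $\tau(a_1u)=bu$. This does \emph{not} agree with the printed formula $\tau(a_1w)=b\tau(w)$: substituting $w=a_0v$ into the printed rule yields $b\tau(a_0v)=bbv$, which begins with the pair $bb$ forbidden by the transition rules asserted in the very same proposition. The statement as printed is therefore internally inconsistent, and the correct rule is $\tau(a_1w)=bw$ when $w$ starts with $a_0$ (parallel to $\tau(a_0w)=bw$, the other case in which $\sigma$ is unchanged). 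Your derivation is the correct one; the flaw in your write-up is only that you claim the formulas of the proposition ``follow by direct substitution'' when your argument actually establishes a corrected variant of one of them. You should say so explicitly rather than assert agreement.

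A minor imprecision: in the sub-case $w_1=a$ you write that ``$w_1w_2=ab$ is a new group,'' but $w_2$ may equal $a$, in which case $\{w_1\}$ is a singleton group; what matters, and what remains true in either case, is only that $w_1$ begins a new group, so that $\sigma(\tau w)=\tau(\sigma w)$.
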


Restrictions $\tau_{a_0}$, $\tau_{a_1}$, $\tau_b$ of $\tau$ to the
cylindrical sets  of sequences starting with $a_0$, $a_1$, $b$,
respectively, generate a \emph{self-similar inverse semigroup} in the
sense of~\cite{nek:smale}. The range of each of these
transformations does not intersect with the domain. Therefore, we may
define homeomorphisms $\alpha_0, \alpha_1, \beta$ equal to
transformations $\tau_{a_0}\cup\tau_{a_0}^{-1}$,
$\tau_{a_1}\cup\tau_{a_1}^{-1}$, $\tau_b\cup\tau_b^{-1}$ extended
identically to transformations of the space $\mathcal{F}$.

It is easy to see that the orbital Schreier graphs of the action of
the group $\langle\alpha_0, \alpha_1, \beta\rangle$ on $\mathcal{F}$
coincide with the corresponding two-sided sequences. Namely, the
Schreier graph of a point $\ldots x_{-1}.x_0x_1\ldots$ is isomorphic to
a chain of vertices $(\ldots, w_{-1}, w_0, w_1, \ldots)$, where
$w_0=w$, and the edge $(w_i, w_{i+1})$ corresponds to the generator
$\alpha_0, \alpha_1$, or $\beta$ if and only if $x_{i-1}x_i$ is equal to
$aa, ba$, or $ab$, respectively.
It follows from Theorem~\ref{th:juschenkomonod} that the group
$\langle\alpha_0, \alpha_1, \beta\rangle$ is amenable.

\subsection{An example related to the Penrose tilings}

Consider isosceles triangles formed by two diagonals and a side, and
two sides and a diagonal of a regular pentagon. Mark their vertices by
white and black dots, and orient one of the sides as it is shown on
Figure~\ref{fig:match}.
\begin{figure}
\centering
\includegraphics[scale=0.6]{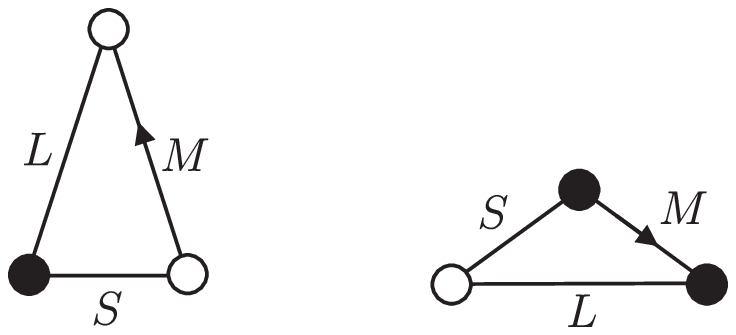}$\qquad\qquad\qquad\qquad$\includegraphics[scale=0.6]{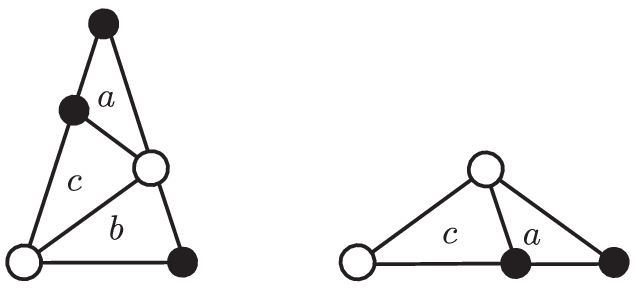}
\caption{Tiles of Penrose tilings and grouping of tiles}
\label{fig:match}
\end{figure}

A \emph{Penrose tiling} is a tiling of the plane by such triangles
obeying \emph{matching rules}: if two triangles have a common vertex,
then this vertex must be marked by dots of the same color; if two
triangle have a common side, then either they both are not oriented,
or they are both oriented and orientations match.

One can show, just considering all possible sufficiently big finite
Penrose tilings, that tiles of any Penrose tilings can be grouped into
blocks as on Figure~\ref{fig:match}. The blocks are triangles similar to the
original tiles (with similarity coefficient $(1+\sqrt{5})/2$), and the
new tiles also form a Penrose tiling, which we call \emph{inflation}
of the original tiling.

Given a Penrose tiling with a marked tile, consider the sequence of
iterated inflations of the tiling, where in each tiling the tile
containing the original marked tile is marked. Let $x_1x_2\ldots$ be
the corresponding \emph{itinerary}, where $x_i\in\{a, b, c\}$ is the
letter describing position of the marked tile of the $(i-1)$st
inflation in the tile of the $i$th inflation according to the rule
shown on Figure~\ref{fig:match}. A sequence is an itinerary of a
marked Penrose tiling if and only if it does not contain a subword
$ba$. More on the inflation and itineraries of the Penrose tilings,
see~\cite{tilings}.

Given a marked Penrose tiling with itinerary $w=x_1x_2\ldots$, denote
by $L(w)$, $S(w)$, $M(w)$ the itineraries of the same tiling in which
a neighboring tile is marked, where the choice of the neighbor is
shown on Figure~\ref{fig:match}. One can show by considering sufficiently big finite patches of the
Penrose tilings (see also~\cite{bgn,nek:ssinv}) that the transformations $L$, $S$,
and $M$ on the space of itineraries is given by the rules
\[S(aw)=cw,\quad S(bw)=bM(w),\quad S(cw)=aw,\quad L(cw)=cL(w),\]
\[L(aw)=\left\{\begin{array}{ll}bS(w) & \text{if $w$ starts with
      $a$,}\\ aM(w) & \text{otherwise,}\end{array}\right.
L(bw)=\left\{\begin{array}{ll}bS(w) & \text{if $w$ starts with $b$,}\\
aS(w) & \text{if $w$ starts with $c$,}\end{array}\right.\quad
\]
\[M(aw)=aL(w),\quad M(bw)=cw,\quad M(cw)=\left\{\begin{array}{ll}cM(w)
    & \text{if $w$ starts with $a$,}\\ bw &
    \text{otherwise.}\end{array}\right.\]

The orbital Schreier graphs of the action of the group $\langle L, S,
M\rangle$ coincide then with the graphs dual to the Penrose tilings
(except in the exceptional cases when the tilings have a non-trivial
symmetry group; then the dual graphs are finite coverings of the
Schreier graphs).

Let us redefine the transformations $L$, $S$, $M$,
trivializing the action on some cylindrical sets, so that in some
cases they correspond to moving the marking to a neighboring tile, but
sometimes they correspond to doing nothing. Namely, define new
transformations $L', S', M'$ by the rules.
\[S'(aw)=cw,\quad S'(bw)=bM'(w),\quad S'(cw)=aw,\]
\[L'(aw)=\left\{\begin{array}{ll}bS'(w) & \text{if $w$ starts with
      $a$,}\\ aw & \text{otherwise,}\end{array}\right.\]
\[L'(bw)=\left\{\begin{array}{ll}bS'(w) & \text{if $w$ starts with $b$,}\\
aS'(w) & \text{if $w$ starts with $c$,}\end{array}\right.\quad
L'(cw)=cL'(w),\]
\[M'(aw)=aw,\quad M'(bw)=cw,\quad M'(cw)=\left\{\begin{array}{ll}cM'(w)
    & \text{if $w$ starts with $a$,}\\ bw &
    \text{otherwise.}\end{array}\right.\]
Then the Schreier graphs of $\langle L', S', M'\rangle$ are subgraphs
of the graphs dual to the Penrose tilings. A piece of such Schreier
graph is shown on Figure~\ref{fig:penrose}.

\begin{figure}
\centering
\includegraphics{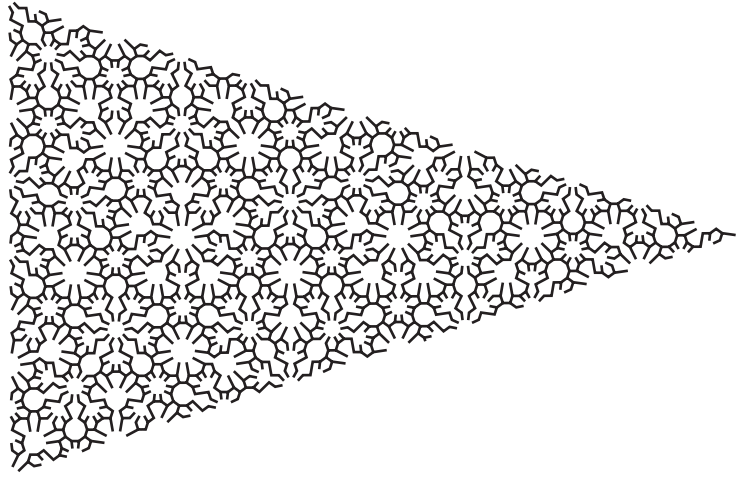}
\caption{A part of the Schreier graph of $\langle L', S', M'\rangle$}
\label{fig:penrose}
\end{figure}

It is easy to see that $M'$, and hence $S'$ are finitary. It follows
that $L'$ is bounded. Further analysis shows that the groups of germs
are finite, hence the group $\langle L', S', M'\rangle$ is amenable.

\section{Unbounded examples}
\label{s:unbounded}
In the previous section we applied Theorem~\ref{th:amenhomeo} in the case when the full
group $[[\Hr]]$ is locally finite. Here we present some other examples
of applications of Theorem~\ref{th:amenhomeo}.

\subsection{Linearly and quadratically growing automata}
Recall that $P_d(X)$, for $d=1, 2, \ldots$, denotes the group of
finite-state automorphisms $g$ of $\xs$ such that $\alpha_n(g)$ is
bounded by a polynomial of degree $d$. In particular, $P_1(\alb)$ is
the group of finite-state automorphisms of \emph{linear activity growth}. Here
$\alpha_n(g)$, as before, is the number of words $v$ of length $n$
such that $g|_v\ne 1$.

For example, consider the group generated by the following two
transformations of $\mathbb{Z}$.
\[a:n\mapsto n+1,\qquad b:2^m(2k+1)\mapsto 2^m(2k+3),\quad b(0)=0,\]
where $n, k, m\in\mathbb{Z}$ and $m\ge 0$.

The Schreier graph of this action, and the orbital Schreier graphs of
the associated group acting on a rooted tree were studied
in~\cite{benjaminihoffman,bcdn:schreier}.

The following theorem was proved in~\cite{amirangelvirag:linear}. We
present here a short proof based on Theorem~\ref{th:amenhomeo}.

\begin{theorem}
\label{th:linearamenable}
For every finite alphabet $X$ the group $P_1(X)$ is amenable.
\end{theorem}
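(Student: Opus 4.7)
The plan is to apply Theorem~\ref{th:amenhomeo} to a finitely generated subgroup $G \leq P_1(X)$ with symmetric generating set $S$, taking the auxiliary groupoid to be the tail equivalence groupoid $\mathcal{H} = \tail$ of the constant-alphabet Bratteli diagram defining $\xo$. Condition~(1) is then immediate: $[[\tail]]$ is locally finite, hence amenable.

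For condition~(2), fix a generator $s \in S$. The germ $(s, x)$ belongs to $\tail$ precisely when there is some $n$ with $s|_{x_1 \cdots x_n} = 1$, so the singular set of $s$ consists of those infinite rays in the Moore diagram of $s$ all of whose prefix-sections are nontrivial. By Sidki's structural theorem~\cite{sid:cycl} for polynomial-activity finite-state automata, the linear stratum of the Moore diagram of $s$ is a disjoint union of simple cycles, while the bounded stratum has uniformly bounded level sets; together these two facts bound the number of such infinite rays by a finite constant.

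Condition~(3) is where the linear growth hypothesis is used. Repeating the argument of Lemma~\ref{lem:finbdry} with a $\tail$-orbit transversal $P$ in the $G$-orbit $G(w)$ and $F_n = \bigcup_{u \in P} F_{n, u}$, the only change is in the estimate on the boundary: instead of the uniform bound on $\alpha_v(s)$ used in the bounded-activity case, one now has the linear bound $\alpha_n(s) = O(n)$, which yields $|\partial_S F_n| \leq |P| \cdot \sum_{s \in S} \alpha_n(s) = O(n)$. Since $\sum_{n \geq 1} 1/n = \infty$, the Nash-Williams criterion (Theorem~\ref{th:nashwilliams}) still delivers recurrence of $\Gamma(S, w)$.

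The main obstacle will be condition~(4), amenability of the isotropy groups $\Gr_x$ of the action of $G$. At any $x \in \xo$ whose $G$-orbit avoids the finite union $\Sigma$ of singular rays of the generators, every germ of $G$ at $x$ lies in $\tail$, and since the isotropy of $\tail$ at any point is trivial --- a tail-map fixing a point $x$ must act as the identity on a neighborhood of $x$ --- the isotropy $\Gr_x$ is trivial. Since isotropy groups depend only on the $G$-orbit of $x$, only finitely many remain to be verified, corresponding to representatives in $\Sigma$; for each such point, a direct analysis using the cyclic structure of the linear stratum of the Moore diagrams of the generators should show that $\Gr_x$ is virtually abelian, hence amenable. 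This last piece is the delicate step; the other three conditions follow cleanly from Sidki's structural theorem and the Nash-Williams criterion. Once all four conditions are in place, Theorem~\ref{th:amenhomeo} yields amenability of $G$, and since every finitely generated subgroup of $P_1(X)$ is amenable, so is $P_1(X)$.
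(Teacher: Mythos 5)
Your choice of auxiliary groupoid $\Hr=\tail$ is where the argument breaks down, and the failure is at condition~(2) of Theorem~\ref{th:amenhomeo}. For an automorphism $g$ of linear activity growth the set of points $w\in\xo$ at which $(g,w)\notin\tail$ is the set of infinite rays all of whose prefix-sections are nontrivial, and this set is in general \emph{infinite}. Concretely, take $X=\{0,1\}$, let $a$ be the binary adding machine ($a(0w)=1w$, $a(1w)=0a(w)$) and let $g$ be defined by $g(0w)=0g(w)$, $g(1w)=1a(w)$. Then $\alpha_n(g)=n+1$, and every ray $0^k1^\infty$ ($k\ge 0$), as well as $0^\infty$, has all of its prefix-sections nontrivial. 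Your appeal to Sidki's structure theorem misses that each passage around the linear cycle spawns a fresh entry into the bounded stratum, and each such entry contributes at least one further ray that is singular for $\tail$; the number of such rays is therefore not bounded by a constant. This infirmity propagates: since $\Sigma$ is infinite, your reduction of condition~(4) to ``finitely many remaining orbits'' does not go through, the claim that the remaining isotropy groups are virtually abelian is unsubstantiated, and in your condition~(3) estimate the finiteness of the $\tail$-orbit transversal $P$ inside a $G$-orbit is no longer guaranteed (in the paper it is deduced precisely from the finiteness of the singular set).

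The paper's proof avoids all of this by bootstrapping: it takes $\Hr$ to be the groupoid of germs of $P_0(X)$, the group of \emph{bounded} finite-state automorphisms, whose full group is amenable by Theorem~\ref{th:boundedfinite} together with Proposition~\ref{pr:fullgroupamenabilitytree}. After replacing $X$ by $X^N$ so that each nontrivial section of a generator reproduces itself along a unique letter, the points where a germ of $g\in P_1(X)$ fails to be a germ of a bounded automorphism form a \emph{finite} subset of the set $\Phi$ of eventually constant sequences (in the example above only $0^\infty$ is singular relative to $P_0(X)$: at $0^k1^\infty$ the germ of $g$ equals a germ of a conjugate of $a$). The isotropy group at such a point is shown to embed into $P_0(X)\wr\symm(X\setminus\{x\})$ --- amenable because $P_0(X)$ is, not because it is virtually abelian --- and recurrence is needed only at these singular points, where it is obtained by applying Nash--Williams to the sets $F_n$ of orbit points that are constant from coordinate $n$ on, whose boundaries have size $O(n)$. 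Your Nash--Williams computation is in the right spirit, but the skeleton of the argument has to be rebuilt around $\Hr=$ germs of $P_0(X)$ rather than $\tail$.
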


\begin{proof}
Let us investigate the groups of germs of finitely generated subgroups
of $P_d(X)$. Let $S$ be a finite subset of $P_d(X)$, and $G=\langle S\rangle$. We may assume that $S$ is
\emph{state-closed}, i.e., that $g|_x\in S$ for all $x\in X$ and $g\in
S$. Replacing $X$ by $X^N$ for some integer $N$, 
we may assume that the elements of $S$ satisfy the following
condition. For every element $g \ne 1$ of the form $g=h|_x$, for $h\in S$ and
$x\in X$, there exists a unique letter $x_g\in X$
such that $g|_{x_g}=g$, for all other letters $x\in X$ the section
$g|_x$ has degree of activity growth lower than that of $g$ (in
particular, it is finitary if $g$ is bounded and trivial if $g$ is
finitary). This fact follows from the structure of automata of
polynomial activity growth~\cite{sid:cycl}, see also~\cite[proof of
Theorem~3.3]{bkn:amenability}.

In particular, it follows that for every $g\in P_d(X)$ the set of
points $w\in\xo$ such that the germ $(g, w)$ is not a germ of an
element of $P_{d-1}(X)$ is finite.

Let $w = x_1 x_2\ldots \in \xo$, and consider the isotropy group
$\Gr_w$ of the groupoid of germs of $G$. Denote by
$\Phi$ be the set of all eventually constant sequences. If $w$ does
not belong to $\Phi$, our assumptions on $S$ imply that the germ
$(g, w)$ of every $g\in S$ (and hence every $g \in G$)
belongs to the groupoid of germs of the finitary group. 
This implies that $\Gr_w$ is trivial. 
Assume now that $w$ belongs to $\Phi$, say $x_n=x$ for all $n$
large enough. The sequence $g|_{x_1x_2\ldots x_n}$ is eventually
constant for every $g \in S$ (and hence in $G$ by
\eqref{eq:section}). In particular if $g\in G$ fixes
$w$, then for all $y\neq x$ the sequence $g|_{x_1x_2\ldots x_ny}$ is eventually
constant and belongs to $P_{d-1}(X)$. We therefore get a homomorphism
from the stabilizer of $w$ in $G$ into the wreath product
$P_{d-1}(X)\wr\symm(X\setminus\{x\})$ with kernel the elements of $G$
that act trivially on a neighborhood of $w$. This shows that $\Gr_w$
embeds in $P_{d-1}(X)\wr\symm(X \setminus \{x\})$.

Now Theorem~\ref{th:amenhomeo} shows that the amenability of $G$ will
follow from amenability of $P_{d-1}(X)$
and recurrence of the orbital Schreier graphs of $G$. We have proved in
Theorem~\ref{th:boundedfinite} that $P_0(X)$ is amenable. Let us prove
amenability of finitely generated subgroups of $P_1(X)$, i.e.,
amenability of $P_1(X)$.

If $w_0 \notin \Phi$, then $w_0$ is not singular in the sense of
Theorem~\ref{th:amenhomeo}, i.e., germs of
elements of $P_1(X)$ at $w_0$ are equal to germs of some elements of
$P_0(X)$. 

We therefore only have to show that the action of $G$ on the orbit of
an element $w_0$ of $\Phi$ is recurrent. Let $F_n\subset\Phi$ be the
set of all sequences $w=x_1x_2\ldots\in\xo$ in the $G$-orbit of
$w_0$ such that
 $x_i=x_j$ for all $i, j>n$. Suppose that
$w\in\partial_SF_n$ for $n>1$. Then $w=vx^\omega$ for some $|v|=n$ and $x\in X$,
and there exists $g\in S$ such that $g(w)\notin F_n$, i.e.,
$g|_v(x^\omega)$ is not a constant sequence. This implies that
$g|_v\ne 1$, and $g|_{vx}\ne g|_v$, hence $g|_{vx}$ is bounded. Then either
$g|_v(x^\omega)$ is of the form $y_1y^\omega$, for $y_1, y\in X$; or $g|_{vxx}$ is
finitary, so that $g|_v(x^\omega)$ is of the form $y_1y_2y_3x^\omega$,
for $y_i\in X$. It follows that $|\partial_SF_n|\le|X|\sum_{g\in
  S}\alpha_n(g)$ and that every element of $g(\partial_SF_n)$ for
$g\in S$, belongs to
$F_{n+3}$. Then $\partial_SF_{3n}$ are disjoint, $\cup_n F_n = G(\omega_0)$ and $|\partial_SF_{3n}|$
is bounded from above by a linear function, which implies by
Theorem~\ref{th:nashwilliams} that the Schreier graph of $G$ on the orbit of $\omega_0$ is recurrent.
\end{proof}

Note that arguments of the proof of Theorem~\ref{th:linearamenable}
imply the following.

\begin{theorem}
\label{th:quadraticgrowth}
If $G\le P_2(X)$ is a finitely generated group with recurrent Schreier
graphs of the action on $\xo$, then $G$ is amenable.
\end{theorem}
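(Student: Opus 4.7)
My plan is to apply Theorem~\ref{th:amenhomeo} to $G$ with $\Hr$ taken as the groupoid of germs of the action of $P_1(X)$ on $\xo$. The whole argument will parallel the proof of Theorem~\ref{th:linearamenable}: the only ingredient that was specific to $d=1$ in that proof was the direct derivation (from the linear activity bound and Theorem~\ref{th:nashwilliams}) of recurrence of the Schreier graphs, and here that ingredient is provided by hypothesis.

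After enlarging the alphabet to $X^N$ for a suitable $N$, I would normalize a finite symmetric generating set $S$ of $G$ so that it is state-closed and each $s \in S$ has the standard structure of a polynomial-activity automaton from~\cite{sid:cycl}: every nontrivial section $g = h|_v$ with $h \in S$ has at most one letter $x_g \in X$ satisfying $g|_{x_g} = g$, with all other sections of strictly lower polynomial degree of activity. Applying this structural statement with $d = 2$, one obtains, as in the proof of Theorem~\ref{th:linearamenable}, that for every $g \in P_2(X)$ only finitely many $w \in \xo$ have germ $(g, w)$ outside $\Hr$, which is hypothesis~(2). The same analysis identifies the isotropy groups $\Gr_w$ of the groupoid of germs of $G$: they are trivial if $w$ is not eventually constant, and if $w$ is eventually equal to some $x \in X$, they embed into the wreath product $P_1(X) \wr \symm(X \setminus \{x\})$. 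Since $P_1(X)$ is amenable by Theorem~\ref{th:linearamenable} and amenability is preserved under wreath products with finite groups, hypothesis~(4) holds. Hypothesis~(3) is precisely the standing assumption.

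The remaining and main obstacle is hypothesis~(1), the amenability of $[[\Hr]]$. Since $P_1(X)$ itself is not finitely generated, the remark following Theorem~\ref{th:amenhomeo} does not apply to it in one step. I would instead verify amenability of an arbitrary finitely generated subgroup $F \le [[\Hr]]$. By compactness of $\xo$, each generator of $F$ coincides locally with one of finitely many elements of $P_1(X)$; collecting these elements over all generators of $F$ gives a finitely generated subgroup $G_0 \le P_1(X)$ with $F \le [[\Gr_{G_0}]]$, where $\Gr_{G_0}$ denotes the groupoid of germs of $G_0$. The proof of Theorem~\ref{th:linearamenable} verifies all hypotheses of Theorem~\ref{th:amenhomeo} for $G_0$ --- including the crucial recurrence of its Schreier graphs, which is the point where the linear-activity Nash--Williams estimate is used --- so the remark after Theorem~\ref{th:amenhomeo} yields that $[[\Gr_{G_0}]]$ is amenable. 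Hence $F$ is amenable, and, passing to the direct limit, $[[\Hr]]$ is amenable. With all four hypotheses verified, Theorem~\ref{th:amenhomeo} gives the amenability of $G$.
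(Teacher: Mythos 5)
Your proposal is correct and is essentially the paper's own argument: the paper proves Theorem~\ref{th:quadraticgrowth} simply by noting that the proof of Theorem~\ref{th:linearamenable} goes through for $d=2$ once recurrence of the Schreier graphs is supplied as a hypothesis, taking $\Hr$ to be the groupoid of germs of $P_1(X)$, whose full group is amenable because $P_1(X)$ is (via Proposition~\ref{pr:fullgroupamenabilitytree}, or equivalently your direct-limit reduction to finitely generated $G_0\le P_1(X)$). The one point to state with care is that passing to a state-closed generating set may strictly enlarge $G$, and recurrence does not pass to overgroups; so the normalization should only be used to establish conditions (2) and (4) --- which are facts about individual elements of $P_2(X)$ and about subgroups of the enlarged group's isotropy groups --- after which Theorem~\ref{th:amenhomeo} is applied to the original $G$ with its original generators, exactly as implicitly done in the paper.
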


It was announces in~\cite{amirvirag:higherdegree} that the Schreier
graphs of the ``mother group'' of automata of quadratic activity
growth are recurrent. This implies, by
Theorem~\ref{th:quadraticgrowth}, that these groups are amenable,
which in turn implies amenability of $P_2(X)$ for every finite
alphabet $X$.

On the other hand, it is shown in~\cite{amirvirag:higherdegree}
that the Schreier graphs of some groups
generated by automata of growth of activity of degree greater than 2
are transient.

\subsection{Holonomy groups of H\'enon maps}

A \emph{H\'enon map} is a polynomial map
$f:\mathbb{C}^2\arr\mathbb{C}^2$ given by
\[f(x, y)=(x^2+c-ay, x),\]
where $a, c\in\mathbb{C}$, $a\ne 0$. Note that $f$ is invertible, and its
inverse is $f^{-1}(x, y)=(y, (y^2+c-x)/a)$.

For a given H\'enon map $f$, let $K_+$ (resp.\ $K_-$) be the set of points
$p\in\mathbb{C}^2$ such that $f^n(p)$ is bounded as $n\to+\infty$
(resp.\ as $n\to-\infty$). Let $J_+$ and $J_-$ be the boundaries of
$K_+$ and $K_-$, respectively. The \emph{Julia set} of $f$ is the set
$J_+\cap J_-$.

In some cases the H\'enon map is \emph{hyperbolic} on a neighborhood
of its Julia set, i.e., the Julia set can be decomposed locally into a
direct product such that $f$ is expanding one and contracting the
other factors of the decomposition. The contracting direction will be
totally disconnected. See~\cite{ishii:hyperbolicI,ishii:hyperbolicII} for examples
and more detail. In this case we have a naturally defined holonomy pseudogroup of the
contracting direction of the local product decomposition of the Julia
set. Fixing a subset $C$ of the contracting direction homeomorphic to the
Cantor set, we can consider the group $H_C$ of all homeomorphisms of $C$
belonging to the holonomy pseudogroup, or a subgroup of $H_C$ such that
its groupoid of germs coincides with the holonomy groupoid. General
theory of holonomy pseudogroups of hyperbolic H\'enon maps (also their
generalizations of higher degree) is developed in~\cite{ishii:img}.

One example of computation of the holonomy groupoid is given in~\cite{oliva:phd}.
It corresponds to the H\'enon maps with parameter values $a=0.125$,
$c=-1.24$.

The corresponding group acts by homeomorphisms on the space $\{0,
1\}^\omega$ of binary infinite sequences, and is generated by
transformations $\alpha, \beta, \gamma, \tau$, defined by the
following recurrent rules:
\begin{alignat*}{2}
\alpha(0w) &=1\alpha^{-1}(w), &\qquad\alpha(1w)&=0\beta(w),\\
\beta(0w) &=1\gamma(w), &\qquad\beta(1w) &=0\tau(w),\\
\gamma(00w)&=11\tau^{-1}(w), &\qquad\gamma(11w)&=00\tau(w),\\
\gamma(10w)&=10w, &\qquad\gamma(01w)&=01w,\\
\tau(0w) &=1w, &\qquad\tau(1w) &= 0\tau(w).
\end{alignat*}
Note that $G=\langle\alpha, \beta, \gamma, \tau\rangle$ does not act by
homeomorphisms on the tree $\{0, 1\}^*$. The recurrent rules follow
from the automaton shown on~\cite[Figure~3.8]{oliva:phd}.

It is easy to check that the generators $\alpha, \beta$ have
linearly growing activity (i.e., the sequence from
Definition~\ref{def:alphav} for these homeomorphisms are bounded by a
linear function). The generators $\gamma, \tau$ are of bounded
type. The groups of germs of $G$ are trivial. Therefore,
Theorem~\ref{th:amenhomeo} applied for $\Hr$ equal to the groupoid of
germs of $\langle\gamma, \tau\rangle$ shows that $G$ is amenable.

\subsection{Mating of two quadratic polynomials}

Let $f(z)=z^2+c=z^2-0.2282\ldots+1.1151\ldots i$ be the quadratic polynomial
such that $f^3(0)$ is a fixed point of $f$. It is easy to see that the
last condition means that $c$ is a root of the polynomial
$x^3+2x^2+2x+2$. Let us compactify the complex plane $\mathbb{C}$ by a
circle at infinity (adding points of the form $+\infty\cdot
e^{i\theta}$) and extend the action of $f$ to the obtained disc $\mathbb{D}$
so that $f$ acts on circle at infinity by angle doubling. Take two
copies of $\mathbb{D}$ with $f$ acting on each of them, and then glue
them to each other along the circles at infinity using the map
$+\infty\cdot e^{i\theta}\mapsto +\infty\cdot e^{-i\theta}$. We get
then a branched self-covering $F$ of a sphere. This self-covering is a
\emph{Thurston map}, i.e., orbit of every critical point of $F$ (there
are two of them in this case) are finite. For more about this map in
particular, its relation to the paper-folding curve, and for
the general \emph{mating} procedure, see~\cite{milnor:dragons}.

The iterated monodromy group of $F$ was computed
in~\cite{nek:filling}. It is generated by the following five
automorphisms of the binary tree $\{0, 1\}^*$:
\begin{alignat*}{2}
a(0w) &= 1w, &\qquad a(1w) &= 0w, \\
b &= (1, a), &\qquad b' &= (cc'abb', 1),\\
c &=(c, b), &\qquad c' &=(c', b'),
\end{alignat*}
where $g=(g_0, g_1)$ means that $g(0w)=0g_0(w)$ and $g(1w)=1g_1(w)$.

It is shown in~\cite{nek:filling} that the subgroup generated by $a,
B=bb'$, and $C=cc'$ has a subgroup of index two isomorphic to
$\mathbb{Z}^2$. In particular, it is amenable, and Schreier graphs of
its action on $\{0, 1\}^\omega$ are recurrent. It is also shown
(see~\cite[Proposition~6.3]{nek:filling}) that the Schreier graphs of
the whole group $\mathrm{IMG}(F)=\langle a, b, c, b', c'\rangle$
coincide with the Schreier graphs of the virtually abelian
subgroup $H=\langle a, B, C\rangle$.
Namely, if $b(v)\ne v$, $b'(v)\ne v$, $c(v)\ne v$, or
$c'(v)\ne v$, then $b(v)=B(v)$, $b'(v)=B(v)$, $c(v)=C(v)$, or
$c'(v)=C(v)$, respectively. A picture showing how the Schreier graph
of $\langle a, b, c\rangle$ is embedded into the Schreier graph of $H$
can be found in~\cite[Figure 15]{nek:filling}.

It follows from the definition of the generators of $\mathrm{IMG}(F)$
that all germs of $b$ and $b'$ belong to the groupoid of germs $\Hr$ of
$H$. Consequently, all germs of $c$ and $c'$ except for the germ at
the point $000\ldots$ belong to $\Hr$. It follows from
Theorem~\ref{th:amenhomeo} and Proposition~\ref{pr:fullgroupamenabilitytree}
that the group $\mathrm{IMG}(F)$ is amenable.

\bibliographystyle{amsalpha}
\bibliography{nekrash,mymath}

\providecommand{\bysame}{\leavevmode\hbox to3em{\hrulefill}\thinspace}
\providecommand{\MR}{\relax\ifhmode\unskip\space\fi MR }
\providecommand{\MRhref}[2]{%
  \href{http://www.ams.org/mathscinet-getitem?mr=#1}{#2}
}
\providecommand{\href}[2]{#2}
\begin{thebibliography}{BCSDN12}

\bibitem[AAV13]{amirangelvirag:linear}
Gideon Amir, Omer Angel, and B\'alint Vir\'ag, \emph{Amenability of
  linear-activity automaton groups}, Journal of the European Mathematical
  Society \textbf{15} (2013), no.~3, 705--730.

\bibitem[AV11]{amirvirag:higherdegree}
Gideon Amir and Balint Virag, \emph{Positive speed for high-degree automaton
  groups}, (preprint, arXiv:1102.4979), 2011.

\bibitem[BCSDN12]{bcdn:schreier}
Ievgen Bondarenko, Tullio Ceccherini-Silberstein, Alfredo Donno, and Volodymyr
  Nekrashevych, \emph{On a family of {S}chreier graphs of intermediate growth
  associated with a self-similar group}, European J. Combin. \textbf{33}
  (2012), no.~7, 1408--1421.

\bibitem[BGN03]{bgn}
Laurent Bartholdi, Rostislav Grigorchuk, and Volodymyr Nekrashevych, \emph{From
  fractal groups to fractal sets}, {Fractals in Graz 2001. Analysis -- Dynamics
  -- Geometry -- Stochastics} (Peter Grabner and Wolfgang Woess, eds.),
  {Birkh\"auser Verlag, Basel, Boston, Berlin}, 2003, pp.~25--118.

\bibitem[BH05]{benjaminihoffman}
Itai Benjamini and Christopher Hoffman, \emph{{$\omega$}-periodic graphs},
  Electron. J. Combin. \textbf{12} (2005), Research Paper 46, 12 pp.
  (electronic).

\bibitem[BJS10]{bellissardjuliensavinien}
Jean Bellissard, Antoine Julien, and Jean Savinien, \emph{Tiling groupoids and
  {B}ratteli diagrams}, Ann. Henri Poincar\'e \textbf{11} (2010), no.~1-2,
  69--99. \MR{2658985 (2011g:37036)}

\bibitem[BKN10]{bkn:amenability}
Laurent Bartholdi, Vadim Kaimanovich, and Volodymyr Nekrashevych, \emph{On
  amenability of automata groups}, Duke Mathematical Journal \textbf{154}
  (2010), no.~3, 575--598.

\bibitem[Bon07]{bondarenko:phd}
Ievgen Bondarenko, \emph{Groups generated by bounded automata and their
  {Schreier} graphs}, {PhD} dissertation, Texas {A\& M} University, 2007.

\bibitem[Bon10]{bondarenko:fingeneration}
Ievgen~V. Bondarenko, \emph{Finite generation of iterated wreath products},
  Arch. Math. (Basel) \textbf{95} (2010), no.~4, 301--308.

\bibitem[Bra72]{bratteli:af}
Ola Bratteli, \emph{Inductive limits of finite-dimensional {$C^*$}-algebras},
  Transactions of the American Mathematical Society \textbf{171} (1972),
  195--234.

\bibitem[Bri09]{brieussel:nonuniform}
J{\'e}r{\'e}mie Brieussel, \emph{Amenability and non-uniform growth of some
  directed automorphism groups of a rooted tree}, Math. Z. \textbf{263} (2009),
  no.~2, 265--293.

\bibitem[BV05]{barthvirag}
Laurent Bartholdi and B\'alint Vir\'ag, \emph{Amenability via random walks},
  Duke Math. J. \textbf{130} (2005), no.~1, 39--56.

\bibitem[Day57]{day:semigroups}
Mahlon~M. Day, \emph{Amenable semigroups}, Illinois J. Math. \textbf{1} (1957),
  509--544.

\bibitem[dC13]{cornulier:pleinstopologiques}
Yves de~Cornulier, \emph{Groupes pleins-topologiques, d'après {Matui,
  Juschenko, Monod,\ldots}}, (written exposition of the Bourbaki Seminar of
  January 19th, 2013, available at
  {\verb!http://www.normalesup.org/~cornulier/plein.pdf!}), 2013.

\bibitem[ER06]{exelrenault:AF}
R.~Exel and J.~Renault, \emph{{$AF$}-algebras and the tail-equivalence relation
  on {B}ratteli diagrams}, Proc. Amer. Math. Soc. \textbf{134} (2006), no.~1,
  193--206 (electronic).

\bibitem[Fin12]{fink:nofree}
Elisabeth Fink, \emph{A finitely generated branch group of exponential growth
  without free subgroups}, (preprint arXiv:1207.6548), 2012.

\bibitem[GNS00]{grineksu_en}
Rostislav~I. Grigorchuk, Volodymyr~V. Nekrashevich, and Vitali{\u\i}~I.
  Sushchanskii, \emph{Automata, dynamical systems and groups}, Proceedings of
  the Steklov Institute of Mathematics \textbf{231} (2000), 128--203.

\bibitem[Gri80]{grigorchuk:80_en}
Rostislav~I. Grigorchuk, \emph{On {Burnside's} problem on periodic groups},
  Functional Anal.\ Appl. \textbf{14} (1980), no.~1, 41--43.

\bibitem[Gri83]{grigorchuk:milnor_en}
Rostislav~I. Grigorchuk, \emph{Milnor's problem on the growth of groups}, Sov.
  Math., Dokl. \textbf{28} (1983), 23--26.

\bibitem[Gri85]{grigorchuk:growth_en}
Rostislav~I. Grigorchuk, \emph{Degrees of growth of finitely generated groups
  and the theory of invariant means}, Math.\ USSR Izv. \textbf{25} (1985),
  no.~2, 259--300.

\bibitem[Gri98]{grigorchuk:notEG}
\bysame, \emph{An example of a finitely presented amenable group that does not
  belong to the class {E}{G}}, Mat. Sb. \textbf{189} (1998), no.~1, 79--100.

\bibitem[GS87]{tilings}
Branko Gr{\"u}nbaum and Goeffrey~C. Shephard, \emph{Tilings and patterns},
  W.~H. Freeman and Company, New York, 1987.

\bibitem[G{\.Z}02]{zukgrigorchuk:3st}
Rostislav~I. Grigorchuk and Andrzej {\.Z}uk, \emph{On a torsion-free weakly
  branch group defined by a three state automaton}, Internat. J. Algebra
  Comput. \textbf{12} (2002), no.~1, 223--246.

\bibitem[HPS92]{putn:bratel}
Richard~H. Herman, Ian~F. Putnam, and Christian~F. Skau, \emph{Ordered
  {B}ratteli diagrams, dimension groups, and topological dynamics},
  Intern.\~J.\ Math. \textbf{3} (1992), 827--864.

\bibitem[Ish08]{ishii:hyperbolicI}
Yutaka Ishii, \emph{Hyperbolic polynomial diffeomorphisms of {$\Bbb C^2$}. {I}.
  {A} non-planar map}, Adv. Math. \textbf{218} (2008), no.~2, 417--464.

\bibitem[Ish09]{ishii:hyperbolicII}
\bysame, \emph{Hyperbolic polynomial diffeomorphisms of {$\Bbb C^2$}. {II}.
  {H}ubbard trees}, Adv. Math. \textbf{220} (2009), no.~4, 985--1022.

\bibitem[Ish13]{ishii:img}
Yutaka Ishii, \emph{Hyperbolic polynomial diffeomorphisms of $\mathbb{C}^2$.
  iii: Iterated monodromy groups}, (preprint), 2013.

\bibitem[JM12]{juschenkomonod}
Kate Juschenko and Nicolas Monod, \emph{Cantor systems, piecewise translations
  and simple amenable groups}, (preprint, arXiv:1204.2132), 2012.

\bibitem[LN07]{lavnek:diagonal}
Y.~Lavrenyuk and V.~Nekrashevych, \emph{On classification of inductive limits
  of direct products of alternating groups}, Journal of the London Mathematical
  Society \textbf{75} (2007), no.~1, 146--162.

\bibitem[LP05]{leinenpuglisi:1type}
Felix Leinen and Orazio Puglisi, \emph{Some results concerning simple locally
  finite groups of 1-type}, Journal of Algebra \textbf{287} (2005), 32--51.

\bibitem[Mac87]{mackenzie:Liegroupoidsdiff}
K.~Mackenzie, \emph{Lie groupoids and {L}ie algebroids in differential
  geometry}, London Mathematical Society Lecture Note Series, vol. 124,
  Cambridge University Press, Cambridge, 1987.

\bibitem[Med06]{medynets:aperiodic}
Konstantin Medynets, \emph{Cantor aperiodic systems and {B}ratteli diagrams},
  C. R. Math. Acad. Sci. Paris \textbf{342} (2006), no.~1, 43--46.

\bibitem[Mil04]{milnor:dragons}
John Milnor, \emph{Pasting together {J}ulia sets: a worked out example of
  mating}, Experiment. Math. \textbf{13} (2004), no.~1, 55--92.

\bibitem[Nek02]{nek:ssinv}
Volodymyr Nekrashevych, \emph{Self-similar inverse semigroups and groupoids},
  Ukrainian Congress of Mathematicians: Functional Analysis, 2002,
  pp.~176--192.

\bibitem[Nek05]{nek:book}
\bysame, \emph{Self-similar groups}, Mathematical Surveys and Monographs, vol.
  117, Amer. Math. Soc., Providence, RI, 2005.

\bibitem[Nek06]{nek:smale}
\bysame, \emph{Self-similar inverse semigroups and {Smale} spaces},
  International Journal of Algebra and Computation \textbf{16} (2006), no.~5,
  849--874.

\bibitem[Nek07]{nek:ssfamilies}
\bysame, \emph{A minimal {Cantor} set in the space of 3-generated groups},
  Geometriae Dedicata \textbf{124} (2007), no.~2, 153--190.

\bibitem[Nek08]{nek:filling}
\bysame, \emph{Symbolic dynamics and self-similar groups}, Holomorphic dynamics
  and renormalization. {A volume in honour of John Milnor's 75th birthday}
  (Mikhail Lyubich and Michael Yampolsky, eds.), Fields Institute
  Communications, vol.~53, A.M.S., 2008, pp.~25--73.

\bibitem[Nek09]{nek:polynom}
\bysame, \emph{Combinatorics of polynomial iterations}, Complex Dynamics --
  Families and Friends (D.~Schleicher, ed.), A K Peters, 2009, pp.~169--214.

\bibitem[Nek10]{nek:free}
\bysame, \emph{Free subgroups in groups acting on rooted trees}, Groups,
  Geometry, and Dynamics \textbf{4} (2010), no.~4, 847--862.

\bibitem[Neu86]{pneumann:brex}
Peter~M. Neumann, \emph{Some questions of {Edjvet} and {Pride} about infinite
  groups}, Illinois J. Math. \textbf{30} (1986), no.~2, 301--316.

\bibitem[NW59]{nashwilliams}
C.~St. J.~A. Nash-Williams, \emph{Random walk and electric currents in
  networks}, Proc. Cambridge Philos. Soc. \textbf{55} (1959), 181--194.
  \MR{0124932 (23 \#A2239)}

\bibitem[Oli98]{oliva:phd}
Ricardo~A. Oliva, \emph{On the combinatorics of external rays in the dynamics
  of the complex {H\'enon} map}, {PhD} dissertation, Cornell University, 1998.


\bibitem[rose]{rose}
Rosenblatt, Joseph, \emph{A generalization of F\o lner's condition.}, Math. Scand. \textbf{33} (1973), 153--170.

\bibitem[Seg01]{segal:branch}
Dan Segal, \emph{The finite images of finitely generated groups}, Proc. London
  Math. Soc. (3) \textbf{82} (2001), no.~3, 597--613.

\bibitem[Sid00]{sid:cycl}
Said~N. Sidki, \emph{Automorphisms of one-rooted trees: growth, circuit
  structure and acyclicity}, J. of Mathematical Sciences (New York)
  \textbf{100} (2000), no.~1, 1925--1943.

\bibitem[Sid04]{sidki:polynonfree}
Said~N. Sidki, \emph{Finite automata of polynomial growth do not generate a
  free group}, Geom. Dedicata \textbf{108} (2004), 193--204.

\bibitem[Woe00]{woess:rw}
Wolfgang Woess, \emph{Random walks on infinite graphs and groups}, Cambridge
  Tracts in Mathematics, vol. 138, Cambridge University Press, 2000.

\bibitem[Wor11]{woryna:cyclic}
Adam Woryna, \emph{The rank and generating set for iterated wreath products of
  cyclic groups}, Comm. Algebra \textbf{39} (2011), no.~7, 2622--2631.

\bibitem[Zim84]{zimmer:ergodictheory}
Robert~J. Zimmer, \emph{Ergodic theory and semisimple groups}, Monographs in
  Mathematics, vol.~81, Birkh\"auser Verlag, Basel, 1984. \MR{776417
  (86j:22014)}

\end{thebibliography}

\end{document}